\numberwithin{equation}{section}
\begin{document}

 \newtheorem{theorem}{Theorem}[section]
  \newtheorem{prop}[theorem]{Proposition}
  \newtheorem{cor}[theorem]{Corollary}
  \newtheorem{lemma}[theorem]{Lemma}
  \newtheorem{defn}[theorem]{Definition}
  \newtheorem{ex}[theorem]{Example}
    \newtheorem{conj}[theorem]{Conjecture}

 \newcommand{\cx}{{\bf C}}
\newcommand{\la}{\langle}
\newcommand{\ra}{\rangle}
\newcommand{\res}{{\rm Res}}
\newcommand{\expp}{{\rm exp}}
\newcommand{\Sp}{{\rm Sp}}
\newcommand{\lgp}{\widehat{\rm G}( { F} ((t)) )}
\newcommand{\svee}{\scriptsize \vee}
\newcommand{\deff}{\stackrel{\rm def}{=}}
\newcommand{\cal}{\mathcal}

\title{ LCA(2), Weil index, and Product Formula     }

\author[Dongwen Liu]{Dongwen Liu}
\address{School of Mathematical Science, Zhejiang University, Hangzhou 310027, Zhejiang, P.R. China}
\email{maliu@zju.edu.cn}

\author[Yongchang Zhu]{Yongchang Zhu}
\address{Department of Mathematics, The Hong Kong University of Science and Technology, Clear Water Bay, Kowloon, Hong Kong}
\email{mazhu@ust.hk}

\subjclass[2010]{Primary 43A25; Secondary 11R56, 42B35, 43A95}

\thanks{The second author is supported by Hong Kong RGC grant  16307214 and RGC grant 16305715}

\maketitle

\begin{abstract}
In this paper we study the category LCA(2) of certain non-locally compact abelian topological groups, and extend the notion of Weil index. As applications we deduce some product formulas for curves over local fields and arithmetic surfaces.
\end{abstract}

\section{ Introduction. }

The adeles of global fields are used in the formulation of class field theory and play a fundamental role in the
 study of automorphic forms.  The higher dimensional adeles were introduced by A. Parshin for surfaces \cite{P1} and were generalized later by A. Beilinson for schemes of higher dimensions \cite{B}.
 In the case that  $X$ is a surface over a finite field, its adele ring ${\Bbb A}_X $, as observed in \cite{OP1},
  is related to locally compact abelian groups (LCA for short) in the following way:  ${\Bbb A}_X $ is filtered by additive subgroups ${\Bbb A}(D)$ indexed by divisors $D$ of $X$,
  and for a pair  of divisors $D_1, D_2$ with $D_1 < D_2$,  ${\Bbb A} ( D_1 ) \subset {\Bbb A}  (D_2)$ and the quotient
   $ {\Bbb A} ( D_2 ) / {\Bbb A} ( D_1 )$ has a structure of  locally compact abelian group, as it is isomorphic to the adeles of some coherent sheaf in a one dimensional subscheme supported on $ | D_2 - D_1|$.
   The structure of such a system $( {\Bbb A}_X , \{ {\Bbb A}_D \} )$ was axiomatized by D. Osipov and A. Parshin in \cite{OP1, OP2}.
   The harmonic analysis for such spaces was also systematically developed in [{\it op. cit.}] where some ideas in \cite{K, K1} were used. Such a theory of harmonic analysis eventually leads to
   an analytic proof of Riemann-Roch theorem for algebraic surfaces \cite{OP3}. See also \cite{P2} for an excellent survey on this subject.

The present work will  generalize the results of  A. Weil  about locally compact abelian groups in \cite{W} to analytic objects in the category $C_2^{ar}$ used in \cite{OP2}. This category $C_2^{ar}$ is based on the categories $C_n$ introduced by
D. Osipov in \cite{O}, whose idea was to use filtered spaces and not to consider completions as in Tate (or
locally linearly compact) spaces. Thus
 an object in $ C_2^{ar} $ is roughly speaking a family of filtered abelian groups $\{ F(i), i\in I\}$ such that for any $ i , j \in I$, $i<j$,  $ F( i ) $ is embedded in $F ( j )$
 and the quotient $ F( j ) / F ( i)$ is endowed with a structure of locally compact abelian group.  An important list of examples are:
 adeles of a curve over a local field,  adeles of an algebraic surface over a finite field or an arithmetic surface, and some local versions in these examples.
 For simplicity,  we state the results in \cite{W} for the LCA group $G = F^n$, where $F$ is a local field (see Section \ref{s3} for the general case).
  Let $ q : G \to F$ be a non-degenerate quadratic form.  Then its composition with a non-trivial additive character $ \psi : F \to {\Bbb T}$
 gives a non-degenerate quadratic character $ h ( x ) = \psi ( q ( x ) ) $ of $G$, where ${\Bbb T}\subset \mathbb{C}^\times$ is the unit circle.   It was proved in \cite{W} that the Fourier transform of $ \psi ( q ( x ) ) $, as a tempered distribution on $G$,
  is of the form
  \[    {\cal F}  h ( x )  =   \gamma ( h )  \cdot  C \cdot     h' ( x ^* ) \]
  where  $\gamma ( h )\in{\Bbb T}$,    $ C>0$  and $ h' ( x^*)$ is a certain quadratic character of $G^*$ (see  Theorem \ref{windex} for more details).  The complex number  $ \gamma ( h)$ is nowadays called the Weil index of $h$.
  The same quadratic character $ h$ gives rise to an $SL_2 ( {\Bbb Z})$-action on the Heisenberg group associated with $G$ (see Theorem \ref{thm2.2.1}). It induces a projective representation
   of $SL_2 ( {\Bbb Z})$ on the Bruhat-Schwartz space ${\cal S} ( G)$, and the Weil index $\gamma ( h )$ appears in the central multipliers of the projective representation (see (\ref{2.3rel})).
  Now let $F$ be a global field, $q : F^n \to F$ be a non-degenerate quadratic form,  and $ \psi =\prod_v \psi_v $ be a non-trivial additive character of the adele ring ${\Bbb A}$ of $F$, which is trivial on $F$. For each place $v$,
   we have the Weil index $ \gamma ( h_v ) $ for the local quadratic character $ h_v ( x_v )  =  \psi_v ( q ( x_v ) ) $, and Weil's product formula (Theorem \ref{thm2.7}) states that
   \[   \prod_v \gamma ( h_v  )  = 1 . \]
   As is well known, Weil index and product formula are very important notions in representation theory and number theory, and these notions have made deep influences and wide applications on many subjects.

  In this work, we first introduce a category called LCA(2), whose objects are  abelian groups with compatible topology of two levels. Our definition emphasizes more on the
   analytic nature of these objects, and LCA(2) has more objects than $ C_2^{ar}$.  The results about this category are essentially due to Osipov and Parshin \cite{OP2}. With this being said, our approach makes improvements in certain aspects. For instance  the basis is no longer part of the datum of an object, which simplifies the theory significantly, and moreover our category LCA(2) has an exact category structure (Theorem \ref{exact}).
  The major new results of this work are generalizations of the results in \cite{W}  which we just recalled to the objects in the category LCA(2). Using the theory of Bruhat-Schwartz functions and Fourier transforms which follows \cite{OP2} closely, we are able to define the Weil index for a non-degenerate quadratic character of an LCA(2) object. As applications we deduce a few product formulas for curves over local fields, as well as arithmetic surfaces using a duality result in \cite{L}. The results on LCA(2) in this paper have been used in the theory of Weil representations and theta functionals on surfaces by the second author in \cite{Z}.

The content of this paper is as follows. In Section \ref{s3}, we recall the categorical properties of locally compact abelian groups and recall the results
in \cite{W} that we will generalize. In Section \ref{s4}, we introduce the category LCA(2), which is a modified version of the category $ C_2^{ar} $ first studied in \cite{OP2}. Our new definition
is ``base" free and is of more analytic nature.
   Though our formulation and that in \cite{OP2} are not logically equivalent,  all our results in Section \ref{s4} essentially appeared in \cite{OP2} already.
 In Section \ref{s5}, we introduce the notion of Weil index for a non-degenerate quadratic character on an LCA(2) group $G$. We prove that such a quadratic character gives rise to a projective representation
of $SL_2 ( {\Bbb Z} )$
  on the Bruhat-Schwarts space ${\cal S} ( G )$ of $G$ (Theorem \ref{thm4.5}).     In Section \ref{s6}, we prove the product formula of Weil index for curves over local fields and  local surfaces (Theorem \ref{5.2}, Theorem \ref{thm6.4}), and moreover
   we compute some concrete examples (Corollary \ref{cor5.3}, Corollary \ref{cor6.5}) explicitly.

\

\section{ Locally compact  Hausdorff topological abelian groups } \label{s3}

\subsection{Reminder of exact categories } \label{s2}

 In this subsection, we recall the notions related to exact categories following \cite{B1} and \cite{GM}.
An $Ab$-category is a category in which each hom-set $Mor ( B , C)$ is an additive abelian group and the composition
 of arrows is bilinear with respect to this addition.

Recall that  a biproduct diagram for two objects $A, B$ in an $Ab$-category  is a diagram
\[   A  { {  \stackrel { p_1}  { \longleftarrow }} \atop  { {\longrightarrow} \atop { i_1} } }   C  { {  \stackrel { p_2}  { \longrightarrow }}
 \atop  { {\longleftarrow} \atop { i_2} } }   B  \]
with arrows $p_1 , p_2 , i_1 , i_2 $ satisfying the identities
\[   p_1 i_1 = 1_A , \; \; \; p_2 i_2 = 1_B, \; \; \;  i_1 p_1 + i_2 p_2 = 1_C .\]
Then it is easy to see that
\[   A  {  \stackrel { p_1}  { \longleftarrow }}    C    \stackrel { p_2}  { \longrightarrow } B \]
is a product of $A$ and $B$, and
\[   A  {  \stackrel { i_1}  { \longrightarrow }}    C    \stackrel { i_2}  { \longleftarrow } B \]
is a coproduct of $A$ and $B$.

An  additive category is an $Ab$-category which has a zero object and a biproduct for each pair of its object.
A kernel-cokernel pair $(i, p )$ is a pair of morphisms
      \begin{displaymath}
\xymatrix{
  A_1  \ar[r]^{i}    &  A_2  \ar[r]^{p}
&  A_3  }
\end{displaymath}
such that $i$ is the kernel of $p$ and $p$ is the cokernel of $i$.  If a class $\Sigma $ of kernel-cokernel pairs is fixed, then an {\bf admissible monic}
 is a morphism $i$ such that there exists a morphism $p$ such that $(i, p )$ is in $\Sigma$, and similarly  an {\bf admissible epic}
  is a morphism $p$  such that there exists a morphism $i$ such that $(i, p )$ is in $\Sigma$.

\begin{defn} {\rm  An {\bf exact structure} on an additive category ${\cal C}$ is a class $\Sigma $ of kernel-cokernel pairs which is closed under isomorphisms and
   satisfies the following axioms:
\\
E1.  For all objects $ A \in  {\cal C}$, $1_A$ is an admissible monic and an admissible epic.
\\
 E2.  The class of admissible monics and the class of  admissible epics are closed under composition.
\\
E3.  The push out of an admissible monic along any morphism exists and yields an admissible monic.
\\
E4.   The pull back of an admissible epic along any morphism exists and yields an admissible epic.  }
\end{defn}

An additive category with an exact structure is called an {\bf exact category}.

\subsection{ LCA is an Exact category }

We denote by LCA the cateory of  locally compact Hausdorff topological abelian Groups.
For LCA groups $G_1$ and $G_2$,  i.e. objects in LCA, $Mor (  G_1 , G_2 )$ is the set of continuous group homomorphisms from $G_1$ to $G_2$. Clearly $Mor (  G_1 , G_2 )$
has an abelian group structure.  The biproduct for any two objects $G, H$ is the product group $G\times H$ with the obvious maps $i_1 , i_2, p_1 , p_2$.
So LCA is an additive category.  Any morphism $f: G \to H$ has a kernel  $ Ker ( f ) \to G $,  where $ Ker (f) = \{ a \in G \, | \, f ( a ) = 0 \}$
 is a subgroup of $G$ with the induced topology.    The cokernel of $f$  is
 $ H \to  H  /   \overline { Im( f ) } $ with the quotient topology, where $ \overline { Im( f ) }$ is the closure of the  $Im( f )$.

 Recall that in an additive category, if kernel and cokernel exist for any morphism, then
  pushout and pullback exist in this category. Therefore pushout and pullback exist  in   LCA.
 Consider a diagram in LCA:
         \begin{displaymath}
\xymatrix{
  G_1  \ar[r]^{i}    &  G_2  \ar[r]^{p}
&  G_3.}
\end{displaymath}
 It is a kernel-cokernel pair in LCA iff it is a short exact sequence of abelian groups, $i$ is an embedding of $G_1$ onto a closed subgroup $i (G_1)$ of $G_2$  and
  $p$ induces an isomorphism of the quotient group $G_2 / i ( G_1 ) $ onto $G_3$.
Let $ \Sigma $ be the class of all kernel-cokernel pairs in LCA.  It is clear that   $f:  G \to H$ is an admissible monic with respect to $\Sigma $
iff it is an closed embedding, and it is an admissible epic iff it induces an isomorphism
   $ G / Ker (f ) \to H $.

\begin{theorem}  The class of all kernel-cokernel pairs in LCA  is an exact structure.
\end{theorem}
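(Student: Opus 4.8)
The plan is to work throughout with the explicit descriptions recalled just above: an admissible monic is precisely a closed embedding, an admissible epic is precisely a continuous open surjection $p$ (equivalently, one inducing a topological isomorphism $G/\ker p \cong H$), and a kernel--cokernel pair is a topological short exact sequence. Closure of $\Sigma$ under isomorphism is immediate since being a kernel--cokernel pair is an isomorphism-invariant condition, and E1 is trivial because $(1_A, A \to 0)$ and $(0 \to A, 1_A)$ are kernel--cokernel pairs. For E2 this reduces to point-set topology: a composite of closed embeddings is a closed embedding, and a composite of continuous open surjections is a continuous open surjection, whence it induces an isomorphism on the quotient by its kernel. I will repeatedly use that once a morphism is known to be a closed embedding (resp. a quotient-inducing open surjection), it automatically sits in a kernel--cokernel pair via its cokernel (resp. kernel), so the ``admissible'' bookkeeping is free; the real content is E3 and E4.

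For E4, given an admissible epic $p : G_2 \to G_3$ and any $g : G_3' \to G_3$, I would realize the pullback as the closed subgroup $P = \{(x,y) \in G_2 \oplus G_3' : p(x) = g(y)\}$, which is the kernel of $(x,y) \mapsto p(x) - g(y)$, hence an LCA group and the categorical pullback. Let $\pi : P \to G_3'$ be the projection. Then $\pi$ is surjective because $p$ is, and $\ker \pi = \ker p \oplus 0$ is closed. The only real point is that $\pi$ is open, which is a one-line computation on basic open sets: for $V \subseteq G_2$ and $W \subseteq G_3'$ open,
\[ \pi\big( (V \times W) \cap P \big) = W \cap g^{-1}(p(V)), \]
which is open since $p$ is open and $g$ is continuous. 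Thus $\pi$ is a quotient-inducing open surjection, i.e. an admissible epic.

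E3 is the crux. Given an admissible monic $i : G_1 \to G_2$ and any $f : G_1 \to G_1'$, I form the pushout $Q = (G_2 \oplus G_1')/K$ with $K = \{(i(a), -f(a)) : a \in G_1\}$ and the induced map $j : G_1' \to Q$, $y \mapsto \overline{(0,y)}$. The first step is to show $K$ is closed, so that $Q$ is again an LCA group: if $(i(a_\alpha), -f(a_\alpha)) \to (x,z)$, then $i(a_\alpha) \to x \in i(G_1)$ by closedness of $i(G_1)$, so $x = i(a)$ and $a_\alpha \to a$ because $i$ is a homeomorphism onto its image; continuity of $f$ then forces $z = -f(a)$, so $(x,z) \in K$. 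The main obstacle is to prove that $j$ is a closed embedding. Writing $q : G_2 \oplus G_1' \to Q$ for the open quotient map, I would first compute $q^{-1}(j(G_1')) = i(G_1) \oplus G_1'$, which is closed and $q$-saturated; this already shows $j(G_1')$ is closed in $Q$, and, since the restriction of an open map to a saturated subset is open onto its image, it identifies the subspace topology on $j(G_1')$ with the quotient topology of $(i(G_1) \oplus G_1')/K$. Finally, the homomorphism $(i(a), y) \mapsto y + f(a)$ on $i(G_1)\oplus G_1'$ is continuous, kills exactly $K$, and hence descends to a continuous two-sided inverse of $j$; so $j$ is a homeomorphism onto the closed subgroup $j(G_1')$, i.e. an admissible monic.

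The genuinely delicate point is the topological identification in E3: one cannot simply project $Q$ back onto $G_1'$, since no such continuous retraction exists in general (reflecting the non-splitting of extensions), so the homeomorphism $G_1' \cong j(G_1')$ must be extracted only after restricting to the saturated subgroup $i(G_1)\oplus G_1'$. An alternative and perhaps cleaner route to E3 is to invoke Pontryagin duality, which is an exact contravariant self-equivalence of LCA interchanging closed embeddings with quotient-inducing open surjections and pushouts with pullbacks; E3 then follows formally from the E4 already established.
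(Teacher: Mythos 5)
Your argument is correct, and it diverges from the paper's own treatment at exactly the points where the details matter, so a comparison is worthwhile. The paper proves E3--E4 via Lemma \ref{lemma2.2}, and for E4 it uses the very same fibered product you do: $G_2'=\{(a,b)\in G_2\times B \mid g(a)=f(b)\}$ with the projection $g'\colon G_2'\to B$. But to show that $\bar g\colon G_2'/G_1\to B$ is a topological isomorphism, the paper argues that $g'$ is a \emph{closed} map, as the composite of the closed embedding $G_2'\hookrightarrow G_2\times B$ with the projection onto $B$, ``which is closed as well'' --- and that last assertion is false in general (the projection $\mathbb{R}^2\to\mathbb{R}$ is not closed); indeed $g'$ itself need not be closed: pulling back the admissible epic $g\colon\mathbb{R}\to\mathbb{R}/\mathbb{Z}$ along $f=\mathrm{id}$ gives $g'\cong g$, which maps the closed set $\{n+\frac1n \mid n\ge 2\}$ to a non-closed set. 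Your one-line identity $\pi\bigl((V\times W)\cap P\bigr)=W\cap g^{-1}(p(V))$ (in your notation, with $p$ the epic and $g$ the base-change morphism) substitutes \emph{openness} for closedness, which is the property that actually survives restriction to the fibered product, and an open continuous surjection does induce the required isomorphism $P/\ker\pi\cong G_3'$; this is the standard correct argument, and since the theorem is classical (cf.\ \cite{GM}) the paper's conclusion is unaffected, only its sketch. For E3 the paper says merely that ``(2) can be proved similarly,'' whereas you supply the genuine content: closedness of $K=\{(i(a),-f(a))\}$ --- which is precisely what makes $(G_2\oplus G_1')/K$ the honest cokernel and hence the pushout, since cokernels in LCA are quotients by the \emph{closure} of the image --- then the computation $q^{-1}(j(G_1'))=i(G_1)\oplus G_1'$ and the saturated-restriction argument identifying the subspace topology on $j(G_1')$ with the quotient topology of $(i(G_1)\oplus G_1')/K$, correctly isolating why no continuous retraction $Q\to G_1'$ is available. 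Your alternative derivation of E3 from E4 via Pontryagin duality is also sound, since duality is an exact contravariant self-equivalence exchanging closed embeddings with quotient-inducing open surjections and pullbacks with pushouts; it buys a shorter proof at the cost of importing duality theory, while your direct construction stays elementary and, unlike the paper's sketch, is airtight at every topological step.
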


This is a well-known result, see e.g., \cite{GM}. We sketch a proof.
It is clear that the axioms E1, E2 are satisfied,  and E3, E4 will be proved in the following lemma.

  \begin{lemma}\label{lemma2.2} Let
\begin{displaymath}
\xymatrix{
  G_1  \ar[r]^{i}    &  G_2  \ar[r]^{g}
&  G_3  }
\end{displaymath}
be an exact triple in LCA.
\newline (1) For any morphism $ f : B \to G_3$,  there exists  a diagram
\begin{displaymath}
\xymatrix{
  G_1  \ar[r]^{i'}  \ar[d]^=  &  G_2'  \ar[r]^{g'}  \ar[d]^{f'}
&  B  \ar[d]^f    \\
  G_1  \ar[r]^{i}  &  G_2  \ar[r]^{g}  &  G_3
}
\end{displaymath}
in LCA such that the right square is a pullback and the first row is also an exact triple.
\newline (2) For any morphism $ f : G_1 \to C$, there exists
 a diagram
\begin{displaymath}
\xymatrix{
  G_1  \ar[r]^{i}  \ar[d]^f  &  G_2  \ar[r]^{g}  \ar[d]^{f'}
&  G_3   \ar[d]^{=}    \\
  C  \ar[r]^{i'}  &  G_2'  \ar[r]^{g'}  &  G_3
}
\end{displaymath}
in LCA such that the left square is a pushout and the second row is also an exact triple.
\end{lemma}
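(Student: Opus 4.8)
The plan is to realize both constructions by the explicit formulas available in LCA and then verify that the new row is again an exact triple. Since LCA is additive with all kernels and cokernels, the pullback in (1) and the pushout in (2) exist; using the characterizations recalled just before the lemma, the real content is to check that the new monic is a closed embedding, that the new epic is an open surjection, and that the image of the monic equals the kernel of the epic.

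For part (1) I would set
\[ G_2' = \{(x,b) \in G_2 \times B : g(x) = f(b)\}, \]
which is the preimage of the diagonal of $G_3 \times G_3$ under $(x,b)\mapsto(g(x),f(b))$ and hence a closed subgroup of the LCA group $G_2 \times B$, so itself an object of LCA. I take $f',g'$ to be the two projections and set $i'(a) = (i(a),0)$; the right square then commutes and is the categorical pullback. Surjectivity of $g'$ is immediate from that of $g$, and $\ker g' = \{(x,0) : x \in \ker g\} = i(G_1)\times\{0\} = \operatorname{im} i'$. The key point, that $g'$ is \emph{open} and hence an admissible epic, follows directly from openness of $g$: for a basic neighbourhood $(V\times W)\cap G_2'$ of $0$, the set $W \cap f^{-1}(g(V))$ is a neighbourhood of $0$ (as $g(V)$ is open) contained in its image. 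Finally $i'$ is a closed embedding: its image $i(G_1)\times\{0\}=\ker g'$ is closed, it is injective since $f'\circ i'=i$ is, and its inverse on the image is $i^{-1}\circ f'$, which is continuous, so $i'$ is a homeomorphism onto its image.

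For part (2) I would form the pushout as the cokernel of $a \mapsto (f(a),-i(a))$, namely
\[ G_2' = (C\times G_2)/K, \qquad K = \{(f(a),-i(a)) : a \in G_1\}, \]
with $i'(c)=[(c,0)]$, $f'(x)=[(0,x)]$ and $g'[(c,x)]=g(x)$. The first thing to check is that $K$ is already closed, so that $G_2'$ is Hausdorff and LCA with no passage to a closure; this uses that $i$ is a closed embedding: if $(f(a_\alpha),-i(a_\alpha))$ converges then $i(a_\alpha)$ converges in the closed subgroup $i(G_1)$, so $a_\alpha$ converges via $i^{-1}$, and the limit lies in $K$. The quotient map $\pi$ is then open; since $g'\circ\pi = g\circ p_2$ is open and surjective, so is $g'$, and a direct computation gives $\ker g' = \operatorname{im} i'$.

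The main obstacle, and the only step requiring real care, is to show that $i':C \to G_2'$ is a \emph{closed embedding}. Its image equals $\ker g'$ and is thus closed, so what must be shown is that $i'$ is a homeomorphism onto its image. As $\pi$ is open, the sets $\pi(V\times W)$ form a neighbourhood base of $0$ in $G_2'$, and one computes
\[ \pi(V\times W)\cap \operatorname{im} i' = i'\bigl(V + f(i^{-1}(W))\bigr). \]
So I must, for each neighbourhood of $0$ in $C$, choose $V,W$ making $V + f(i^{-1}(W))$ small, and the device is again that $i$ is a closed embedding: given a neighbourhood $V_1$ of $0$ in $C$, the set $i(f^{-1}(V_1))$ is open in $i(G_1)$, hence equals $W\cap i(G_1)$ for some open $W \ni 0$ in $G_2$, whence $i^{-1}(W)=f^{-1}(V_1)$ and $f(i^{-1}(W))\subseteq V_1$. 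Picking $V,V_1$ with $V+V_1$ inside the target neighbourhood finishes the step. The universal properties of pullback and pushout are then routine, and axioms E4 and E3 follow respectively from part (1) and part (2), since admissible monics are exactly the closed embeddings and admissible epics exactly the open surjections.
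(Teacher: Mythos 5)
Your proposal is correct, and in part (1) the construction is identical to the paper's (the same fibered product $G_2'\subset G_2\times B$, the same projections and $i'(a)=(i(a),0)$, the same kernel and pullback checks). Where you diverge is the verification that $g'$ is an admissible epic: the paper argues that $\bar g: G_2'/G_1\to B$ is a closed map by factoring $g'$ as the closed embedding $G_2'\to G_2\times B$ followed by the projection to $B$, asserting that this projection is closed --- but projections of LCA groups are open, not closed in general (the hyperbola $xy=1$ in $\mathbb{R}^2$ projects onto $\mathbb{R}\setminus\{0\}$), so your direct openness computation $g'\bigl((V\times W)\cap G_2'\bigr)=W\cap f^{-1}(g(V))$, using that $g$ is open, is actually the sounder route and is exactly what is needed for the admissible-epic characterization recalled before the lemma. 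For part (2) the paper says only that it ``can be proved similarly'' and gives no details; you supply the full argument, and you correctly isolate the two points with real content that are \emph{not} mirror images of (1): first, that $K=\{(f(a),-i(a))\}$ is already closed (via $i$ being a homeomorphism onto its closed image), so the cokernel $(C\times G_2)/K$ involves no passage to a closure and is Hausdorff; second, that $i'$ is a homeomorphism onto its image, which you prove via the identity $\pi(V\times W)\cap \operatorname{im} i' = i'\bigl(V+f(i^{-1}(W))\bigr)$ together with the choice of $W$ satisfying $W\cap i(G_1)=i(f^{-1}(V_1))$, forcing $f(i^{-1}(W))\subseteq V_1$ --- this is the genuinely delicate step and your treatment of it is correct. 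The only (harmless) omission is the injectivity of $i'$ in (2), which is immediate: $i'(c)=0$ means $(c,0)=(f(a),-i(a))$ for some $a$, whence $a=0$ by injectivity of $i$ and then $c=0$. In sum, your proof buys more than the paper's: it repairs the closedness slip in (1) and makes explicit the content of (2) that the paper leaves implicit.
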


\begin{proof} We prove (1). Let $G_2' =\{ ( a , b)\in G_2\times B \; | \;  g( a ) - f ( b ) = 0 \} $, which is a closed subgroup of $G_2 \times B$.
 Let  $f' : G_2' \to G_2$ and
 $g' : G_2' \to B$ denote the obvious projections. The embedding $i' : G_1 \to G_2 ' $ is given by
$a_1 \mapsto (a_1 , 0 )$. It is clear that the diagram in (1) is  a commutative diagram in LCA.
 By the general construction of pullback in an additive category with kernels and cokernels, we see the right
 square is a pullback. To prove  that $i' $ is a closed embedding, we consider the diagram
 \begin{displaymath}
\xymatrix{
  G_1  \ar[r]^{i'}   &  G_2'  \ar[r]^h  & G_2 \times  B
}
\end{displaymath}
where $h : G_2'\to G_2 \times B$ is the kernel of $ g \oplus (-f): G_2 \times B \to G_3 $, hence a closed embedding.
 The composition $h \circ i'$ is a closed embedding, therefore $i' : G_1 \to G_2'$ is a closed embedding as well. It is clear that
  $i'$ is a kernel of $g'$.   It remains to prove that   $\bar g :  G_2' / G_1 \to  B$ induced from $g'$ is an isomorphism in LCA. Obviously
$\bar g $ is an isomorphism of abelian groups. Hence it is enough to prove that $\bar g$ is a closed map, i.e. a map which sends closed sets to closed sets.  Since
the topology of $G_2' / G_1$ is the quotient topology,   it reduces to prove that
 $ g' : G_2' \to B$ is closed.  But $g'$ is the composition  $ G_2' \to G_2\times B \to B$, where the first map is closed (since it is an closed embedding) and the second map is the projection which is closed as well.
This proves (1), and (2) can be proved similarly.  \end{proof}

\subsection{Heisenberg group and automorphisms }\label{subsection2.2}

 In this subsection and the next, we recall the main results in \cite{W} that we will generalize to LCA(2).
 We will define the Heisenberg group $A(G)$ associated to an LCA $G$ and prove that a non-degenerate quadratic character $h(x)$
  on $G$ such that $h( x ) = h( -x )$ gives rises to an action of $SL_2 ( {\Bbb Z} ) $ on $A(G)$ as automorphisms.

Let $\Bbb T$ be the multiplicative group formed by complex numbers of norm $1$. Let $G$ be a locally compact abelian group, and $G^*=\textrm{Hom}_{\rm cts}(G, \Bbb T)$ be the Pontryagin dual
which consists of all the continuous homomorphisms from $G$ into $\Bbb T$.
  We shall write the operations
 in $G$ and $G^*$ additively, and write the
 pairing between $G$ and $G^*$ as $( x , x^* )$ for $x\in G$ and $x^*\in G^*$.
    We denote by $F$ the bicharacter on $(G \times G^*) \times (G \times G^*)$  given by
\begin{equation}\label{2.2.1}  F( ( x_1 , x_1^* ) , (x_2 , x_2^* ) ) = ( x_1 , x_2^* ) .\end{equation}
   The Heisenberg group $A(G)$ associated to $G$ is the set
 \[  A(G) = G \times G^* \times \Bbb T \]
 with the group law given by
 \begin{equation}\label{2.2.2} (w_1 , t_1 ) ( w_2 , t_2 ) = ( w_1 + w_2 , F(w_1 , w_2 ) t_1 t_2 ) .\end{equation}
 The center of $A(G)$ consists of the elements $(0,t)$, $t\in \Bbb T$, which can be identified with $\Bbb T$,
 and we can identify  the quotient group $A(G)/ \Bbb T$ with $G\times G^*$.

As in \cite{W}, let the automorphism
  group of $G \times G^*$ act on $ G \times G^*$ from the right. We often write an automorphism $\sigma $ as a $2\times 2$ matrix
 \begin{equation}\label{2.2.3}
      \sigma = \left( \begin{matrix}  \alpha & \beta \\ \gamma &\delta \end{matrix} \right),
 \end{equation}
which acts on $G\times G^*$ by
\[   (x , x^* )\sigma =  ( x, x^*  )\left( \begin{matrix}  \alpha & \beta  \\ \gamma &\delta  \end{matrix} \right)
     =  ( x \alpha + x^* \gamma ,  x \beta + x^* \delta ), \]
  where $\alpha , \beta , \gamma , \delta $ are homomorphisms from $G$ to $G$, $G$ to $G^*$, $G^*$ to $G$, and $G^*$ to $G^*$ respectively.
 We have a skew symmetric bicharacter  on  $G \times G^*$ given by
  \begin{eqnarray*}    \la  ( x_1 , x_1^* ) , (x_2 , x_2^* ) \ra   &=&  ( x_1 , x_2^* ) ( x_2 , x_1^*)^{-1}  \nonumber \\
&=& F( (x_1 , x_1^* ) , (x_2 , x_2^* )  ) F( (x_2 , x_2^* ) , (x_1 , x_1^* )  )^{-1}.  \end{eqnarray*}
   We call
an automorphism $\sigma$ of $G\times G^*$ a symplectic automorphism if
  it preserves the above  bicharacter, i.e.,
\[  F(w_1 \sigma , w_2 \sigma ) F(w_2\sigma , w_1 \sigma )^{-1} =  F(w_1 , w_2 ) F( w_2 , w_1 )^{-1},\]
  which is equivalent to
\begin{equation}\label{2.2.4}
    \alpha \delta^* - \beta \gamma^* = {1}_{G}, \; \; \alpha \beta^* = \beta \alpha^* , \; \; \gamma \delta^* = \delta \gamma^* .
\end{equation}
Denote by $Sp( G)$ the group of symplectic automorphisms of $G\times G^*$, and $B_0 ( G)$ the group of automorphisms of $A(G)$ that induce the identity map on the center $\Bbb T$.
  It is immediate that an automorphism $s$ of $A(G)$ can be expressed as
 \[ ( w , t ) s = ( w \sigma , f ( w ) t ) \]
 where $\sigma $ is an automorphism of $G\times G^*$ and $f: G\times G^*  \to  \Bbb T $ is a map.
 One checks that $( \sigma , f )$ is an automorphism iff
 \begin{equation}\label{2.2.5}
  f(w_1 + w_2 ) f(w_1 )^{-1} f(w_2 )^{-1} = F( w_1 \sigma , w_2 \sigma ) F(w_1 , w_2)^{-1} .\end{equation}
The group law of $B_0 ( G)$ is given by
 \[ ( \sigma ,f ) ( \sigma' , f' ) = ( \sigma \sigma' , f'')\]
where \[ f'' ( w ) := f(w) f' ( w \sigma ) .\]
Notice that the left hand side of (\ref{2.2.5}) is symmetric in $w_1 $ and $w_2$, so we have
 \[  F( w_1 \sigma , w_2 \sigma ) F(w_1 , w_2)^{-1} =  F( w_2 \sigma , w_1 \sigma ) F(w_2 , w_1)^{-1}. \]
 This implies that  $\sigma\in Sp( G )$,
  so we have a group homomorphism
\[   B_0 ( G ) \to Sp(G), \; \; \;   ( \sigma , f ) \mapsto \sigma ,\]
whose kernel is formed by elements of the form $( 1 , f )$. It is easy to see that
 \[ f ( u , u ^* ) = ( u , a^* ) ( a , u^* ) \]
for some $a\in G , a^* \in G^*$. Such an automorphism is exactly the inner automorphism given by
 $(a , a^*, 1 ) \in A(G)$.

If $x \mapsto 2x $ is an automorphism of $G$, we denote by $x \mapsto \frac 12 x $ the inverse automorphism.
In this case $B_0 ( G ) = Sp( G) \ltimes ( G \times G^* ) $. To prove this,  we use a different definition of the Heisenberg group $A( G )$.
 On $A(G) = G\times G^* \times \Bbb T $, we define a new product $*$ by
\[  ( w_1 , t _1 ) * (w_2 , t_2 ) =  ( w_1 + w_2 ,  F(\frac 12 w_1 , w_2 ) F(\frac 12 w_2 , w_1 )^{-1} t_1 t_2 ) .\]
It is easy to see that the map
 \begin{equation}\label{c2}  ( w , t ) \mapsto ( w ,  F(\frac 12  w , w )^{-1} t )   \end{equation}
is an isomorphism from the Heisenberg group $A(G)$ onto $(A(G), * )$.
   For each
 $  \sigma \in Sp(G)$,
\[   ( w , t ) \mapsto  (  w \sigma , t ) \]
 gives an automorphism of $(A( G ), *)$.
     Pulling back this automophism to $A(G)$ using (\ref{c2}), we  get
an automorphism of $A (G)$:
\[    ( w , t ) \mapsto ( w \sigma , F( \frac 12 w \sigma , w \sigma ) F( \frac 12 w , w )^{-1} t ). \]
 Therefore we obtain a lifting $  Sp( G) \to  B_0 ( G) $, $\sigma \mapsto (\sigma , F( \frac 12 w \sigma , w \sigma ) F( \frac 12 w , w )^{-1} )$.

We list some  special elements in  $B_0 ( G ) $. All the notations we use follow the original paper \cite{W}.
     If $\alpha : G \to G $ is an automorphism, then
   \begin{equation}\label{O1}
         d_0 ( \alpha ) := \left(  \left( \begin{matrix} \alpha  & 0 \\ 0 & \alpha^{*-1} \end{matrix} \right) , 1 \right) \end{equation}
is in $B_0 (G) $.
 If $\gamma : G^* \to G $ is an isomorphism of $G^*$ onto $G$, then
  \begin{equation}\label{O2} d_0' ( \gamma ) :=
 \left(  \left( \begin{matrix} 0  &  -\gamma^{*-1} \\ \gamma & 0  \end{matrix} \right) , f \right),
 \; \; \; {\rm where } \; \; \; f( x , x ^* ) = ( x , -x^* ) .\end{equation}
 If $h: G \to \Bbb T$ is a quadratic character, i.e.,  $h ( x_1 +x_2 ) h( x_1 )^{-1} h( x_2)^{-1}$ is a bicharacter on $G\times G$,
 let  $ \rho :  G \to G^*$ be given by
 \[  h ( x_1 +x_2 ) h( x_1 )^{-1} h( x_2)^{-1} = ( x_1 , x_2 \rho ). \]
Note that $\rho$ is symmetric, i.e., $\rho=\rho^*$. Then
 \begin{equation}\label{O3}  t_0 ( h ) :=  \left(  \left( \begin{matrix} 1  &  \rho  \\ 0  & 1  \end{matrix} \right) , h (x) \right)
 \end{equation}
is in $B_0 ( G)$. The map $h\mapsto t_0 ( h)$ is an embedding of the group $X_2 ( G)$ of
 quadratic characters of $G$ into $B_0 ( G ) $.
Similarly if $h': G^* \to \Bbb T$ is a quadratic character, then
  \begin{equation}\label{O4}  t'_0 ( h' ) :=  \left(  \left( \begin{matrix} 1  &  0  \\ \rho'  & 1  \end{matrix} \right) , h' (x^* ) \right)
   \end{equation}
and $h'\mapsto t_0' ( h')$ is an embedding of the group $X_2 ( G^* )$ of
 quadratic characters of $G^*$ into $B_0 ( G ) $.

\begin{theorem}\label{thm2.2.1}
 Let $h(x)$ be a quadratic character of $G$  satisfying $ h ( x ) = h( -x ) $,  so that \[
 h(x_1 + x_2 ) h (x_1 )^{-1} h( x_2 )^{-1} = ( x_1 , x_2 \rho ) \]
  for some homomorphism $\rho : G \to G^*$. Assume that $h$ is non-degenerate, i.e., $\rho$ is an isomorphism.
   Then there is a group homomorphism
 $SL_{2} ( {\Bbb Z} ) \to B_0 ( G ) $ given by
   \[   \left( \begin{matrix}   1 & 1 \\ 0 & 1 \end{matrix} \right) \mapsto t_0 ( h )  ,\quad  \left( \begin{matrix}   0 & -1 \\ 1 & 0 \end{matrix} \right) \mapsto d_0' ( \rho^{-1} ). \]
\end{theorem}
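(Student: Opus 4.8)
The plan is to invoke the standard amalgamated free product decomposition
\[ SL_2(\mathbb{Z}) \cong \mathbb{Z}/4\mathbb{Z} *_{\mathbb{Z}/2\mathbb{Z}} \mathbb{Z}/6\mathbb{Z}, \]
in which the free factors are generated by $S = \left(\begin{smallmatrix} 0 & -1 \\ 1 & 0\end{smallmatrix}\right)$ and $ST = \left(\begin{smallmatrix} 0 & -1 \\ 1 & 1\end{smallmatrix}\right)$ (with $T = \left(\begin{smallmatrix} 1 & 1 \\ 0 & 1\end{smallmatrix}\right)$), the amalgamated $\mathbb{Z}/2\mathbb{Z}$ being generated by $S^2 = (ST)^3 = -I$. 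By the universal property of the pushout, to build the homomorphism $\phi$ with $\phi(T) = t_0(h)$ and $\phi(S) = d_0'(\rho^{-1})$ it suffices to verify
\[ \phi(S)^4 = 1,\qquad (\phi(S)\phi(T))^6 = 1,\qquad \phi(S)^2 = (\phi(S)\phi(T))^3 \]
in $B_0(G)$. Since the projection $B_0(G) \to Sp(G)$ sends these to identities whose matrix parts reproduce the scalar computation in $SL_2(\mathbb{Z})$ verbatim (the factors of $\rho$ recombine via $\rho^{-1}\rho = 1$ exactly as $1$'s do), the matrix parts hold automatically, and the entire content lies in matching the $\mathbb{T}$-valued cocycles $f$ on the two sides.

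First I would record the two identities that drive every cancellation. From $h(-x) = h(x)$ and $h(x_1 + x_2)h(x_1)^{-1}h(x_2)^{-1} = (x_1, x_2\rho)$, setting $x_2 = -x_1$ yields $(y, y\rho) = h(y)^2$; and since $\rho = \rho^*$ the form $(x_1, x_2\rho)$ is symmetric, so the substitution $y = x^*\rho^{-1}$ gives $(x^*\rho^{-1}, x\rho) = (x, x^*)$. Using $\rho = \rho^*$ I would also rewrite $d_0'(\rho^{-1})$ as the element with matrix $\left(\begin{smallmatrix} 0 & -\rho \\ \rho^{-1} & 0\end{smallmatrix}\right)$ and cocycle $f(x, x^*) = (x, -x^*)$.

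Next I would compute $\phi(S)^2$ from the group law $(\sigma, f)(\sigma', f') = (\sigma\sigma', f'')$, $f''(w) = f(w)\,f'(w\sigma)$. The matrix part squares to $-I$, while the cocycle part works out to $(x, -x^*)\cdot(x^*\rho^{-1}, x\rho)$, which collapses to $1$ by the symmetry identity above; hence $\phi(S)^2 = d_0(-1_G)$ with trivial cocycle, and squaring once more gives $\phi(S)^4 = 1$. The middle relation $(\phi(S)\phi(T))^6 = 1$ is then automatic once the third is known, so only $\phi(S)^2 = (\phi(S)\phi(T))^3$ remains.

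The main obstacle is this last identity. Writing $\phi(S)\phi(T) = (\tau, k)$ with $\tau = \left(\begin{smallmatrix} 0 & -\rho \\ \rho^{-1} & 1\end{smallmatrix}\right)$ and $k(x, x^*) = (x, -x^*)\,h(x^*\rho^{-1})$, one has $\tau^3 = -I$ at the matrix level, so the claim reduces to the vanishing of the iterated cocycle
\[ K(w) = k(w)\,k(w\tau)\,k(w\tau^2). \]
I would compute $w\tau = (x^*\rho^{-1},\, x^* - x\rho)$ and $w\tau^2 = (x^*\rho^{-1} - x,\, -x\rho)$ explicitly (the check that $w\tau^3 = -w$ confirms $\tau^3 = -I$), substitute into $k$, and expand. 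Splitting $K(w)$ into its pure-pairing part and its $h$-part and applying bilinearity of $(\cdot,\cdot)$, the quadratic relation for $h$, and the two identities $(y, y\rho) = h(y)^2$ (at $y = x$ and $y = x^*\rho^{-1}$) and $(x^*\rho^{-1}, x\rho) = (x, x^*)$, all factors $h(x^*\rho^{-1})^{\pm 2}$, $h(x)^{\pm 2}$ and $(x, x^*)^{\pm 1}$ pair off, leaving $K \equiv 1$. Thus $(\phi(S)\phi(T))^3 = (\tau^3, K) = (-I, 1) = d_0(-1_G) = \phi(S)^2$. The delicate point is the bookkeeping of this triple product so that every factor cancels; the extension of $\phi$ through the pushout is then purely formal.
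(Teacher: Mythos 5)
Your proposal is correct and takes essentially the same route as the paper: the amalgam $\mathbb{Z}/4\mathbb{Z} *_{\mathbb{Z}/2\mathbb{Z}} \mathbb{Z}/6\mathbb{Z}$ encodes exactly the presentation $s^4=1$, $(ts)^3=s^2$ used there, and your relation checks --- the cocycle of $d_0'(\rho^{-1})^2$ collapsing via $(x^*\rho^{-1},x\rho)=(x,x^*)$, and the triple cocycle $k(w)\,k(w\tau)\,k(w\tau^2)$ cancelling via $(y,y\rho)=h(y)^2$ and $\rho=\rho^*$ --- reproduce the paper's computation, your $st$-ordering versus the paper's $ts$ being harmless since both products are shown directly to equal $(-I,1)$, which commutes with $t_0(h)$ because $h$ is even. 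The only packaging difference is that you accumulate the cocycle of $(ST)^3$ in one iterated product instead of the paper's two three-fold products (\ref{2c1})--(\ref{2c2}); the mathematical content is identical.
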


 This theorem was  implicitly contained in \cite{W}.

\begin{proof} Recall that the modular group $SL_{2} ( {\Bbb Z} )$ is isomorphic to the group with generators $s ,t $ and relations
 \begin{equation}\label{sl2}   s^4 = 1 , \; \; \; \; \;  (ts)^3 = s^2 . \end{equation}
 The isomorphism is given by
\[   s \mapsto  \left( \begin{matrix}   0 & -1 \\ 1 & 0 \end{matrix} \right),  \; \; t \mapsto  \left( \begin{matrix}   1 & 1 \\ 0 & 1 \end{matrix} \right) . \]
 Hence it is enough to prove that
  \begin{equation}\label{rel}      d_0' ( \rho^{-1} )^4 =1,   \;  \;  \; (  t_0 ( h )  d_0' ( \rho^{-1} ))^3 =   d_0' ( \rho^{-1} )^2.
                   \end{equation}
We have
  \begin{eqnarray*}
   d_0' ( \rho^{-1} )^2 &=&   \left(  \left( \begin{matrix} 0 &  -\rho  \\ \rho^{-1}  & 0  \end{matrix} \right) ,  ( x , -x^* ) \right)
                              \left(  \left( \begin{matrix} 0 &  -\rho  \\ \rho^{-1}  & 0  \end{matrix} \right) ,  ( x , -x^* ) \right) \\
                        &=& \left(  \left( \begin{matrix} -1 &  0  \\ 0  &  -1  \end{matrix} \right) , (x, -x^* ) ( x^* \rho , x \rho^{-1} )   \right)    \\
                        &=&    \left(  \left( \begin{matrix} -1 &  0  \\ 0  &  -1  \end{matrix} \right) , 1   \right),
                         \end{eqnarray*}
which implies that $  d_0' ( \rho^{-1} )^4 =1$.

  To prove the second relation in (\ref{rel}), we compute that
     \begin{align}\label{2c1}
   &  t_0 ( h ) d_0' ( \rho^{-1} ) t_0 ( h ) \\
    = &  \left(  \left( \begin{matrix} 1  &  \rho  \\ 0  & 1  \end{matrix} \right) , h (x) \right)
       \left(  \left( \begin{matrix} 0 &  -\rho  \\ \rho^{-1}  & 0  \end{matrix} \right) ,  ( x , -x^* ) \right)
        \left(  \left( \begin{matrix} 1  &  \rho  \\ 0  & 1  \end{matrix} \right) , h (x) \right)        \nonumber \\
    = &    \left(  \left( \begin{matrix} 1  &  \rho  \\ 0  & 1  \end{matrix} \right) , h (x) \right)
           \left(  \left( \begin{matrix} 0 &  -\rho  \\ \rho^{-1}  & 1  \end{matrix} \right) ,  ( x , -x^* ) h ( x^* \rho^{-1} ) \right)  \nonumber \\
   = &    \left(  \left( \begin{matrix} 1  &  0  \\ \rho^{-1}   & 1  \end{matrix} \right) , h (x)  (-x , x\rho + x^* ) h ( x +x^* \rho^{-1} ) \right) \nonumber \\
     = &    \left(  \left( \begin{matrix} 1  &  0  \\ \rho^{-1}   & 1  \end{matrix} \right) , h (x)^2 h(x^*\rho^{-1} ) (-x , x\rho  ) \right), \nonumber
     \end{align}
     \begin{align}
 \label{2c2}
   &  d_0' ( \rho^{-1} ) t_0 ( h )  d_0' ( \rho^{-1} ) \\
  =&   \left(  \left( \begin{matrix} 0 &  -\rho  \\ \rho^{-1}  & 0  \end{matrix} \right) ,  ( x , -x^* ) \right)
        \left(  \left( \begin{matrix} 1  &  \rho  \\ 0  & 1  \end{matrix} \right) , h (x) \right)
          \left(  \left( \begin{matrix} 0 &  -\rho  \\ \rho^{-1}  & 0  \end{matrix} \right) ,  ( x , -x^* ) \right)      \nonumber \\
    = &    \left(  \left( \begin{matrix} 0  &  -\rho  \\ \rho^{-1}  & 1  \end{matrix} \right) , (-x , x^* ) h (x^* \rho^{-1} ) \right)
          \left(  \left( \begin{matrix}  0  &  -\rho  \\ \rho^{-1}  & 0  \end{matrix} \right) ,  ( x , -x^* ) \right)  \nonumber \\
   = &    \left(  \left( \begin{matrix}  -1  &  0  \\ \rho^{-1}   & -1  \end{matrix} \right) , (-x , x^* ) h (x^* \rho^{-1} ) ( -x^* \rho^{-1},  -x \rho +x^*  ) \right) \nonumber \\
     = &    \left(  \left( \begin{matrix}  -1  &  0  \\ \rho^{-1}   & -1  \end{matrix} \right) ,  h (x^* \rho^{-1} ) ( -x^* \rho^{-1},  x^*  ) \right). \nonumber
   \end{align}
By (\ref{2c1}) and (\ref{2c2}), we have
  \begin{align*}
         &(  t_0 ( h )  d_0' ( \rho^{-1} ))^3  \\
         =&  \left(  \left( \begin{matrix} 1  &  0  \\ \rho^{-1}   & 1  \end{matrix} \right) , h (x)^2 h(x^*\rho^{-1} ) (-x , x\rho  ) \right)
        \left(  \left( \begin{matrix}  -1  &  0  \\ \rho^{-1}   & -1  \end{matrix} \right) ,  h (x^* \rho^{-1} ) ( -x^* \rho^{-1},  x^*  ) \right)       \\
=&  \left(  \left( \begin{matrix}  -1  &  0  \\ 0   &  -1  \end{matrix} \right) , h (x)^2 h(x^*\rho^{-1} ) (-x , x\rho  )  h(x^*\rho^{-1} ) ( -x^*\rho^{-1} ,x ^* )  \right)   \\
 =&  \left(  \left( \begin{matrix}  -1  &  0  \\ 0   &  -1  \end{matrix} \right) , \frac { h ( x) } { h ( -x) } \frac { h ( x^{-1} \rho^{-1}) } { h ( - x^{-1} \rho^{-1}) }  \right)   \\
 =&  \left(  \left( \begin{matrix}  -1  &  0  \\ 0   &  -1  \end{matrix} \right) , 1  \right)  \nonumber \\
=&   d_0' ( \rho^{-1} )^2 .
 \end{align*}
\end{proof}

Under the conditions of Theorem \ref{thm2.2.1}, we have a non-degenerate quadratic character $h'$ of $G^*$ defined by
\[
h'(x^*)=h(x^*\rho^{-1})^{-1}.
\]
It is straightforward to check that
\[  t'_0 ( h ' ) =  t_0 ( h )^{-1}    d_0' ( \rho^{-1} )^{-1}   t_0 ( h )^{-1} . \]
Hence the homomorphism in Theorem \ref{thm2.2.1} maps
 \[   \left( \begin{matrix}   1 & 0 \\ -1 & 1 \end{matrix} \right) \mapsto t'_0 ( h ' )  .\]

\

If $G$ is the $n$-dimensional vector space $F^n$ over a local field $F$, then $G^*$ is isomorphic to $G$ itself.
A non-degenerate quadratic form $ q : G \to F$ gives rise to a family of quadratic characters
$ h_c (  x ) := \psi ( q ( c x ) ) $ parametrized by $c \in F$.  Then
 \[   \left( \begin{matrix}   1 & c \\ 0 & 1 \end{matrix} \right) \mapsto t_0 ( h_c )  ,\quad  \left( \begin{matrix}   0 & -1 \\ 1 & 0 \end{matrix} \right) \mapsto d_0' ( \rho^{-1} ). \]
generate a homomorphism  $SL_2 ( F)\to B_0 ( G) $.

\subsection{Weil index of a non-degenerate quadratic character }\label{ss3.3}
Let $dx $ be a Haar measure on $G$ and $dx^*$ be the dual Haar measure on $G^*$.   The duality here means that
 the Fourier transform $F: L^2 ( G) \to L^2 ( G^*) $ given by
 \[   F f (x^*) = \int_G  f(x ) ( x , x^* ) d x   \]
 is unitary.
Let $h(x)$ be a non-degenerate quadratic character of $G$ as in Theorem \ref{thm2.2.1}. In this subsection we shall prove that
the Fourier transform of $h(x)$, as a tempered distribution on $G^*$, equals
   \[  \gamma ( h )   | \rho |^{- \frac 12 }  h ( x^* \rho^{-1} )^{-1}    \]
   for some  $\gamma ( h ) \in \Bbb T $ and $|\rho |$ the positive number defined by (\ref{2norm}) below.
    The number $\gamma ( h )$ is called the {\bf Weil index}  of $h$.  It is independent of the
  Haar measure $dx $ on $G$.

 The Hilbert space  $L^2 (G)$ is a unitary representations of the Heisenberg group $A(G)$ under the action
\[  \pi ( u , u^*, t ) f (x ) = t ( x , u^* ) f( x + u )   . \]
The space  ${\cal S} ( G )$, formed by the  Bruhat-Schwartz functions on $G$, is
  a dense subspace of  $L^2 (G)$ and is preserved by $A(G)$.  If we endow ${\cal S } ( G)$ with the usual locally convex topology,
 then the action map $ A(G) \times {\cal S} ( G ) \to {\cal S} ( G ) $ is continuous. Similarly
  $L^2 ( G^*)$ is a unitary representation of $A(G)$ under the action
\[    \pi ( u , u^*, t ) f (x^*  ) = t  (-u , x^* + u^* )   f( x^* + u^* ) .\]
The Fourier transform $F: L^2 ( G ) \to L^2 ( G^* ) $ is an isomorphism of unitary representations of $A(G)$, and  it maps  ${\cal S} ( G) $ to ${\cal S} ( G^* )$
    as an isomorphism of topological vector spaces.

 For the operators $ d_0 ( \alpha ) ),    d_0' ( \gamma )$ and  $ t_0 ( h) $ given by (\ref{O1}), (\ref{O2}) and (\ref{O3}) respectively,
    we define operators
   $\pi (  d_0 ( \alpha ) ) $, $ \pi ( t_0 ( h) ) $ and $ d_0' ( \gamma ) $ on ${\cal S} ( G ) $ by
\begin{align*}
&  \pi (  d_0 ( \alpha ) ) f ( x ) =  f ( x \alpha ) ,\\
& \pi ( t_0 ( h) ) f ( x ) = h ( x ) f ( x ) ,\\
&   \pi (  d_0' ( \gamma ) )   f ( x ) =          Ff  ( - x {\gamma^*}^{-1} ) .
\end{align*}
Let $ | \alpha |$ and $|\gamma | $ be the positive numbers defined by
\begin{equation}\label{2norm}
       \int_G  f ( x \alpha ) dx = \frac 1 {|\alpha | }  \int_G  f ( x  ) dx , \; \; \; \int_{G^*} f ( x^* \gamma )  d x^* =  \frac 1 { | \gamma |} \int_G f ( x ) d x.
\end{equation}
Then the operators $ |\alpha |^{1/2} \pi (  d_0 ( \alpha ) ) $, $\pi ( t_0 ( h ) )$ and  $| \gamma |^{-1/2}  \pi (  d_0' ( \gamma ) )$ preserve the $L^2$-norm and extend
 to unitary operators on $L^2 ( G ) $.

It is easy to check that if $ g$ is one of $ d_0 ( \alpha )$,  $t_0 ( h)$ or $ d_0' ( \gamma )$, then for any $a\in A(G)$,
 \begin{equation}\label{comm}    \pi (   a \cdot  g ) = \pi ( g)^{-1} \pi ( a ) \pi ( g ) .\end{equation}

Assume that $ h (x)$ is a non-degenerate quadratic character of $G$ and $ \rho : G \to G^* $ is as in Theorem \ref{thm2.2.1}.
The relations (\ref{rel}) and (\ref{comm}) imply that
 the operators
 \begin{equation}\label{scalars}
   \pi ( d_0' ( \rho^{-1} ))^4  ,      \quad  \left( \pi (  t_0 ( h ) )  \pi ( d_0' ( \rho^{-1} ))\right) ^3  \pi (  d_0' ( \rho^{-1} ))^{-2}
  \end{equation}
   commute with $\pi ( a )$ for all $ a \in A(G)$. Since $L^2 ( G)$ is an irreducible unitary representation of $A(G)$, the operators in (\ref{scalars})
  are scalars.
  Since $  |\rho |^{1/2 }  \pi ( d_0' ( \rho^{-1} )) $ and  $  \pi (  t_0 ( h ) )$ are unitary, we know the modulus of these scalars:
  \[   |  \pi ( d_0' ( \rho^{-1} ))^4 | =  |\rho |^{-2 }, \quad | \left( \pi (  t_0 ( h ) \pi ( d_0' ( \rho^{-1} ))\right)^3  \pi (  d_0' ( \rho^{-1} ))^{-2} | =  |\rho |^{-1/2 } .\]
  We compute that
 \begin{align}\label{S}
    & \pi ( d_0' ( \rho^{-1} ))^2 f ( x )  \\
    =& \int_{G}  Ff  ( -y \rho ) ( - x \rho , y ) dy   \nonumber \\
     =& |\rho |^{-1} \int_{G^*}    Ff  ( x^* ) ( - x \rho , -x^* \rho^{-1}  ) dx^*    \nonumber \\
      =& |\rho |^{-1} \int_{G^*}    Ff  ( x^* ) (  x , x^*) dx^*      \nonumber \\
     =& |\rho |^{-1} f( -x ),     \nonumber
 \end{align}
which implies that
\begin{equation} \label{4power}
\pi ( d_0' ( \rho^{-1} ))^4 = | \rho |^{-2}.
\end{equation}
We have
 \[ \left( \pi (  t_0 ( h ) )  \pi ( d_0' ( \rho^{-1} )) \right) ^3    = c  |\rho |^{-1/2 }  \pi (  d_0' ( \rho^{-1} ))^{2} \]
  for some $c\in \Bbb T$.  That is,
\begin{align} \label{2.2.10}
   & \pi (  t_0 ( h ) )  \pi ( d_0' ( \rho^{-1} ))  \pi (  t_0 ( h ) )    \\
=&  c  |\rho |^{-1/2 }   \pi (  d_0' ( \rho^{-1} ))^{2}    \pi ( d_0' ( \rho^{-1} ))^{-1}  \pi (  t_0 ( h ) )^{-1}   \pi ( d_0' ( \rho^{-1} ))^{-1} . \nonumber
    \end{align}
Note that
  $ \pi (  t_0 ( h ) )^{-1}  =  \pi  ( t_0 ( h^{-1}  ) ).$
  By (\ref{S}),
 \[   \pi ( d_0' ( \rho^{-1} ))^{-1}  f (x ) = |\rho|\pi(d_0'(\rho^{-1}))f(-x)=  |\rho |  \int_G f( y ) ( y , x\rho ) d y. \]
 Applying (\ref{2.2.10}) to $f(x)\in {\cal S} ( G ) $, we obtain
\begin{align*} \textrm{LHS of (\ref{2.2.10})}  &=  h ( x )  \int_G h( y )  f( y ) (- y , x\rho ) d y,  \\
\textrm{RHS of (\ref{2.2.10})} &= c |\rho |^{1/2} \int_G \left( ( t , - x\rho ) h(t)^{-1} \int_G  f( y ) ( y , t \rho ) dy \right) d t \\
    &= c |\rho |^{-1/2}\int_{G^*} \left( ( x^* , - x  ) h(x^*\rho^{-1} )^{-1} \int_G  f( y ) ( y , x^*  ) dy \right) d x^*.
  \end{align*}
Comparison of both sides evaluated at $x=0$ yields
\[  \int_G h ( y ) f ( y ) dy =  c |\rho |^{-1/2} \int_{G^*}  h(x^*\rho^{-1} )^{-1}   Ff  ( x^* ) d x^*. \]
 Replacing $ f$ by  $Fg $ where $g\in \cal S(G^*)$, we get
  \[  \int_G h ( y )  F g  ( y ) d y   =  c |\rho |^{-1/2} \int_{G^*}  h(x^*\rho^{-1} )^{-1}  g  ( - x^* ) d x^*
                                       = c |\rho |^{-1/2} \int_{G^*}  h(x^*\rho^{-1} )^{-1}  g  (  x^* ) d x^* .\]
 This holds for all $g\in {\cal S} ( G^* )$, so we have  the identity of distributions
\begin{equation}\label{weilindex}
 F h ( x^* ) =   c   |\rho |^{- 1/2}    h (x^* \rho^{-1} )^{-1}.
 \end{equation}
We call $c$ the Weil index of $h$ and denote it by $ \gamma ( h ) $. In summary, we have following (\cite[Theorem 2]{W}):

\begin{theorem}\label{windex} Let $h(x)$ be a non-degenerate quadratic character of $G$ as in Theorem \ref{thm2.2.1}. Then
 the Fourier transform of $h(x ) $, as a distribution on $G^*$, is given by
    (\ref{weilindex}).
\end{theorem}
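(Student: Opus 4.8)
The plan is to exploit the group homomorphism $SL_2(\mathbb{Z}) \to B_0(G)$ furnished by Theorem \ref{thm2.2.1} together with the irreducibility of the representation of the Heisenberg group $A(G)$ on $L^2(G)$. First I would observe that the defining relations (\ref{rel}) of the modular group, when combined with the intertwining property (\ref{comm}), force the two composite operators in (\ref{scalars}) to commute with $\pi(a)$ for every $a \in A(G)$; by Schur's lemma applied to the irreducible unitary representation $L^2(G)$, each of them must therefore be a scalar. This is the conceptual heart of the argument: it reduces the determination of the Fourier transform of $h$ to the determination of two unimodular constants.

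Next I would determine these scalars. Since $|\rho|^{1/2}\pi(d_0'(\rho^{-1}))$ and $\pi(t_0(h))$ are unitary, the moduli of both scalars are read off immediately from the normalization (\ref{2norm}). To fix the actual values I would compute $\pi(d_0'(\rho^{-1}))^2$ directly: unfolding its definition as an iterated Fourier transform and applying Fourier inversion together with (\ref{2norm}) gives $\pi(d_0'(\rho^{-1}))^2 f(x) = |\rho|^{-1} f(-x)$, and squaring yields (\ref{4power}). The second relation in (\ref{scalars}) then reads $(\pi(t_0(h))\pi(d_0'(\rho^{-1})))^3 = c\,|\rho|^{-1/2}\pi(d_0'(\rho^{-1}))^2$ for a single $c \in \mathbb{T}$, which is the number to be identified with the Weil index.

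The final step is to turn this operator identity into the distributional statement (\ref{weilindex}). I would rewrite the cubic relation in the symmetric form (\ref{2.2.10}), apply both sides to an arbitrary $f \in \mathcal{S}(G)$, and evaluate at $x=0$; using $\pi(t_0(h))^{-1} = \pi(t_0(h^{-1}))$ and the explicit expression for $\pi(d_0'(\rho^{-1}))^{-1}$ obtained from (\ref{S}), this collapses to $\int_G h(y)f(y)\,dy = c\,|\rho|^{-1/2}\int_{G^*} h(x^*\rho^{-1})^{-1}\,Ff(x^*)\,dx^*$. Substituting $f = Fg$ for $g \in \mathcal{S}(G^*)$ and changing variables $x^* \mapsto -x^*$ (using $h(-x)=h(x)$) then gives the pairing $\langle Fh, g\rangle = c\,|\rho|^{-1/2}\langle h(\cdot\,\rho^{-1})^{-1}, g\rangle$ for all test functions $g$, which is exactly (\ref{weilindex}).

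The main obstacle I anticipate is the analytic bookkeeping in this last step rather than any conceptual difficulty: one must interpret $h$ as a tempered distribution on $G$ (respectively on $G^*$), justify that the formal manipulations of the oscillatory integrals are legitimate when tested against Bruhat-Schwartz functions, and keep careful track of the Haar-measure normalizations and of the various sign conventions entering through $F$ and $\rho$. Once the representation-theoretic reduction to scalars is in place, the identification of $c$ is forced, so the crux is organizing the Fourier-analytic computation cleanly.
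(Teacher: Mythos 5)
Your proposal is correct and follows essentially the same route as the paper's own argument: the reduction to two scalars via Schur's lemma applied to the irreducible $A(G)$-action on $L^2(G)$, the explicit computation $\pi(d_0'(\rho^{-1}))^2 f(x) = |\rho|^{-1}f(-x)$ giving (\ref{4power}), the rewriting of the cubic relation as (\ref{2.2.10}), evaluation at $x=0$, and the substitution $f = Fg$ with $h(-x)=h(x)$ to obtain (\ref{weilindex}). The analytic bookkeeping you flag at the end is handled in the paper exactly as you anticipate, by testing against Bruhat--Schwartz functions, so there is no gap.
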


By (\ref{4power}) and
\begin{equation}\label{2.3rel}
   \left( \pi (  t_0 ( h ) )  \pi ( d_0' ( \rho^{-1} )) \right) ^3    = \gamma ( h)   |\rho |^{-1/2 }  \pi (  d_0' ( \rho^{-1} ))^{2} ,
\end{equation}
 $ \pi ( d_0' ( \rho^{-1} )) $ and $ \pi ( t_0 ( h) )  )$ satisfy the relation (\ref{sl2}) up to a scalar,  so they generate an action of
a central extension of $SL_2 ( {\Bbb Z} ) $ on $L^2 ( G ) $.

\begin{prop} \label{prop2.5}  Let $h$ be a non-degenerate quadratic character of $G$ satisfying $h( x ) = h( -x )$ and $\rho : G\to G^*$ be the associated isomorphism.
  If a closed subgroup $U \subset G$ satisfies the conditions that $ U^{\perp } \rho^{-1} \subset U$ and $h(U^\perp \rho^{-1})=1$,
  define an induced  quadratic character $\bar h $ of $ \bar U := U /   U^{\perp } \rho^{-1} $ by
  \[  \bar h ( \bar x ) = h ( x )   \]
  where $ x\in U  $ is any lift of $\bar x \in \bar U $. Then $ \bar h $ is well-defined
   and non-degenerate,  and $\gamma (  h ) = \gamma (\bar h ) $, i.e., the Weil index of $ h$ equals the Weil index of $\bar h$.
  \end{prop}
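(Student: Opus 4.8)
The plan is to prove the three assertions in order: well-definedness and non-degeneracy by direct manipulation of the bicharacter together with Pontryagin duality, and the equality of Weil indices by restricting the distributional Fourier transform of $h$ to the closed subgroup $V^\perp$ and identifying the result with the Fourier transform of $\bar h$. Throughout write $V := U^\perp\rho^{-1}$ and let $\beta(x,y) = (x, y\rho)$ be the symmetric bicharacter attached to $h$, so $h(x_1 + x_2) = h(x_1)h(x_2)\beta(x_1, x_2)$. First I record two duality facts. Trivially $V\rho = U^\perp$. Moreover, writing any $\xi \in G^*$ as $\xi = y\rho$ and using the symmetry $\rho = \rho^*$ in the form $(a, b\rho) = (b, a\rho)$, one gets $(w\rho^{-1}, y\rho) = (y, w)$ for $w \in U^\perp$; hence $\xi = y\rho \in V^\perp$ iff $y$ annihilates $U^\perp$, i.e. $y \in U$ by the double-annihilator theorem for the closed subgroup $U$. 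Thus $V^\perp = U\rho$. In particular $V$ is $\beta$-isotropic (for $v_1,v_2\in V$ one has $v_2\rho\in U^\perp$ and $v_1\in U$, so $\beta(v_1,v_2)=1$), $h|_V=1$, and $U$ is exactly the $\beta$-orthogonal $\{x : \beta(x,V)=1\}$ of $V$.

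For the first two claims: for $x \in U$ and $v \in V$ we have $\beta(x, v) = (x, v\rho) = 1$ (as $v\rho \in U^\perp$, $x \in U$) and $h(v) = 1$, hence $h(x + v) = h(x)$. So $h|_U$ factors through $\bar U = U/V$, $\bar h$ is well defined, and it inherits $\bar h(-\bar x) = \bar h(\bar x)$. Its associated homomorphism $\bar\rho$ is the map induced on $\bar U$ by $x \mapsto (x\rho)|_U$; since $(x\rho)|_U$ is trivial on $V$, it lands in $\bar U^* = \widehat{U/V}$, which I identify with $V^\perp/U^\perp$ (a character of $U$ trivial on $V$ is a class of $V^\perp$ modulo $U^\perp$, using $U^\perp \subset V^\perp$). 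Now $\rho|_U : U \to U\rho = V^\perp$ is a topological isomorphism carrying the closed subgroup $V$ onto $V\rho = U^\perp$; passing to quotients it induces a topological isomorphism $U/V \to V^\perp/U^\perp$, which is precisely $\bar\rho$. Hence $\bar h$ is non-degenerate and $\gamma(\bar h)$ is defined via Theorem \ref{windex}.

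For the Weil index equality: by (\ref{weilindex}) the distribution $\mathcal{F}_G h$ is the continuous function $\gamma(h)|\rho|^{-1/2}h(x^*\rho^{-1})^{-1}$ on $G^*$, so it restricts to the closed subgroup $V^\perp = U\rho$, where for $x^* = u\rho$ ($u \in U$) it equals $\gamma(h)|\rho|^{-1/2}\bar h(\bar u)^{-1}$, a function pulled back from $\bar U^* = V^\perp/U^\perp$. I compute this restriction a second way using the standard identity $\mathcal{F}_{G/V}(f_V) = (\mathcal{F}_G f)|_{V^\perp}$, where $f_V(\dot x) = \int_V f(x + v)\,dv$ is the push-forward to $G/V$ and $\widehat{G/V}$ is identified with $V^\perp$ (compatible measures $dx = dv\,d\dot x$). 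For $f = h$, since $h(x + v) = h(x)\beta(x, v)$ and $h|_V = 1$, the inner integral $\int_V \beta(x, v)\,dv = \int_V (v, x\rho)\,dv$ is, as a distribution on $G/V$, the Haar measure of $\bar U = U/V$ up to a positive constant $c_0$ (it is supported where $x\rho \in V^\perp$, i.e. $x \in U$); thus $h_V$ is $c_0\,\bar h$ times the Haar measure of $\bar U$. The Fourier transform on $G/V$ of such a measure depends only on the restriction of the character to $\bar U$, i.e. only on the image in $V^\perp/U^\perp = \bar U^*$, and equals $c_0\,\mathcal{F}_{\bar U}\bar h$ there. Comparing the two computations, $(\mathcal{F}_G h)|_{V^\perp}$ equals both $\gamma(h)|\rho|^{-1/2}\bar h(\bar x^*\bar\rho^{-1})^{-1}$ and $c_0\,\gamma(\bar h)|\bar\rho|^{-1/2}\bar h(\bar x^*\bar\rho^{-1})^{-1}$, the two quadratic characters agreeing because $\bar\rho$ is induced by $\rho$ (as shown above, $(x^*\rho^{-1})\bmod V = \bar x^*\bar\rho^{-1}$ for $x^* \in V^\perp$). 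Cancelling this common nonvanishing character and taking absolute values shows the positive factors coincide, whence $\gamma(h) = \gamma(\bar h)$.

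The main obstacle is the rigor of the last paragraph when $V$ (equivalently $G/U \cong V^*$) is non-compact: there $f_V$ and the ``restriction to $V^\perp$'' are genuine distributions, and $\int_V \beta(x, v)\,dv$ is a delta-type distribution along $U/V$. I would carry out everything weakly, pairing against Bruhat--Schwartz functions on $G$ and $G/V$ and invoking the pairing form of (\ref{weilindex}) (namely $\int_G h\,f = \gamma(h)|\rho|^{-1/2}\int_{G^*}h(x^*\rho^{-1})^{-1}\mathcal{F}_G f\,dx^*$ for $f \in \mathcal{S}(G)$) together with its analogue on $\bar U$; the continuity of $\mathcal{F}_G h$ and $\mathcal{F}_{\bar U}\bar h$ provided by Theorem \ref{windex} then upgrades the resulting weak identity to the pointwise comparison above. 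Tracking compatible Haar measures on $V$, $\bar U$, and $G/V$ is the only remaining bookkeeping, and it affects solely the positive constant, not the unit-modulus Weil index.
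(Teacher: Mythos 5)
Your proof is correct in substance, but it takes a genuinely different route from the paper's. The paper stays entirely at the level of Bruhat--Schwartz functions and the projective $SL_2(\mathbb{Z})$-action: it introduces the surjective intertwiner $I = i^*\circ p_*\colon \mathcal{S}(G)\to\mathcal{S}(\bar U)$, shows $I$ commutes exactly with $\pi(t_0(h))$ and commutes with $\pi(d_0'(\rho^{-1}))$ up to a positive scalar (by applying $F_{\bar G}$ to both $I\circ\pi(d_0'(\rho^{-1}))f$ and $\pi(d_0'(\bar\rho^{-1}))\circ If$ for arbitrary $f\in\mathcal{S}(G)$ and invoking Fourier inversion), and then reads off $\gamma(h)=\gamma(\bar h)$ from the relation (\ref{2.3rel}), where the Weil index sits as a central multiplier. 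You instead manipulate the distribution $h$ itself: you restrict the continuous function $Fh$ supplied by Theorem \ref{windex} to $V^\perp = U\rho$ and compare with the Fourier transform over $G/V$ of the fiber integral $p_*h = c_0\,\bar h\,d\mu_{\bar U}$, so the equality of indices drops out of (\ref{weilindex}) applied on $G$ and on $\bar U$, with no representation theory at all. Your duality bookkeeping ($V^\perp = U\rho$, $\bar U^* = V^\perp/U^\perp$, $\bar\rho$ induced by $\rho$) agrees with the paper's, and your modulus argument for cancelling the positive Haar-measure constants against the common nonvanishing quadratic character is exactly right. The trade-off is this: the paper's route keeps every integral absolutely convergent, since all operators act on genuine Schwartz functions, at the price of routing through the machinery behind (\ref{2.3rel}); your route is shorter and closer to the definition of $\gamma(h)$, but its crux --- that $p_*h$ exists as a tempered distribution equal to $c_0\,\bar h\,d\mu_{\bar U}$ and that $\mathcal{F}_{G/V}\circ p_*$ matches restriction-to-$V^\perp$ of $\mathcal{F}_G$ on $h$, not merely on Schwartz functions --- is precisely the delta-type localization $\int_V (v,x\rho)\,dv = c\,\delta_{U/V}$ that you flag, and it genuinely requires the weak regularization you sketch (legitimate here only because $Fh$ is continuous and bounded). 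Once written out, that regularization is essentially the paper's Fourier-inversion computation with $f_1,f_2$ transposed from test functions to the distribution, so the two proofs share the same analytic kernel. One further point in the paper's favor: its stronger conclusion --- commutation of $I$ with the whole projective action up to positive scalars --- is reused later (diagram (\ref{4.12})) to define the Weil index for LCA(2) groups, which your distributional version would not directly supply.
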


\begin{proof} It is clear that $\bar h$ is well-defined and non-degenerate.
  We see that the dual $\bar U^*$  of $\bar U$ is $ U \rho / U^{\perp} $.
  The map $\bar \rho :   \bar U \to  \bar U^*$ given by $ \bar h( \bar{x} + \bar{y} ) = \bar h (\bar{x} ) \bar h ( \bar{y} ) ( \bar{x} , \bar{y}\bar \rho )$ is
  the map induced from $\rho $:
  \[    \bar U = U / U^{\perp } \rho^{-1} \to \bar U^* =   U \rho / U^{\perp}  .\]
  We have the operators
    \[  \pi ( t ( \bar h ) ) , \quad \pi ( d_0' ( {\bar \rho}^{-1} )): \; \; {\cal S} ( \bar U ) \to {\cal S} ( \bar U ), \]
  which satisfy relations similar to (\ref{4power}) and (\ref{2.3rel}).
   Fix Haar measures on  $U^{\perp } \rho^{-1}$ and $U$, which then induce  Haar measures on $\bar U$ and $\bar G:= G/U^\perp\rho^{-1}$. Note that $\bar G =(G^*/U^\perp)\rho^{-1} =U^*\rho^{-1}$. Consider the natural projection $G\stackrel{p}{\longrightarrow} \bar G$ and closed embedding $\bar U\stackrel{ i}{\longrightarrow}\bar G$.
    Define a surjective map
    $ I:   {\cal S} ( G ) \to {\cal S} ( \bar U ) $ as the composition
           \begin{displaymath}
\xymatrix{
\cal S (G)  \ar[r]^{p_*}    &  \cal S(\bar G)  \ar[r]^{i^*}
&  \cal S(\bar U) }
\end{displaymath}
where $p_*$ is the integration over the fibre of $p$,
\[    p_*f ( \bar{x} ) =    \int_{ U^{\perp } \rho^{-1} }      f ( x + t )   dt,\quad f\in \cal S(G), \
p(x)=\bar x,
\]
and $i^*$ is the restriction.  Consider the following diagrams
\[
\xymatrix{
     {\cal S}  ( G ) \ar[r]^{ \pi ( t_0 ( h )) }    \ar[d]^{I}   &   {\cal S} ( G ) \ar[d]^I \\
    {\cal S} ( \bar U  )   \ar[r]^{ \pi ( t_0 (\bar  h )) }   &  {\cal S} ( \bar U   )
}    \; \; \; \; \; \; \ \
\xymatrix{
     {\cal S} ( G ) \ar[r]^{ \pi ( d_0' ( \rho^{-1} )  ) }  \ar[d]^I  &   {\cal S}( G ) \ar[d]^I  \\
    {\cal S} ( \bar U  )   \ar[r]^{ \pi ( d_0' ( {\bar \rho}^{-1} )   ) }  &  {\cal S} ( \bar U   )
}
\]
It is clear that the left diagram commutes, and we shall prove that the right diagram commutes up to a positive scalar depending on the
 the choices of Haar measures on $G$ and $\bar U$ used in the definition of $ \pi ( d_0' ( \rho^{-1} )  )$ and  $\pi ( d_0' ( {\bar \rho}^{-1} )   )$, which then by
   (\ref{2.3rel}) implies that $ \gamma ( h ) = \gamma ( \bar h ) $.

To this end, take $f\in \cal S(G)$ and compare the functions in $\cal S(\bar U)$,
   \begin{align*}
  & f_1(\bar{x})= I\circ \pi(d_0'(\rho^{-1}))f(\bar{x})=\int_{U^\perp\rho^{-1}}Ff(-(x+t)\rho)dt,\\
  &
 f_2(\bar{x})=  \pi(d_0'(\bar{\rho}^{-1})) \circ I f(\bar{x})=\int_{\bar U} \int_{U^\perp\rho^{-1}} f(y+t)dt (\bar{y},-\bar{x}\bar{\rho}) d\bar{y}=\int_U f(y)(y,-x\rho)dy.
   \end{align*}
  It is more convenient to view $f_1,$ $f_2$ as functions in $\cal S(\bar G)$. In fact, we may extend them as $f_1(\bar x)=p_*(Ff(-x\rho))\in \cal S(\bar G)$ and $f_2(\bar x)=F_Uf(-x\rho)\in \cal S(U^*\rho^{-1})=\cal S(\bar G)$, where $F_U: \cal S(U)\to \cal S(U^*)$ is the Fourier transform over $U$. Then it suffices to show that $F_{\bar G} f_1$ is a positive multiple of $F_{\bar G}f_2$, where $F_{\bar G}: \cal S(\bar {G} )\to \cal S(\bar G^*)$ is the Fourier transform over $\bar G$. Take $x^*\in \bar G^*=U\rho$. We compute that
  \begin{align*}
  F_{\bar G}f_1( x^*)&=\int_{\bar G}\int_{U^\perp\rho^{-1}}Ff(-(x+t)\rho)dt( \bar x, \bar x^*)d\bar x\\
  &=\int_G Ff(-x\rho)(x, x^*) dx\\
  &=\frac{1}{|\rho|}\int_{G^*} Ff(y^*)(y^*, -x^*\rho^{-1})dx\\
  &=\frac{1}{|\rho|} f(x^*\rho^{-1}),\\
  F_{\bar G}f_2(x^*)&=\int_{U^*\rho^{-1}}F_Uf(-x\rho)(x, x^*)dx\\
  &= \frac{1}{|\rho_{\bar G}|}\int_{U^*} F_Uf(y^*)(y^*, -x^*\rho^{-1})dx\\
  &=\frac{1}{|\rho_{\bar G}|}f(x^*\rho^{-1})
  \end{align*}
    by the Fourier inversion formula, where $\rho_{\bar G}: \bar{G}=U^*\rho^{-1}\to U^*$ is induced from $\rho$. This finishes the proof of the proposition.
\end{proof}

\begin{cor}\label{cor2.6}  Let $h$ be a non-degenerate quadratic character of $G$ satisfying $h( x ) = h( -x )$ and $\rho : G\to G^*$ be the associated isomorphism. If there is  a closed subgroup $U \subset G $ satisfying $ U^{\perp} \rho^{-1} = U$ and $h(U^\perp \rho^{-1})=h(U)=1$, then
   $ \gamma ( h ) =1 $.
\end{cor}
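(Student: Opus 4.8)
The plan is to obtain this as a direct specialization of Proposition \ref{prop2.5}. That proposition requires $U^\perp\rho^{-1}\subset U$ together with $h(U^\perp\rho^{-1})=1$; in the present situation we are handed the stronger hypothesis $U^\perp\rho^{-1}=U$ along with $h(U)=1$. Both conditions of Proposition \ref{prop2.5} are therefore satisfied, so I would immediately invoke it to conclude $\gamma(h)=\gamma(\bar h)$, where $\bar h$ is the induced non-degenerate quadratic character on $\bar U=U/U^\perp\rho^{-1}$.

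The decisive observation is that, because we now have the equality $U^\perp\rho^{-1}=U$ rather than a proper inclusion, the quotient collapses entirely: $\bar U=U/U=\{0\}$ is the trivial group. Thus the whole question is reduced to computing the Weil index of the (necessarily trivial) quadratic character on the zero group.

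First I would verify that this residual Weil index equals $1$. On $\bar U=\{0\}$ the dual $\bar U^*$ is also trivial, the Bruhat--Schwartz space $\mathcal S(\bar U)$ is one-dimensional, and $\bar h(\bar 0)=1$. Since the Weil index is independent of the choice of Haar measure, I would normalize the point mass on $\bar U$ and its dual to $1$; then the Fourier transform is the identity on $\mathcal S(\bar U)\cong\mathbb C$, the operators $\pi(t_0(\bar h))$ and $\pi(d_0'(\bar\rho^{-1}))$ are both the identity, and $|\bar\rho|=1$. Reading off relation (\ref{2.3rel}) in this degenerate setting then gives $\gamma(\bar h)=1$; equivalently, the distributional identity (\ref{weilindex}) on the zero group reduces to $1=c\cdot 1\cdot 1$, forcing $c=1$. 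Combining the two steps yields $\gamma(h)=\gamma(\bar h)=1$.

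There is essentially no real obstacle beyond bookkeeping: the only point deserving a moment's care is confirming that the Lagrangian-type condition $U^\perp\rho^{-1}=U$ genuinely renders $\bar U$ trivial, and that all the normalizing constants on the one-point group conspire so that the residual Weil index is $1$. Both are immediate once the definitions from Subsection \ref{ss3.3} are unwound, so I expect the entire argument to be short.
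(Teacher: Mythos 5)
Your proposal is correct and is exactly the route the paper intends: Corollary \ref{cor2.6} is stated as an immediate consequence of Proposition \ref{prop2.5}, with the Lagrangian condition $U^{\perp}\rho^{-1}=U$ collapsing $\bar U$ to the trivial group, whose trivial (vacuously non-degenerate) quadratic character has Weil index $1$. Your verification of the normalizations on the one-point group simply fills in the bookkeeping the paper leaves implicit.
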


Corollary \ref{cor2.6} is \cite[Theorem 5]{W}. It was used in \cite{W} to prove the following product formula for Weil index of a non-degenerate
 quadratic from over a global field.

\begin{theorem}\label{thm2.7}  Let $k$ be a global field, $M$ be a finite-dimensional vector space over $k$,
 $q $ be a non-degenerate quadratic form on $M$.  Let $ \psi = \prod_v \psi_v$ be a nontrivial additive character of the adele ring ${\Bbb A}_k$, which is trivial
  restricted on $k$.  Then $ \psi_v ( q ( x ) ) $ is a non-degenerate quadratic character of $M_v =  M\otimes_k k_v$. Let $\gamma_v ( q )$ be
  the Weil index of $ \psi_v ( q ( x ) )$. Then $\gamma_v ( q )=1$ for almost all $v$ and
  \[   \prod_v \gamma_v ( q ) = 1 .\]
\end{theorem}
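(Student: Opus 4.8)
The plan is to pass to the adele group and deduce the identity from the self-duality criterion in Corollary \ref{cor2.6}. I would set $G = M_{\mathbb{A}} := M \otimes_k \mathbb{A}_k$, the restricted product of the local groups $M_v = M \otimes_k k_v$ with respect to the lattices $L_v := M \otimes_k \mathcal{O}_v$; this is a locally compact abelian group, so the theory of Section \ref{ss3.3} applies. The global quadratic character $h(x) = \psi(q(x))$ on $G$ is non-degenerate and satisfies $h(x) = h(-x)$, its associated isomorphism $\rho : G \to G^*$ being the adelization of the polarization $B(x,y) = q(x+y) - q(x) - q(y)$ followed by $\psi$. By construction $h = \prod_v h_v$ with $h_v(x_v) = \psi_v(q(x_v))$.

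First I would establish the factorization $\gamma(h) = \prod_v \gamma_v(q)$. Since the Bruhat--Schwartz space $\mathcal{S}(G)$ is the restricted tensor product of the local spaces $\mathcal{S}(M_v)$, and the Fourier transform on $G$ is correspondingly the restricted product of the local Fourier transforms, the defining relation (\ref{weilindex}) for $\mathcal{F}h$ factors place by place: the positive normalizing factors $|\rho_v|^{-1/2}$ match up via the product formula for $k$ (the relevant module being the valuation of a global element), while the phase scalars $\gamma_v(q)$ multiply to the global Weil index $\gamma(h)$. The finiteness of this product, i.e. that $\gamma_v(q) = 1$ for almost all $v$, follows by applying Corollary \ref{cor2.6} locally: for all but finitely many $v$ the character $\psi_v$ is unramified, $q$ has $\mathcal{O}_v$-integral coefficients, and $L_v$ is self-dual, so that $L_v^\perp \rho_v^{-1} = L_v$ and $h_v(L_v) = \psi_v(q(L_v)) \subseteq \psi_v(\mathcal{O}_v) = 1$, whence $\gamma_v(q) = 1$.

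Next I would apply Corollary \ref{cor2.6} to $G$ itself with the closed subgroup $U = M$ embedded diagonally (discretely and cocompactly) in $G$. Two conditions must be checked. The vanishing $h(U) = 1$ is immediate, since for $m \in M$ one has $q(m) \in k$ and $\psi$ is trivial on $k$, so $h(m) = \psi(q(m)) = 1$. The self-duality $U^\perp \rho^{-1} = U$ amounts to the claim that any $x \in G$ with $\psi(B(m,x)) = 1$ for all $m \in M$ lies in $M$; this follows from the non-degeneracy of $B$ over $k$ together with the fundamental fact that $k$ is its own annihilator in $\mathbb{A}_k$ under the pairing $(\xi,\eta) \mapsto \psi(\xi\eta)$. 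Corollary \ref{cor2.6} then yields $\gamma(h) = 1$, and combined with the factorization this gives $\prod_v \gamma_v(q) = 1$.

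The main obstacle is the factorization step: one must justify carefully that both the Fourier transform and the Weil-index scalar are multiplicative across the restricted product, and that the local contribution is trivial outside a finite set of places. By contrast, the global vanishing via Corollary \ref{cor2.6} is essentially formal once the self-duality of $M$ inside $M_{\mathbb{A}}$ has been recorded.
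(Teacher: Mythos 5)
Your proposal is correct and follows essentially the same route as the paper: pass to the adelic group $M_{\Bbb A}$, apply Corollary \ref{cor2.6} globally with $U=M$ (using that $k$ is its own annihilator in ${\Bbb A}_k$ under $\psi$) and locally with the self-dual lattices $\mathcal{O}_v^n$ at almost all places, then factorize $\gamma(\psi_q)=\prod_v\gamma_v(q)$. The only cosmetic difference is in the factorization step: you invoke the restricted tensor product structure of $\mathcal{S}(M_{\Bbb A})$ and the place-by-place Fourier transform, while the paper more simply splits $M_{\Bbb A}=M_S\times M^S$ and uses the two-factor multiplicativity $\gamma(q_1\times q_2)=\gamma(q_1)\gamma(q_2)$, with the tail $M^S$ handled by the same self-duality argument.
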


\begin{proof}
 It is  \cite[Proposition 5]{W}, and we shall sketch a proof.
 Consider the quadratic character on the adelic space $M_{\Bbb A} := M\otimes_k {\Bbb A}_k = \prod_v' M_v $.
 Then $q$ and $\psi $ define a non-degenerate quadratic character
  \[  \psi_q : M_{\Bbb A}  \to {\Bbb T} , \ \  \{ x_v \}_v \mapsto \prod_v  \psi_v ( q ( x_v ) ) . \]
  Using the fact that $  a= ( a_v ) \in {\Bbb A}_k $ is in $k$ iff and
  $ \psi ( a r ) = 1 $ for all $ r\in k$,
 we can prove  that $\psi_q$ induces an isomorphism $\rho: M\stackrel{\sim}{\rightarrow}  M^\perp$. By Corollary \ref{cor2.6}, we have  $\gamma ( \psi_q )=1$. Fix a linear isomorphism $M\cong k^n$ by taking a basis, where $n=\dim M$, and let $q$ be represented by a matrix $Q\in GL_n(k)$. Then there exists a finite set $S$ of places of $k$ including all the archimedean ones,  such that
  $\psi_v$ has conductor $\cal O_v$ and $Q\in GL_n(\cal O_{k_v})$ for $v\not\in S$. It is easy to check that $\psi_v(q(\cdot))$ induces an isomorphism $\rho_v: \cal O_v^n\stackrel{\sim}{\rightarrow} (\cal O_v^n)^\perp$ for any such $v$. By
  Corollary \ref{cor2.6} again, we have $\gamma_v(q)=1$ for $v\not\in S$.

It is clear that if $q_i$ is a non-degenerate quadratic character of an LCA group $G_i$, $i=1,2$, then $\gamma(q_1\times q_2)=\gamma(q_1)\gamma(q_2)$. Applying this observation to
$
M_S=\prod_{v\in S}M_v$ and $M^S=\prod'_{v\not\in S}M_v,
$
where $S$ is as above,
it follows that
\[
\gamma(\psi_q)=\prod_v\gamma_v(q).
\]
This finishes the proof.
\end{proof}

Define  a quadratic character $ h' ( x^* ) = h ( x^* \rho^{-1} )^{-1} $ of $G^*$ under the conditions in Theorem \ref{thm2.2.1}. Let $\pi ( t_0' ( h' ) )$ be given by
\[       \pi ( t_0' ( h' ) ) =  \pi ( t_0 ( h ))^{-1}   \pi (    d_0' ( \rho^{-1} ))^{-1} \pi (   t_0 ( h ))^{-1}  . \]
 Then
 \[    \pi ( t_0' ( h' ) ) \phi ( x ) = |\rho |  h( x)^{-1}  \int_G   (-x\rho  , y )   h (  y )^{-1} \phi ( y ) dy   = |\rho |  \int_G   h ( x - y )^{-1} \phi ( y ) dy
    .               \]

\subsection{Warm-up for LCA(2)} Before introducing the general definition of  LCA(2) groups and product formula of Weil index in the later sections, we first give an example.

 Let $V$ be an infinite-dimensional vector space over a local field $k$, and $U_0$ be a given subspace. Let $h$ be a non-degenerate quadratic character of $V$, and $\rho: V\to V^\vee=\textrm{Hom}_k(V,k)$ be the induced inclusion. If there is a subspace $U$ of $V$ commensurable with $U_0$ such that $U^\perp\rho^{-1}\subset U$ and $U/U^\perp \rho^{-1}$ is finite-dimensional, then $h$ induces a non-degenerate quadratic character $\bar{h}$ on $U/U^\perp \rho^{-1}$. We define the Weil index $\gamma(h)$ of $h$ to be $\gamma(\bar{h})$, which only depends on the commensurable class of $U_0$ by Proposition \ref{prop2.5}.  Indeed, take another subspace $U'$ commensurable with $U_0$ such that $U'^\perp \rho^{-1}\subset U'$. Replacing $U'$ by $U+U'$ if necessary, by symmetry we may assume that $U\subset U'$. Then obviously
 $U'^\perp\rho^{-1}\subset U^\perp\rho^{-1}\subset U\subset U'$. Also $U'/U'^\perp\rho^{-1}$ is finite-dimensional since $\dim U'/U=\dim U^\perp\rho^{-1}/U'^\perp\rho^{-1}$ is finite. Let $\bar{h}'$ be the induced quadratic character on $U'/U'^\perp\rho^{-1}$. By Proposition \ref{prop2.5} we have $\gamma(\bar{h})=\gamma(\bar{h}')$ hence $\gamma(h)$ is well-defined.

For example let $V=k((t))^n$ and $U_0=k[[t]]^n$. Assume that $h(x)=\psi(\textrm{Res}(xQx^T \omega ))$ for some $Q\in GL_n(k((t)))$ and nonzero $\omega\in k((t))dt$,
where $\psi$ is a nontrivial additive character of $k$. Then the above conditions are satisfied
and we may define the Weil index $\gamma(h)$.

Now assume that $k$ is global, and $\psi=\prod_v\psi_v$ is a character of the adele ring $\mathbb{A}_k$ of $k$, which is trivial restricted on $k$. Assume that the datum $Q$ and $\omega$ are rational, and denote by $\gamma_v(h)$ the Weil index of the quadratic character $\psi_v(\textrm{Res}(xQx^T \omega ))$ of $V_v:=k_v((t))^n$, which depends on $Q$ and $\omega$. Then $\gamma_v(h)=1$ for almost all $v$ and one has the product formula
\[
\prod_v \gamma_v(h)=1.
\]

\section{ Category LCA(2)} \label{s4}

In this section we introduce the category LCA(2) and prove that it is an exact category, and we  also introduce the space of Bruhat-Schwartz functions on an LCA(2) group.

\subsection{\bf Category LCA(2) }\label{ss4.1}

An object in LCA(2) is an abelian group $G$ with a nonempty class ${\cal U} ( G )$ of abelian subgroups of $G$ such that
 for any $ S_1 \subset S_2 $ in ${\cal U } ( G ) $, the quotient group $S_2 / S_1 $ has an LCA structure, and the following axioms are satisfied.
\\ LCA(2)-1.  $ \bigcup_{  S\in {\cal U} ( G )  }  S = G $, $ \bigcap_{  S\in {\cal U} ( G )  }  S = \{ 0 \}  $.
\\   LCA(2)-2.  For any $ S_1 , S_2 \in {\cal U} ( G ) $, there exist $S , S' \in {\cal U} ( G ) $ such that
 $ S \subset S_i \subset S'$, $i=1,2$.
\\LCA(2)-3.   For $S_1 ,S_2 , S_3  \in {\cal U } ( G )$ with $S_1 \subset S_2 \subset S_3$, the sequence
 \[    S_2 / S_1 \to S_3 /S_1 \to  S_3 / S_2  \]
 is an exact triple in LCA.
\\LCA(2)-4.    For $ S_1 , S_2 \in {\cal U } ( G ) $ with $S_1 \subset S_2$, if $ U \subset S_2 / S_1 $ is a closed subgroup,
 then $\pi^{-1} ( U ) $, the preimage of $U$ under the natural map $ \pi : S_2 \to S_2 / S_1 $, is in ${\cal U } ( G )$.

An object in LCA(2) is called an LCA(2) group. We often denote ${\cal U} ( G )$ by ${\cal U}$. A subfamily
 ${\cal B} \subset {\cal U} $ is called a {\bf basis} of $( G , {\cal U})$ if for every $ S\in {\cal U}$, there exist
 $ S_1 , S_2 \in {\cal B}$ such that $   S_1 \subset S \subset S_2 $.

  To construct examples of objects in  LCA(2), we use the following lemma.

\begin{lemma}\label{lemma3.1}   Let $G$ be an abelian group with a nonempty class of subgroups ${\cal B}  $ such that for every $S_1 , S_2 \in {\cal B} $
 with $S_1 \subset S_2$, the quotient $S_2 /S_1 $ has an LCA structure. Assume that  LCA(2)-1,  LCA(2)-2 and  LCA(2)-3 hold for ${\cal B}$, then
 there is a unique LCA(2) structure ${\cal U}$ on $A$ such that ${\cal B}$  is a basis of $(G , {\cal U}) $, and that for   $S_1 , S_2 \in {\cal B}$
with $S_1 \subset S_2$ , the LCA structures on $S_2 / S_1$ for ${\cal B}$ and ${\cal U}$ coincide.
     \end{lemma}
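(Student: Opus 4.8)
The plan is to construct ${\cal U}$ by closing up ${\cal B}$ under the operation forced by axiom LCA(2)-4, and then to verify all four axioms together with uniqueness. Concretely, I would declare a subgroup $S \subset G$ to lie in ${\cal U}$ precisely when it admits a sandwich $B \subset S \subset B'$ with $B, B' \in {\cal B}$ such that $S/B$ is a closed subgroup of the LCA group $B'/B$. Since $0 = S/S$ is trivially closed, every $B \in {\cal B}$ lies in ${\cal U}$ (using LCA(2)-2 to supply surrounding basis elements), so ${\cal B} \subset {\cal U}$ and ${\cal B}$ is automatically a basis. For $S_1 \subset S_2$ in ${\cal U}$ I would topologize $S_2/S_1$ as follows: applying LCA(2)-2 repeatedly shows ${\cal B}$ is directed both downward and upward, so there exist $B \subset S_1 \subset S_2 \subset B'$ with $B, B' \in {\cal B}$; then $S_1/B \subset S_2/B$ are closed subgroups of $B'/B$, and I give $S_2/S_1 = (S_2/B)/(S_1/B)$ the subquotient topology inherited from $B'/B$.

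Before anything else works I must check that neither the membership criterion nor the resulting topology depends on the chosen sandwich; this is the technical heart of the lemma. Given two sandwiches, downward/upward directedness of ${\cal B}$ lets me pass to a common refinement, so it suffices to compare a sandwich $B \subset S_1 \subset S_2 \subset B'$ with a larger one $D \subset B \subset S_1 \subset S_2 \subset B' \subset D'$. Here I would invoke LCA(2)-3 for the chains in ${\cal B}$: the triples $D \subset B' \subset D'$ and $D \subset B \subset B'$ exhibit $B'/B$ as the Hausdorff quotient of the closed subgroup $B'/D$ of $D'/D$, and this identification carries $S_i/D$ onto $S_i/B$ with kernel $B/D \subset S_1/D$. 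Because LCA carries the exact structure of the earlier theorem, restriction to a closed subgroup and passage to a Hausdorff quotient compose coherently, so the second/third isomorphism theorems in LCA give $(S_2/D)/(S_1/D) \cong (S_2/B)/(S_1/B)$ as topological groups, and simultaneously show $S_i/D$ is closed in $D'/D$ iff $S_i/B$ is closed in $B'/B$. This compatibility of subquotients in the exact category LCA is the step I expect to require the most care. Taking $B = S_1$, $B' = S_2$ when these already lie in ${\cal B}$ then shows the topology on $S_2/S_1$ agrees with the one given by ${\cal B}$.

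With well-definedness in hand, the four axioms for ${\cal U}$ fall out by reducing each statement to a single ambient LCA group. Axiom LCA(2)-1 is immediate from ${\cal B} \subset {\cal U}$ and LCA(2)-1 for ${\cal B}$, and LCA(2)-2 follows from the directedness of ${\cal B}$ since the sandwiching elements lie in ${\cal B} \subset {\cal U}$. For LCA(2)-3 with $S_1 \subset S_2 \subset S_3$ in ${\cal U}$, I would sandwich all three by $B \subset S_1 \subset S_2 \subset S_3 \subset B'$ and set $\overline{S_i} := S_i/B$, which are closed subgroups of $H := B'/B$; the required exact triple $\overline{S_2}/\overline{S_1} \to \overline{S_3}/\overline{S_1} \to \overline{S_3}/\overline{S_2}$ is then the standard Noether exact triple for a nested chain of closed subgroups of a single LCA group, valid because LCA is an exact category. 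For LCA(2)-4, given $U$ closed in $S_2/S_1$ and $\pi : S_2 \to S_2/S_1$, I again sandwich $B \subset S_1 \subset S_2 \subset B'$; the preimage of $U$ under the continuous quotient map $S_2/B \to S_2/S_1$ is closed in $S_2/B$, hence closed in $B'/B$ as $S_2/B$ is closed there, and this preimage is exactly $\pi^{-1}(U)/B$, so $\pi^{-1}(U) \in {\cal U}$.

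Finally, uniqueness is straightforward and can be argued directly. If ${\cal U}'$ is any LCA(2) structure having ${\cal B}$ as a basis with matching topologies on ${\cal B}$-quotients, then for $S \in {\cal U}$ with witnessing $B \subset S \subset B'$, axiom LCA(2)-4 applied inside ${\cal U}'$ to the closed subgroup $S/B$ of $B'/B$ forces $S = \pi^{-1}(S/B) \in {\cal U}'$; conversely, for $S \in {\cal U}'$ a basis sandwich $B \subset S \subset B'$ together with LCA(2)-3 for ${\cal U}'$ shows $S/B$ is closed in $B'/B$ in the ${\cal B}$-topology, so $S$ meets the defining condition of ${\cal U}$. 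Hence ${\cal U}' = {\cal U}$.
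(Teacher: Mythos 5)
Your construction is exactly the paper's: your membership criterion ``$B \subset S \subset B'$ with $S/B$ closed in $B'/B$'' is the paper's definition of the sets $[B,B']$ with ${\cal U}$ their union, your common-refinement argument via LCA(2)-3 is the paper's (unproved) claim that $H \in [S_1,S_2]$ iff $H \in [S_1',S_2']$, and the subquotient topology on $H_2/H_1$ is assigned the same way. The proposal is correct and in fact supplies details the paper's sketch omits (verification of the four axioms and the uniqueness argument); the only point to tighten is that uniqueness of the \emph{structure} also requires checking that the ${\cal U}'$-topologies on all quotients $H_2/H_1$ agree with yours, which follows by the same LCA(2)-3 sandwich argument you already use.
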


\begin{proof} (sketch)   For $S_1 ,  S_2 \in {\cal B}$ with $S_1 \subset S_2$, we let
 $[ S_1 , S_2 ]$ be the set of subgroups $H$ satisfying $ S_1 \subset H \subset S_2$ and $H/ S_1 $ is a closed subgroup of $S_2/ S_1$.
 It is easy to prove that for $S_1 , S_2 , S_1' , S_2' \in {\cal B}$ with $S_1' \subset  S_1 \subset S_2 \subset S_2' $, and a subgroup $H$ with
 $S_1 \subset H \subset  S_2 $,   we have $ H \in [S_1 , S_2 ] $ iff  $ H \in [S_1' , S_2' ] $.
 Let $ {\cal U}$ be the union of all $[S_1 , S_2 ]$ for $S_1\subset  S_2$  in ${\cal B}$.   For $H_1 , H_2 \in {\cal U}$ with $H_1 \subset H_2$, since { LCA(2)-2} holds for
  ${\cal B}$, we can find $S_1 , S_2 \in {\cal B}$ such that $ S_1 \subset H_1  \subset H_2 \subset S_2$. Then we have closed subgroups
   $  H_1/  S_1 \subset H_2 /S_1 \subset  S_2 / S_1$, and we give an LCA structure on $H_2 / H_1 $ as the quotient LCA of $H_2/S_1$ by $H_1/S_1$. It is easy to see that
    the LCA structure on $H_2 / H_1 $ is independent of the choice of $ S_1 , S_2$.
   \end{proof}

\noindent {\bf Example 3.1.1}.        Let $G$ be an LCA group and $G((t))$ be the set of Laurent power series in $t$ with coefficients in $G$.
   Then $G((t))$ has an obvious abelian group structure.  Let $ {\cal B} = \{  t^n G[[t]] \, | \, n \in {\Bbb Z} \}$.
   For any $n \geq m $,   $ t^n G [[ t]] \subset   t^m G [[ t]]$ and we give the quotient $   t^m G [[ t]] /  t^n G [[ t]] \cong G^{n -m }$
  the obvious LCA structure.  It is clear that LCA(2)-1, LCA(2)-2 and LCA(2)-3 hold for ${\cal B}$. By Lemma \ref{lemma3.1},
   there is
   a unique LCA(2) structure on  $G((t))$ with ${\cal B}$ as a basis.
       In particular, if $k$ is a local field, then $ k(( t))^n = k^n (( t))   $ has an  LCA(2) structure.

\noindent {\bf Example 3.1.2}.   Let    $V$ be an infinite dimensional vector space over a local field $k$, and $W_0 $ be a subspace.
 Let $ {\cal B} $ be the set of  subspaces $W$ commensurable with $W_0$.   For any $W_1 , W_2 \in {\cal B} $ with $W_1 \subset W_2 $,
  since $W_1 , W_2$ are commensurable, the quotient $W_2/W_1$  is a finite-dimensional vector space over $k$, hence in particular an LCA group.
   All the conditions    LCA(2)-1,  LCA(2)-2 and  LCA(2)-3 hold for ${\cal B}$.  This basis gives $V$ an  LCA(2) structure
    that only depends on the commensurable class of $W_0$.

We remark that every object in the category  $C^{ar}_2$  defined in  \cite{OP2} has an LCA(2) structure.

Let $G, G'$ be objects in LCA(2). A morphism from $G$ to $G'$ is a map $ f: G \to G'$  satisfying
\\ Mor1.  $f$ is a group homomorphism.
\\Mor2.  For every $  S' \in {\cal U} ( G' ) $, there is $S\in  {\cal U} ( G )$ such that $ f ( S ) \subset S'$.
\\  Mor3.  For every $  S \in {\cal U} ( G ) $, there is $S'\in  {\cal U} ( G' )$ such that $ f ( S ) \subset S'$.
\\  Mor4.  For $S_1 , S_2 \in  {\cal U} ( G )$ with $S_1 \subset S_2 $ and for $S_1' , S_2' \in  {\cal U} ( G' )$ with $S_1' \subset S_2' $,
 if $f ( S_1 ) \subset S_1' $ and $f(S_2 ) \subset S_2 '$, then the induced map
   \[    S_2 / S_1 \to   S_2' / S_1' \]
 is a morphism in LCA, i.e., a continuous homomorphism of topological groups.

We first need to show that a composition of two morphisms is again a morphism.

\begin{lemma}\label{lemma3.2}
If $f: G_1\to G_2$ and $g: G_2\to G_3$ are morphisms in LCA(2), then $h=g\circ f: G_1\to G_3$
is also a morphism in LCA(2).
\end{lemma}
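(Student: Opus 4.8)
The plan is to verify directly that $h = g \circ f$ satisfies the four morphism axioms Mor1--Mor4. Mor1 is immediate, since the composition of group homomorphisms is a group homomorphism. The real content lies in the remaining three axioms, and the key is to use the fact that $g$ and $f$ individually satisfy the axioms, chaining them through an intermediate subgroup in $\mathcal{U}(G_2)$.

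First I would dispatch Mor2 and Mor3, which are the ``containment'' axioms and should be routine. For Mor2, given $S_3 \in \mathcal{U}(G_3)$, apply Mor2 for $g$ to obtain $S_2 \in \mathcal{U}(G_2)$ with $g(S_2) \subset S_3$, then apply Mor2 for $f$ to obtain $S_1 \in \mathcal{U}(G_1)$ with $f(S_1) \subset S_2$; then $h(S_1) = g(f(S_1)) \subset g(S_2) \subset S_3$, as required. For Mor3, given $S_1 \in \mathcal{U}(G_1)$, apply Mor3 for $f$ to get $S_2 \in \mathcal{U}(G_2)$ with $f(S_1) \subset S_2$, then Mor3 for $g$ to get $S_3 \in \mathcal{U}(G_3)$ with $g(S_2) \subset S_3$; again $h(S_1) \subset S_3$.

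The main work is Mor4, the continuity axiom. Suppose $S_1 \subset S_2$ in $\mathcal{U}(G_1)$ and $S_1'' \subset S_2''$ in $\mathcal{U}(G_3)$ with $h(S_1) \subset S_1''$ and $h(S_2) \subset S_2''$; I must show the induced map $S_2/S_1 \to S_2''/S_1''$ is continuous. The natural idea is to interpolate a pair $S_1' \subset S_2'$ in $\mathcal{U}(G_2)$ factoring the map in LCA through $S_2'/S_1'$. The subtlety is that I need $f(S_i) \subset S_i'$ and $g(S_i') \subset S_i''$ simultaneously, so that both induced maps $S_2/S_1 \to S_2'/S_1'$ and $S_2'/S_1' \to S_2''/S_1''$ are defined and continuous by Mor4 for $f$ and $g$ respectively, and their composite equals the induced map for $h$. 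I would construct these interpolating subgroups using Mor3 for $f$ (to find subgroups of $G_2$ containing $f(S_1), f(S_2)$) together with the lattice axiom LCA(2)-2, and then enlarge and intersect as needed so that the chosen $S_1', S_2'$ lie in $\mathcal{U}(G_2)$, satisfy $S_1' \subset S_2'$, and respect both containments. Here I will also want to invoke LCA(2)-4 to ensure that the subgroups obtained as preimages of closed subgroups (forced by the continuity constraints) remain in the class $\mathcal{U}(G_2)$.

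The hard part will be arranging the interpolating subgroups $S_1', S_2' \in \mathcal{U}(G_2)$ to fit \emph{both} the images $f(S_1), f(S_2)$ from below and the preimages under $g$ of $S_1'', S_2''$ from above, while preserving the inclusion $S_1' \subset S_2'$. A clean way is: first use Mor3 for $f$ to find $T_1, T_2 \in \mathcal{U}(G_2)$ with $f(S_i) \subset T_i$, then use LCA(2)-2 to pass to a common $T \in \mathcal{U}(G_2)$ above both; the delicate step is then replacing $T$ by a well-chosen member of $\mathcal{U}(G_2)$ mapping into $S_i''$ under $g$, which is where Mor2 for $g$ and the closure/preimage stability LCA(2)-4 enter. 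Once the factorization in LCA is set up, functoriality of the induced-map construction together with the fact that a composite of continuous homomorphisms is continuous finishes the proof.
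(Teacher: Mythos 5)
Your proposal is correct and takes essentially the same approach as the paper: Mor1--Mor3 by chaining through intermediate subgroups, and Mor4 by interpolating a pair $S_2\subset T_2$ in $\mathcal{U}(G_2)$ so that the induced map factors as a composite of two LCA morphisms. The delicate step you flag is carried out in the paper exactly along the lines you indicate: choosing $T_2'\supset f(T_1)$ and $T_3'\supset T_3$ with $g(T_2')\subset T_3'$, and $S_2'\subset T_2'$ with $g(S_2')\subset S_3$, one sets $S_2=T_2'\cap g^{-1}(S_3)$ and $T_2=T_2'\cap g^{-1}(T_3)$, which lie in $\mathcal{U}(G_2)$ by LCA(2)-4 because they are the preimages under $T_2'\to T_2'/S_2'$ of the closed kernels of the continuous induced maps into $T_3'/S_3$ and $T_3'/T_3$.
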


\begin{proof} It is easy to check that $h$ satisfies the first three axioms of morphisms. Let us prove that  Mor4 also holds for $h$.
Assume that $S_i, T_i\in {\cal U}(G_i)$, $S_i\subset T_i$, $i=1,3$, and $h(S_1)\subset S_3, h(T_1)\subset T_3$. We need to show
the induced map $\bar{h}: T_1/S_1\to T_3/T_3$ is a morphism in LCA. To this end it suffices to find $S_2, T_2\in {\cal U}(G_2)$ such that we have
$S_2\subset T_2$ and  $S_1\stackrel{f}{\to} S_2 \stackrel{g}{\to}S_3$, $T_1\stackrel{f}{\to} T_2\stackrel{g}{\to}T_3$.

Choose $T'_2\in{\cal U}(G_2)$, $T_3'\in{\cal U}(G_3)$ such that $f(T_1)\subset T_2'$, $T_3\subset T_3'$ and $g(T_2')\subset T_3'$.
Choose $S_2'\in{\cal U}(G_2)$ such that $S_2'\subset T_2'$ and $g(S_2')\subset S_3$. Consider the following diagram of maps
 \begin{displaymath}
\xymatrix{
T_2' \ar[r]^{\pi} & T_2'/S_2' \ar[r]^{g_1} \ar[dr]^{g_2} & T_3'/S_3 \ar[d]\\
& & T_3'/T_3
}
\end{displaymath}
Then $g_1$ and $g_2$ are morphisms in LCA. Let $S_2=\pi^{-1}(\mathrm{Ker}(g_1))=T_2'\cap g^{-1}(S_3)$ and $T_2=\pi^{-1}(\mathrm{Ker}(g_2))=T_2'\cap g^{-1}(T_3)$. Then $S_2, T_2\in{\cal U}(G_2)$ and satisfy our requirements.
\end{proof}

Thus we have defined the category LCA(2).   It is more convenient to use the following lemma to verify that a given map is a morphism

\begin{lemma}\label{lemma3.3}   Let $(G, {\cal U} ) , (G', {\cal U}')$ be objects in LCA(2).
 Let ${\cal P} \subset {\cal U}\times {\cal U}$ be a set with the properties that $ (S , T)\in {\cal P}$ implies $S\subset T$ and that
for any finitely many $S_1 , \dots , S_n \in {\cal U}$, there is $(S, T) \in {\cal P}$ such that
   $  S \subset S_i \subset T$ for all $S_i$. Suppose ${\cal P}'\subset {\cal U}'\times {\cal U}'$ satisfies the similar properties.
    Let ${\cal B}$  (resp. ${\cal B}'$) be a basis of $G$ (resp. $G'$).
         Assume that a group homomorphism $f : G \to G'$ satisfies the following
  \newline (1)  For every $  S' \in {\cal B}'  $, there is $S\in  {\cal B} $ such that $ f ( S ) \subset S'$.
\newline (2) For every $  S \in {\cal B}  $, there is $S'\in  {\cal B}' $ such that $ f ( S ) \subset S'$.
\newline (3)  For $(S , T) \in  {\cal P}$ and $(S' , T') \in  {\cal P}' $ with
  $f ( S ) \subset S' $ and $f( T ) \subset T'$, the induced map
   \[    T / S \to   T' / S' \]
 is a morphism in LCA.
 \newline Then $f$ is a morphism in LCA(2).
\end{lemma}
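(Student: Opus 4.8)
The plan is to verify the four axioms Mor1--Mor4 for $f$ in turn, the first three being immediate and the last carrying all the weight.

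Axiom Mor1 is part of the hypothesis. For Mor2, given $S'\in{\cal U}(G')$, use that ${\cal B}'$ is a basis to pick $B_1'\in{\cal B}'$ with $B_1'\subset S'$; by (1) there is $B_1\in{\cal B}$ with $f(B_1)\subset B_1'\subset S'$, and $B_1\in{\cal U}(G)$ does the job. Symmetrically, for Mor3, pick $B_2\in{\cal B}$ with $S\subset B_2$ and, by (2), some $B_2'\in{\cal B}'$ with $f(S)\subset f(B_2)\subset B_2'$. Thus Mor2 and Mor3 hold. I record two consequences for later use. First, ${\cal U}(G)$ is closed under finite intersection: for $W_1,W_2\in{\cal U}(G)$ choose (LCA(2)-2) bounds $S_*\subset W_1,W_2\subset S^*$ in ${\cal U}(G)$, note that $W_1/S_*,W_2/S_*$ are closed in $S^*/S_*$ by LCA(2)-3, so their intersection is closed, and apply LCA(2)-4 to conclude $W_1\cap W_2\in{\cal U}(G)$. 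Combining this with Mor2 gives the strengthened form: for every $S'\in{\cal U}(G')$ and every $W\in{\cal U}(G)$ there is $S\in{\cal U}(G)$ with $S\subset W$ and $f(S)\subset S'$.

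For Mor4, fix $S_1\subset S_2$ in ${\cal U}(G)$ and $S_1'\subset S_2'$ in ${\cal U}(G')$ with $f(S_i)\subset S_i'$, and let $\bar f\colon S_2/S_1\to S_2'/S_1'$ be induced by $f$; the goal is continuity of $\bar f$. The strategy is to sandwich the data between a ${\cal P}$-pair and a ${\cal P}'$-pair and invoke (3). Concretely, I aim to produce $(S,T)\in{\cal P}$ and $(S',T')\in{\cal P}'$ with
\[
S\subset S_1\subset S_2\subset T,\qquad S'\subset S_1'\subset S_2'\subset T',\qquad f(S)\subset S',\qquad f(T)\subset T'.
\]
Granting these, (3) makes $g\colon T/S\to T'/S'$, $x+S\mapsto f(x)+S'$, a morphism in LCA. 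Since $f(S_2)\subset S_2'$, the map $g$ carries the closed subgroup $S_2/S\subset T/S$ into $S_2'/S'\subset T'/S'$, so its restriction stays continuous for the subspace topologies; and since $f(S_1)\subset S_1'$ it descends to $(S_2/S)/(S_1/S)\to(S_2'/S')/(S_1'/S')$, which is exactly $\bar f$. Because the LCA structures on $S_2/S_1$ and $S_2'/S_1'$ coincide with these subquotient topologies (this is what LCA(2)-3 guarantees), continuity of $g$ forces continuity of $\bar f$. To build the pairs I would first take $(S_0,T_0)\in{\cal P}$ sandwiching $\{S_1,S_2\}$, use Mor3 to choose $B'\in{\cal U}(G')$ with $f(T_0)\subset B'$, and take $(S',T')\in{\cal P}'$ sandwiching $\{S_1',S_2',B'\}$, which secures $S'\subset S_1'$, $S_2'\subset T'$ and $f(T_0)\subset T'$; then, using the strengthened Mor2, pick $P\subset S_1$ with $f(P)\subset S'$ and take $(S,T)\in{\cal P}$ sandwiching $\{P,S_1,S_2\}$, so that $f(S)\subset f(P)\subset S'$.

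The hard part, and the crux of the lemma, is that shrinking the source small component $S$ in the last step may enlarge the source large component $T$ beyond $T_0$, so the arranged inclusion $f(T)\subset T'$ can break, while re-enlarging $T'$ would force a smaller $S'$ and destroy $f(S)\subset S'$: the small ends must be matched ``target-first'' and the large ends ``source-first'', and a single pair of pairs must satisfy both couplings at once. The route I expect to resolve this is to recall that continuity of $\bar f$ is a condition local to the identity, so that the large components are ultimately irrelevant. Using LCA(2)-4, the closed subgroups of $S_2/S_1$ and of $S_2'/S_1'$ correspond to members of ${\cal U}(G)$ and ${\cal U}(G')$ lying in the fixed windows $[S_1,S_2]$ and $[S_1',S_2']$, giving a cofinal system of subquotients on which continuity may be tested; on each such test the large ends can be matched by Mor3 together with the cofinality of ${\cal P},{\cal P}'$, and the small ends by Mor2 together with closure of ${\cal U}(G)$ under intersection (refining $S$ inside $f^{-1}(S')\cap S_1$). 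Carrying out this reconciliation, so that the enlargement of $T$ is absorbed without disturbing $f(S)\subset S'$, is the single step demanding genuine care; the remainder is formal bookkeeping with the cofinality of ${\cal P},{\cal P}'$ and the axioms Mor2 and Mor3 established above.
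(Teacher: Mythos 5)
Your overall architecture is sound and in fact coincides with the paper's: Mor1--Mor3 exactly as you argue, and once a matched sandwiching quadruple $(S,T)\in{\cal P}$, $(S',T')\in{\cal P}'$ with $S\subset S_1\subset S_2\subset T$, $S'\subset S_1'\subset S_2'\subset T'$, $f(S)\subset S'$, $f(T)\subset T'$ is in hand, your restrict-then-descend argument (using that LCA(2)-3 makes the LCA structures on $S_2/S_1$ and $S_2'/S_1'$ the subquotient ones inside $T/S$, $T'/S'$) is the paper's three-diagram chase compressed into one step. The genuine gap is that you never produce the quadruple: your final paragraph correctly isolates the circular coupling --- the small ends must be matched target-first, the large ends source-first, while each side's two ends are bound together in a single ${\cal P}$- or ${\cal P}'$-pair --- but then only announces a hoped-for resolution via ``continuity is local at the identity, test on a cofinal system of subquotients,'' and that route cannot close the gap as stated. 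Hypothesis (3) can only ever be invoked on a matched ${\cal P}\times{\cal P}'$ quadruple, so every test subquotient re-poses the identical coupling problem, and for a bare net ${\cal P}$ matched quadruples need not exist at all. For instance take $G=G'={\Bbb R}((t))$ with ${\cal B}={\cal B}'=\{t^n{\Bbb R}[[t]]\}$, let ${\cal P}=\{(t^{2n}{\Bbb R}[[t]],\,t^{-2n}{\Bbb R}[[t]])\,:\,n\geq 0\}$ and ${\cal P}'=\{(t^{2m+1}{\Bbb R}[[t]],\,t^{-2m-1}{\Bbb R}[[t]])\,:\,m\geq 0\}$ (both are nets in the lemma's sense), and let $f$ act coefficientwise by a discontinuous additive map $\phi:{\Bbb R}\to{\Bbb R}$: then (1) and (2) hold, (3) is vacuous since matching would force $2n\geq 2m+1\geq 2n$, yet Mor4 fails on the window $t{\Bbb R}[[t]]\subset{\Bbb R}[[t]]$. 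So no localization trick can succeed at the level of generality you (and, read literally, the lemma) are working at; the coupling is only breakable when ${\cal P}$ and ${\cal P}'$ permit refining the two ends of a pair independently.

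For comparison, the paper's own proof simply asserts ``we can find'' such a quadruple without construction; this is harmless in its applications (Lemma \ref{lemma3.4} and beyond), where ${\cal P}$ consists of \emph{all} pairs $S\subset T$ drawn from a basis, so the ends decouple completely --- and under that reading your own ``target-first for small ends, source-first for large ends'' ordering actually finishes the proof, with no reconciliation step needed: pick $S'\in{\cal B}'$ with $S'\subset S_1'$; by (1) pick $W$ with $f(W)\subset S'$ and, using your (correct) closure of ${\cal U}(G)$ under finite intersections, pick $S\in{\cal B}$ with $S\subset W\cap S_1$; pick $T\in{\cal B}$ above a common ${\cal U}(G)$-bound of $S$ and $S_2$ (LCA(2)-2), so $(S,T)\in{\cal P}$; by (2) pick $V'$ with $f(T)\subset V'$, and finally $T'\in{\cal B}'$ above a common bound of $S'$, $S_2'$, $V'$, so $(S',T')\in{\cal P}'$. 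All six required inclusions hold, and your diagram argument takes over. So the fix is not a cleverer localization but an honest strengthening of the hypothesis on ${\cal P},{\cal P}'$ (e.g.\ that whenever $S_0\subset S\subset T\subset T_0$ with $(S,T)\in{\cal P}$ and $S_0,T_0$ occurring as ends of ${\cal P}$-pairs, one has $(S_0,T_0)\in{\cal P}$, or simply ${\cal P}=\{(S,T)\in{\cal B}^2 : S\subset T\}$), which is satisfied in every use the paper makes of the lemma.
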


\begin{proof}  It is clear that $f$ satisfies  Mor1,  Mor2 and  Mor3.
    For $S_1 , S_2 , S_1' , S_2'$ as in  Mor4,   we can find
      $ ( S , T) \in {\cal P}$ and $ (S' , T' ) \in {\cal P}'$ such that
    \[  S \subset  S_1 \subset S_2 \subset T , \; \; \; \;   S' \subset  S_1' \subset S_2' \subset T' , \; \; \;  f ( S) \subset S' ,  f ( T) \subset T' .\]
    Consider the following commutative diagram in the category of abelian groups for $ i = 1, 2$,
\begin{displaymath}
\xymatrix{
  S_i / S   \ar[r]  \ar[d]  &  T / S     \ar[d]    \\
  {S_i}' /S'  \ar[r]  &   T' / S'
}
\end{displaymath}
  The two horizontal arrows are admissible monics (i.e., closed embeddings  of LCA groups), and the right vertical arrow is a morphism in LCA by our condition.
  It follows that the left vertical arrow is also a morphism in LCA.
 Now consider the commutative diagram for $i=1 , 2$,
 \begin{displaymath}
\xymatrix{
 0 \ar[r] &   S_i/ S  \ar[r] \ar[d] &    T / S   \ar[r]  \ar[d]  &  T / S_i    \ar [r]  \ar[d] & 0    \\
   0 \ar[r] &   {S_i}'/ S'  \ar[r]  &    T' / S'   \ar[r]    &  T' / {S_i}'    \ar [r]   & 0
 }
\end{displaymath}
 The two horizontal rows are exact in LCA,  and the first two vertical arrow  are morphisms in LCA. It follows that the third vertical arrow is a morphism in LCA.
 Then we can use the diagram
  \begin{displaymath}
\xymatrix{
 0 \ar[r] &  S_2/ S_1  \ar[r] \ar[d] &    T / S_1   \ar[r]  \ar[d]  &  T / S_2    \ar [r]  \ar[d] & 0    \\
   0 \ar[r] &   {S_2}'/ {S_1}'  \ar[r]  &    T' / {S_1}'   \ar[r]    &  T' / {S_2}'    \ar [r]   & 0
 }
\end{displaymath}
 to prove that $S_2/ S_1 \to  {S_2}'/ {S_1}' $ is a morphism in LCA.
\end{proof}

Lemma \ref{lemma3.3} implies that

\begin{lemma}\label{lemma3.4}   Let $G, G'$ be objects in LCA(2). Assume that ${\cal B}$ (resp. ${\cal B}'$) is a basis for $G$ (resp. $G'$).
  A group homomorphism $f : G \to G'$ is a morphism in LCA(2) iff it satisfies the following conditions:
\newline (1)  For every $  S' \in {\cal B}'  $, there is $S\in  {\cal B} $ such that $ f ( S ) \subset S'$.
\newline (2)  For every $  S \in {\cal B}  $, there is $S'\in  {\cal B}' $ such that $ f ( S ) \subset S'$.
\newline (3)  For $S_1 , S_2 \in  {\cal B}$ with $S_1 \subset S_2 $ and for $S_1' , S_2' \in  {\cal B}' $ with $S_1' \subset S_2' $,
 if $f ( S_1 ) \subset S_1' $ and $f(S_2 ) \subset S_2 '$, the induced map
   \[    S_2 / S_1 \to   S_2' / S_1' \]
 is a morphism in LCA.
\end{lemma}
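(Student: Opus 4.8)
The plan is to read off this equivalence directly from Lemma \ref{lemma3.3}, which already furnishes the substantive ``sufficiency'' statement in a more flexible form, while the ``necessity'' direction is a routine unwinding of the morphism axioms Mor1--Mor4 against the definition of a basis. I would therefore organize the proof as the two implications of the stated ``iff'', handling the forward direction by hand and the converse by specializing Lemma \ref{lemma3.3}.

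For the forward direction, suppose $f$ is a morphism in LCA(2). Since ${\cal B}\subset{\cal U}(G)$ and ${\cal B}'\subset{\cal U}(G')$, condition (3) is literally the restriction of Mor4 to members of the two bases, so it holds with nothing to prove. For (1), given $S'\in{\cal B}'\subset{\cal U}(G')$, Mor2 produces some $S_0\in{\cal U}(G)$ with $f(S_0)\subset S'$; the basis property then yields $S\in{\cal B}$ with $S\subset S_0$, so that $f(S)\subset f(S_0)\subset S'$. Condition (2) is symmetric: for $S\in{\cal B}$, Mor3 gives $S_0'\in{\cal U}(G')$ with $f(S)\subset S_0'$, and the basis property provides $S'\in{\cal B}'$ with $S_0'\subset S'$, whence $f(S)\subset S'$.

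For the converse, assume (1)--(3) and invoke Lemma \ref{lemma3.3} with the auxiliary sets
\[
{\cal P}=\{(S,T)\in{\cal B}\times{\cal B}\ :\ S\subset T\},\qquad
{\cal P}'=\{(S',T')\in{\cal B}'\times{\cal B}'\ :\ S'\subset T'\}.
\]
With this choice, conditions (1) and (2) of Lemma \ref{lemma3.3} are exactly conditions (1) and (2) here, and condition (3) of Lemma \ref{lemma3.3} for pairs drawn from ${\cal P},{\cal P}'$ is precisely condition (3) here (identifying $S=S_1$, $T=S_2$, $S'=S_1'$, $T'=S_2'$). Hence the only hypothesis of Lemma \ref{lemma3.3} that still requires checking is the sandwiching property of ${\cal P}$ and ${\cal P}'$: that any finitely many $S_1,\dots,S_n\in{\cal U}(G)$ are sandwiched by a single pair $(S,T)\in{\cal P}$, i.e. $S\subset S_i\subset T$ for all $i$.

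This sandwiching step is the one genuinely load-bearing point, though it is elementary. I would first upgrade axiom LCA(2)-2 from two subgroups to finitely many by a straightforward induction, producing $P,Q\in{\cal U}(G)$ with $P\subset S_i\subset Q$ for every $i$; then the basis property supplies $S\in{\cal B}$ with $S\subset P$ and $T\in{\cal B}$ with $Q\subset T$, giving $S\subset P\subset S_i\subset Q\subset T$ and in particular $S\subset T$, so $(S,T)\in{\cal P}$ works (and symmetrically for ${\cal P}'$). With the sandwiching property established, Lemma \ref{lemma3.3} applies and concludes that $f$ is a morphism in LCA(2), completing the converse and hence the equivalence.
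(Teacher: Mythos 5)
Your proposal is correct and takes essentially the same route as the paper, which derives this lemma directly from Lemma \ref{lemma3.3} (the statement is introduced with nothing more than ``Lemma \ref{lemma3.3} implies that''), the intended specialization being exactly yours: take ${\cal P}$ and ${\cal P}'$ to be the nested pairs of basis elements. Your explicit checks --- the finite sandwiching property via iterated LCA(2)-2 together with the basis property, and the routine necessity direction from Mor2--Mor4 --- merely fill in details the paper leaves implicit.
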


\noindent {\bf Example 3.1.3.}  Let $k$ be a local field.  As in Example 3.1.2, $k((t))^n $ has an LCA(2) structure with a basis $\{  t^i k [[t]]^n \, | \, i \in {\Bbb Z}  \} $.
  Let $ f :  k((t))^m  \to k (( t ))^n $ be a $k(( t))$-linear map given by the $n \times m $ matrix $ (t_{ij})\in M_{n,m} ( k(( t))) $.
 There exists $N$ such that $  t^N  ( t_{ij} ) \in M_{n , m } ( F[[t]] ) $, hence   $f (   t^i k[[ t]]^m ) \subset   t^{i-N} k [[ t]]^n $.
  From this we see that the conditions (1) and (2) in Lemma \ref{lemma3.4} are satisfied.  The condition (3)
  is also satisfied since the map  $ \bar f :  S_2 / S_1 \to   S_2' / S_1' $ is a $k$-linear map of finite dimensional $k$-spaces, hence continuous.
 Therefore $f$ is a morphism in LCA(2) by Lemma \ref{lemma3.4}.      In particular  $ GL_n ( k(( t)))$ acts on $ k((t))^n$ as automorphisms in LCA(2).
        Thus we obtain a functor from    $k(( t))$-Mod,   the category of finite dimensional vector spaces over field $k((t))$, to LCA(2).

\begin{lemma}\label{lemma3.5}   The biproduct exists
for every pair of objects in LCA(2).
\end{lemma}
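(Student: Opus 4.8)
The plan is to construct the biproduct of two LCA(2) groups $G$ and $G'$ explicitly as the direct product $G \times G'$ equipped with a natural class of subgroups, and then verify the biproduct identities using the projection and inclusion maps. Given bases $\mathcal{B}$ for $G$ and $\mathcal{B}'$ for $G'$, I would take the candidate basis
\[
\mathcal{B}'' = \{\, S \times S' \mid S \in \mathcal{B},\ S' \in \mathcal{B}' \,\}
\]
of subgroups of $G \times G'$. For $S_1 \times S_1' \subset S_2 \times S_2'$ in $\mathcal{B}''$, the quotient is $(S_2/S_1) \times (S_2'/S_1')$, which carries the product LCA structure. The first step is to check that $\mathcal{B}''$ satisfies the hypotheses LCA(2)-1, LCA(2)-2, LCA(2)-3 of Lemma \ref{lemma3.1}: conditions 1 and 2 follow coordinatewise from those for $\mathcal{B}$ and $\mathcal{B}'$, while condition 3 reduces to the fact that a product of exact triples in LCA is again an exact triple. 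Lemma \ref{lemma3.1} then produces a unique LCA(2) structure $\mathcal{U}''$ on $G \times G'$ having $\mathcal{B}''$ as a basis.

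The second step is to verify that the four structure maps are morphisms in LCA(2) and satisfy the biproduct identities. The projections $p_1 : G \times G' \to G$, $p_2 : G \times G' \to G'$ and inclusions $i_1 : G \to G \times G'$, $i_2 : G' \to G \times G'$ are the obvious ones. To check each is a morphism I would apply Lemma \ref{lemma3.4} using the bases $\mathcal{B}, \mathcal{B}', \mathcal{B}''$: conditions (1) and (2) are immediate, since $p_1(S \times S') = S$ and $i_1(S) = S \times \{0\} \subset S \times S'$ whenever $0 \in S'$ (which holds as $S' \in \mathcal{B}'$ is a subgroup), and condition (3) holds because the induced maps on subquotients are the coordinate projections or inclusions of finite products of LCA groups, hence continuous. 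The algebraic identities $p_1 i_1 = 1_G$, $p_2 i_2 = 1_{G'}$, and $i_1 p_1 + i_2 p_2 = 1_{G \times G'}$ are verified at the level of the underlying abelian groups, where they are standard.

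The main subtlety, and the step I would be most careful about, is condition (3) of Lemma \ref{lemma3.1} for $\mathcal{B}''$, namely that for $S_1 \times S_1' \subset S_2 \times S_2' \subset S_3 \times S_3'$ the sequence
\[
\frac{S_2 \times S_2'}{S_1 \times S_1'} \;\to\; \frac{S_3 \times S_3'}{S_1 \times S_1'} \;\to\; \frac{S_3 \times S_3'}{S_2 \times S_2'}
\]
is an exact triple in LCA. This identifies with the product of the two exact triples coming from $\mathcal{B}$ and $\mathcal{B}'$ individually, so the claim is that a finite product of exact triples in LCA is an exact triple. This holds because a product of closed embeddings is a closed embedding and a product of open quotient maps inducing isomorphisms on quotients again does so, using that the product topology makes $\bigl((S_2/S_1)\times(S_2'/S_1')\bigr)$ the quotient of $(S_3/S_1)\times(S_3'/S_1')$ by the closed subgroup $(S_2/S_1)\times(S_2'/S_1')$. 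One should also note that $\mathcal{B}$ and $\mathcal{B}'$ are merely bases rather than the full classes $\mathcal{U}, \mathcal{U}'$, but Lemma \ref{lemma3.1} is stated precisely to handle this, so it suffices to work with the bases throughout. With these points checked, $G \times G'$ with the maps $p_1, p_2, i_1, i_2$ is the desired biproduct, completing the proof.
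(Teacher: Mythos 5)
Your proof is correct and takes essentially the same route as the paper: the paper's sketch likewise equips $G\times H$ with the product family $\{\,U\times V \mid U\in {\cal U}(G),\ V\in {\cal U}(H)\,\}$ (using the full classes rather than chosen bases, an immaterial difference since the full class is itself a basis), invokes Lemma \ref{lemma3.1} to obtain the LCA(2) structure, and then notes that $p_1,p_2,i_1,i_2$ are morphisms satisfying the biproduct identities. The details you supply --- that a product of exact triples in LCA is exact, and the verification of the morphism conditions via Lemma \ref{lemma3.4} --- are precisely the steps the paper's sketch leaves implicit, and they are carried out correctly.
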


\begin{proof} (sketch)  Let $(G , {\cal U } ( G ) ) $ and $(H , {\cal U } ( H ) ) $ be objects in LCA(2).
   Let $ G\times H$ be the direct sum of $G$ and $H$. The family of subgroups
    \[ {\cal U} = \{   U \times V \; | \;  U \in {\cal U } ( G ) , V \in {\cal U } ( H ) \} \]
satisfies the conditions in Lemma \ref{lemma3.1}, so we have an LCA(2) structure on $A\times B $ with ${\cal U}$ as a basis.
 We have the obvious diagram
 \[   G  { {  \stackrel { p_1}  { \longleftarrow }} \atop  { {\longrightarrow} \atop { i_1} } }   G \times H  { {  \stackrel { p_2}  { \longrightarrow }}
 \atop  { {\longleftarrow} \atop { i_2} } }   H  \]
with arrows $p_1 , p_2 , i_1 , i_2 $ satisfying the identities
\[   p_1 i_1 = 1_A , \; \; \; p_2 i_2 = 1_B, \; \; \;  i_1 p_1 + i_2 p_2 = 1_C .\]
It is easy to see all the four maps are morphisms in LCA(2).
\end{proof}

One can easily verify that LCA(2) is an $Ab$-category.
Hence from the last lemma it follows that LCA(2) is an additive category.

\noindent {\bf Example 3.1.4.}  If $G \in {\rm LCA}$, then we can view $G$ as an LCA(2) group with ${\cal U} ( G ) $ as the set of
 all closed subgroup groups. The two-element set  $\{ \{e \} ,   G \}$ is a basis for $( G ,  {\cal U} ( G ) )$.
 For $G_1 , G_2 \in {\rm LCA}$,   a map $ f :   G_1 \to G_2 $ is a morphism in LCA iff it is a morphism in LCA(2).
 Therefore LCA is a full subcategory of LCA(2).

\subsection{\bf Exact category structure of LCA(2)} In this section we introduce a class of admissible monics and admissible epics in LCA(2), which gives rise to an exact category structure. The proof consists of a long series of lemmas, and we
strongly recommend the readers to skip over them to the main result Theorem \ref{exact} at  first reading.

A morphism $f : G_1 \to G_2$ in LCA(2) is called an {\bf admissible monic} if
\newline
M1.  $f$ is a monic of abelian groups (so $G_1$ is a subgroup of $G_2$).
\newline  M2. $  \bigcap_{S\in {\cal U } ( G_2) } ( G_1 + S ) = G_1 $.
\newline M3.  There is a basis $ {\cal B} $ of $G_2$ such that $ \{ S \cap G_1 \, | \,  S\in {\cal B} \}$ is a basis for $G_1$, and
  for any $S_1 \subset S_2$ in ${\cal B}$ the map $  S_2 \cap G_1 /  S_1 \cap G_1 \to S_2 / S_1 $ is an admissible
 monic in LCA, i.e., a closed embedding.

\begin{lemma}\label{lemma3.6}  Let $f : G_1 \to G_2$ be an admissible  monic in LCA(2). Then there exists a basis ${\cal B}$ of $G_2$ such that
M3 holds for ${\cal B}$ and $ \{ S \cap G_1 \, | \,  S\in {\cal B} \} =  {\cal U} ( G_1 )$.
\end{lemma}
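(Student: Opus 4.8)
The plan is to start from the basis ${\cal B}_0$ furnished by M3 and to enlarge it so that intersecting with $G_1$ hits \emph{every} member of ${\cal U}(G_1)$, while keeping M3 valid. The engine for producing the new elements is axiom LCA(2)-4, combined with the closed-embedding property that M3 already guarantees for ${\cal B}_0$.

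First I would realize each $U \in {\cal U}(G_1)$ as an intersection $S\cap G_1$. Since $\{S\cap G_1 : S\in {\cal B}_0\}$ is a basis of $G_1$, and using LCA(2)-2 to nest the resulting bounds inside $G_2$, I can find $S_1\subset S_2$ in ${\cal B}_0$ with $S_1\cap G_1\subset U\subset S_2\cap G_1$. By the LCA(2) structure on $G_1$ the group $U/(S_1\cap G_1)$ is a closed subgroup of $(S_2\cap G_1)/(S_1\cap G_1)$; by M3 the latter embeds as a closed subgroup of $S_2/S_1$, so the image $\bar U$ of $U/(S_1\cap G_1)$ is a closed subgroup of $S_2/S_1$. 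Writing $\pi:S_2\to S_2/S_1$, axiom LCA(2)-4 yields $S_U:=\pi^{-1}(\bar U)\in {\cal U}(G_2)$, and a direct computation using $S_1\cap G_1\subset U$ gives $S_U\cap G_1=U$. I then set ${\cal B}:={\cal B}_0\cup\{S_U : U\in {\cal U}(G_1)\}$. Since ${\cal B}\supset {\cal B}_0$ it is again a basis of $G_2$, and $\{S\cap G_1 : S\in {\cal B}\}={\cal U}(G_1)$: the inclusion $\subset$ holds because $\{S\cap G_1 : S\in {\cal B}_0\}$ is already a subfamily of ${\cal U}(G_1)$ and $S_U\cap G_1=U\in{\cal U}(G_1)$, while $\supset$ is exactly the realization $U=S_U\cap G_1$.

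The remaining, and main, point is to verify M3 for ${\cal B}$: for every nested pair $S\subset S'$ in ${\cal B}$ the induced map $(S'\cap G_1)/(S\cap G_1)\to S'/S$ must be a closed embedding. I would sandwich the pair as $T_1\subset S\subset S'\subset T_2$ with $T_1,T_2\in{\cal B}_0$ and pass to the LCA group $Q=T_2/T_1$. Writing $A=S/T_1$, $B=S'/T_1$ (closed subgroups of $Q$) and $\bar C$ for the image of $(T_2\cap G_1)/(T_1\cap G_1)$ ---a closed subgroup of $Q$ by M3 for ${\cal B}_0$--- one identifies the map in question with $D/(D\cap A)\to B/A$, where $D=\bar C\cap B$ and $D\cap A=\bar C\cap A$ are the images of $S'\cap G_1$ and $S\cap G_1$. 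The Dedekind modular law gives $D+A=(\bar C+A)\cap B$, so the closed-embedding assertion reduces to showing that $\bar C+A$ is closed in $Q$, equivalently that $(G_1\cap T_2)+S$ is closed in $T_2$, i.e. that the image of $G_1\cap T_2$ in the LCA group $T_2/S$ is closed.

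This last closedness is where I expect the real difficulty to lie, and where axiom M2 must enter. Indeed M2 asserts precisely that $G_1=\bigcap_{R\in{\cal U}(G_2)}(G_1+R)$, that is, that $G_1$ is ``closed'' for the second-level topology; this is the input that forces the image of $G_1\cap T_2$ in each quotient $T_2/S$ to be closed, and then the open mapping theorem for locally compact groups upgrades the continuous injection $(S'\cap G_1)/(S\cap G_1)\to S'/S$ to a closed embedding. Since the sum of two closed subgroups of an LCA group need not be closed, this step cannot be purely formal: ruling out that pathology is exactly the purpose of M2. Once the closedness is established, the modular-law reduction delivers M3 for every pair in ${\cal B}$, which together with the construction above completes the proof.
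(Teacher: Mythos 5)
Your construction of the new subgroups coincides with the paper's: axiom LCA(2)-4 applied to the closed image of $U/(S_1\cap G_1)$ in $S_2/S_1$ produces $S_U=\pi^{-1}(\bar U)=U+S_1$ with $S_U\cap G_1=U$ (the paper does exactly this inside Lemma \ref{lemma3.7}, adjoining one element at a time and then invoking Zorn's lemma instead of adding all $S_U$ at once; that difference is harmless). The gap is in your verification of M3, and it is twofold. First, your reduction is too weak: closedness of $\bar C+A$ in $Q$ only shows that the image of $(S'\cap G_1)/(S\cap G_1)$ in $S'/S$ is a closed subgroup, whereas a closed embedding must in addition be a homeomorphism onto its image, and the topological second isomorphism $D/(D\cap A)\to (D+A)/A$ can fail to be open even when $D+A$ is closed. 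For instance, in $Q=\mathbb{R}_d\times\mathbb{R}$ (first factor the reals with the discrete topology), with $D=\mathbb{R}_d\times\{0\}$ and $A$ the diagonal, one has $D+A=Q$ closed and $D\cap A=0$, yet $D\to Q/A\cong\mathbb{R}$ is the continuous non-open bijection $\mathbb{R}_d\to\mathbb{R}$. Your appeal to the open mapping theorem does not repair this: that theorem requires $\sigma$-compactness of the source, which a general LCA group need not have.

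Second, the step you yourself flag as the crux --- closedness of the image of $G_1\cap T_2$ in $T_2/S$ --- is left unproved, and your diagnosis that M2 supplies it is a misattribution. M2 is quantified over all of ${\cal U}(G_2)$, is independent of any choice of basis (so it is trivially inherited by your enlarged family, as the paper notes in one line in Lemma \ref{lemma3.7}), and it says nothing about a single quotient $T_2/S$; it plays no role in the paper's closedness arguments. What actually yields the closedness in your configuration is the identity $S_U=U+S_1$ together with M3 for the original basis: modulo $T_1$ one gets $\bar C+A=\bar C+(S_1/T_1)$, and closedness of this sum is exactly M3 for ${\cal B}_0$ applied to the pair $S_1\subset T_2$, since the image of $G_1\cap T_2$ in $T_2/S_1$ is the image of the closed embedding $(G_1\cap T_2)/(G_1\cap S_1)\to T_2/S_1$. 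The paper sidesteps both pitfalls by different bookkeeping: in Lemma \ref{lemma3.7} the adjoined $S$ is chosen so that $(S\cap G_1)/(S_1\cap G_1)\to S/S_1$ is an \emph{isomorphism}, and M3 for every pair involving $S$ is then propagated through commutative diagrams of exact triples in LCA, using the exact-category fact from \cite{B1} that a morphism of exact triples whose outer vertical arrows are admissible monics has admissible-monic middle arrow; this chase tracks the admissible-monic (closed embedding) property itself, so sums of closed subgroups and the second isomorphism theorem are never needed, and Zorn's lemma then handles pairs of two new elements automatically. To salvage your write-up you would need to replace the ``closed sum'' reduction by such a diagram argument (or otherwise establish openness onto the image), and replace the appeal to M2 by the observation $S_U=U+S_1$ combined with M3 for ${\cal B}_0$.
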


This lemma will be proved using the Zorn's lemma and the following

\begin{lemma}\label{lemma3.7}  Let $f : G_1 \to G_2$ be a morphism in LCA(2), and ${\cal B}$ be a basis of $G_2$. Suppose that
 $(f , {\cal B})$ satisfies M1,  M2 and M3.  For $S_1 \subset S_2$ in ${\cal B}$
  and $ T \in {\cal U} ( G_1) $  satisfying $    S_1 \cap G_1 \subset T \subset S_2\cap G_1 $,  we can always choose
    $S\in {\cal U}(G_2)$ such that  $S_1 \subset S \subset S_2 $ and
 $ S \cap G_1 = T $. Moreover ${\cal B}' = {\cal B} \cup \{ S\} $ is also a basis of $G_2$, and  $(f , {\cal B}')$ satisfies  M1, M2 and M3.
  \end{lemma}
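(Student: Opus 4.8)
The plan is to produce $S$ by the explicit formula $S := T + S_1$ and then to check each clause. Write $A := S_1 \cap G_1$ and $B := S_2 \cap G_1$; these lie in ${\cal U}(G_1)$ because $\{P \cap G_1 : P \in {\cal B}\}$ is a basis of $G_1$ by M3, and $A \subset T \subset B$ by hypothesis. Since $T \subset G_1$ and $S_1 \cap G_1 \subset T$, we have $T \cap S_1 = T \cap (S_1 \cap G_1) = A$, so the projection $\pi : S_2 \to S_2/S_1$ sends $T$ onto $\pi(T) = (T+S_1)/S_1 = S/S_1$ with kernel $A$, identifying $S/S_1$ with $T/A$. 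By LCA(2)-3 for $G_1$ the subgroup $T/A$ is closed in $B/A$, and by M3 for ${\cal B}$ the natural map $\iota : B/A \to S_2/S_1$ is a closed embedding; therefore $\iota(T/A) = S/S_1$ is closed in $S_2/S_1$. Now LCA(2)-4 for $G_2$ shows $S = \pi^{-1}(S/S_1) \in {\cal U}(G_2)$, and visibly $S_1 \subset S \subset S_2$. Finally $S \cap G_1 = T$: the inclusion $T \subset S \cap G_1$ is clear, and if $t + s_1 \in G_1$ with $t \in T$, $s_1 \in S_1$, then $s_1 = (t+s_1) - t \in S_1 \cap G_1 = A \subset T$, whence $t + s_1 \in T$.

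For the second assertion I would first dispatch the formal parts. Enlarging a basis keeps it a basis, so ${\cal B}' = {\cal B} \cup \{S\}$ is a basis of $G_2$ and $\{P \cap G_1 : P \in {\cal B}'\} = \{P \cap G_1 : P \in {\cal B}\} \cup \{T\}$ is a basis of $G_1$; M1 and M2 do not refer to the basis and so hold unchanged. The only real content is the closed-embedding clause of M3 for the new comparable pairs of ${\cal B}'$, namely $(R, S)$ with $R \in {\cal B}$, $R \subset S$, and $(S, R)$ with $R \in {\cal B}$, $S \subset R$; incomparable pairs impose no condition, and pairs inside ${\cal B}$ are already handled.

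Both remaining cases reduce to two standard facts about LCA groups, fed by the closed embeddings that M3 for ${\cal B}$ already provides: a closed embedding carries a closed subgroup homeomorphically onto a closed subgroup of the target, and it descends along a closed subgroup $D' \subset C'$ to a closed embedding $C'/D' \to C/k(D')$. For a pair $(R,S)$ I take the closed embedding $j : (S_2 \cap G_1)/(R \cap G_1) \to S_2/R$ coming from the pair $(R, S_2)$ in ${\cal B}$, restrict it to the closed subgroup $T/(R \cap G_1)$ (closed by LCA(2)-3 for $G_1$), and observe that its image $(T+R)/R$ lies in $S/R$ and is closed in $S_2/R$, hence closed in $S/R$ because $S/R$ is closed in $S_2/R$ by LCA(2)-3 for $G_2$; this is precisely the statement that $(S \cap G_1)/(R \cap G_1) \to S/R$ is a closed embedding. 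For a pair $(S,R)$ I start from the closed embedding $k : (R \cap G_1)/A \to R/S_1$ coming from the pair $(S_1, R)$ in ${\cal B}$, note that the closed subgroup $T/A$ maps onto $S/S_1$ (closed in $R/S_1$ by LCA(2)-3 for $G_2$), and apply the descent fact to the quotient by $T/A$, obtaining that $(R \cap G_1)/(S \cap G_1) = \big((R\cap G_1)/A\big)\big/(T/A) \to (R/S_1)/(S/S_1) = R/S$ is a closed embedding.

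The one place that needs care — the main obstacle — is exactly this verification of M3 for the pairs containing $S$; the rest is formal. Concretely, one must check that the subspace topologies agree along the chain $(T+R)/R \subset S/R \subset S_2/R$ and that a closed embedding really does induce a homeomorphism of quotients onto its image $k(C')/k(D')$. Both are immediate from the definitions of closed embedding and quotient topology in LCA, so no genuinely new input beyond the axioms LCA(2)-3, LCA(2)-4 and the hypothesis M3 for ${\cal B}$ is required.
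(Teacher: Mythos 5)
Your proof is correct, and its two halves relate differently to the paper's. The construction of $S$ is essentially the paper's argument: the paper also invokes LCA(2)-4 to take the preimage in $S_2$ of the image of $T/(S_1\cap G_1)$ under the closed embedding $(S_2\cap G_1)/(S_1\cap G_1)\to S_2/S_1$, which is exactly your $S=T+S_1$ made abstract, and your verification that $S\cap G_1=T$ matches the paper's. Where you genuinely diverge is the verification of M3 for ${\cal B}'$. The paper reduces to pairs containing $S$ and then runs a three-subcase diagram chase for $U_1=S$ (first $U_2=S_2$ via exact triples; then $U_2\supset S_2$ via the exact-category fact, cited from B\"uhler, that in a morphism of exact triples with outer arrows admissible monics the middle arrow is an admissible monic; then general $U_2$ by a two-out-of-three argument), leaving $U_2=S$ as ``similar''. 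You instead dispatch each of the two kinds of new pairs in a single stroke with elementary facts about closed embeddings of topological groups: for $R\subset S$ you restrict the M3 embedding for the pair $(R,S_2)$ to the closed subgroup $T/(R\cap G_1)$ and cut down to $S/R$ using LCA(2)-3 for the chain $R\subset S\subset S_2$; for $S\subset R$ you quotient the M3 embedding for $(S_1,R)$ along $T/A\twoheadrightarrow S/S_1$, using that $S_1\subset S\subset R$. Your route is more uniform (no subcases, and the $U_2=S$ direction is actually written out rather than deferred) and avoids exact-category machinery; the paper's route has the merit that its case lemmas are recycled verbatim for the epic analogue (Lemma \ref{lemma3.11}) and stays inside the exact-structure formalism it is building. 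Two points you use silently and should state: the containment $R\cap G_1\subset T$ (immediate from $R\subset S$ and $S\cap G_1=T$), without which $T/(R\cap G_1)$ does not parse, and the fact that the intrinsic LCA topologies on $S/R$ and $R/S$ coincide with the subspace and quotient topologies your argument manipulates --- both are indeed supplied by LCA(2)-3, as you indicate at the end, so these are gaps of exposition rather than of substance.
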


\begin{proof}
    Since $  S_2 \cap G_1 / S_1 \cap G_1 \to  S_2 / S_1 $ is a closed embedding, by the completeness axiom LCA(2)-4,  we can find $ S\in {\cal U} (G_2) $ with $S_1 \subset S \subset S_2$
    such that $ T / S_1 \cap G_1 \to  S/S_1 $ is an isomorphism.  This map factors through $S\cap G_1/S_1\cap G_1$, from which one deduces that  $ S\cap G_1 = T$.   Let ${\cal B}' = {\cal B} \cup \{ S \} $.
     Clearly ${\cal B}'$ is a basis and $(f, {\cal B}')$ satisfies M1 and  M2.
      We want to prove that ${\cal B}'$ satisfies  M3 as well, i.e., $U_2 \cap G_1  / U_1 \cap G_1 \to U_2/ U_1 $ is a closed embedding for any $U_1 \subset U_2$ in  ${\cal B}'$. It suffices to prove the case that one of $U_1$ and $U_2$ is $S$. Assume first that $U_1=S.$ Then we have three subcases.

 {\bf Case 1.}
   $U_1 = S$ and  $ U_2 = S_2$.  We have the commutative diagram
 \begin{displaymath}
\xymatrix{
  S \cap G_1 /  S_1 \cap G_1   \ar[r]  \ar[d]  &  S_2 \cap G_1 / S_1 \cap G_1   \ar[r]  \ar[d]  &  S_2 \cap G_1 /  S\cap G_1  \ar[d]
    \\
  S/ S_1   \ar[r]  &  S_2  / S_1   \ar[r]  &   S_2 / S
}
\end{displaymath}
   The first vertical arrow is an isomorphism,  the second vertical arrow is a closed embedding,  and the two rows are exact triples in LCA. It follows that the third
    vertical is also a closed embedding.

 {\bf Case 2.}    $U_1 = S$ and  $ U_2 \supset  S_2$. We have the commutative diagram
\begin{displaymath}
\xymatrix{
  S_2 \cap G_1   /  S \cap G_1   \ar[r]  \ar[d]  &  U_2 \cap G_1   / S \cap G_1    \ar[r] \ar[d] & U_2 \cap G_1 / S_2 \cap G_1  \ar[d]
    \\
    S_2 /  S   \ar[r]   &  U_2    / S   \ar[r]   & U_2  / S_2
}
\end{displaymath}
The rows are exact triples in LCA, and the first and third vertical arrows are admissible monics in LCA. By the standard result in exact category (see e.g.
\cite[page 10]{B1}), the middle vertical arrow is an admissible monic as well.

 {\bf Case 3.}   $U_1 = S$ and $S\subset U_2$.  This case is more general than Case 2.
 We choose  $S' \in {\cal B} $ such that $   S \subset U_2 \subset S'$,   $S_2 \subset S'$. We have a
  commutative diagram
\begin{displaymath}
\xymatrix{
  U_2 \cap G_1   /  S \cap G_1   \ar[r]  \ar[d]  &  S' \cap G_1   / S \cap G_1     \ar[d]
    \\
    U_2 /  S   \ar[r]   & S'    / S
}
\end{displaymath}
The two horizontal arrows are closed embeddings. Since $S_2 \subset S'$, by Case 2 the right vertical arrow is also a closed embedding. It follows that the left vertical arrow is
 a closed embedding.

 The case $U_2 = S$
can be handled similarly. \end{proof}

\begin{proof} (of Lemma \ref{lemma3.6})   Let $S$ be the set of bases ${\cal B}$ satisfying M3.
Then ${ S}$ is partially ordered by inclusion.  If $C$ is a totally ordered subset of $S$, then it is clear that $ \bigcup_{ {\cal B} \in {C}} {\cal B} $
 is also in $S$.  This proves that every totally ordered subset of $S$ has a upper bound in $S$. By Zorn's lemma, $S$ has a maximal element which must satisfy the properties in Lemma \ref{lemma3.6} (otherwise we get a contradiction using Lemma \ref{lemma3.7}). \end{proof}

\begin{lemma}\label{lemma3.8} Let $f : G_1 \to G_2$ be an admissible  monic in LCA(2). Then for any basis ${\cal B}_1$ of $G_1$, there exists a basis ${\cal B}_2$ of $G_2$ such that M3 holds for ${\cal B}_2$ and $ \{ S \cap G_1 \; | \;  S\in {\cal B}_2 \}={\cal B}_1$.
\end{lemma}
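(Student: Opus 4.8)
The plan is to produce a single ``large'' basis ${\cal C}^*$ of $G_2$ for which M3 holds, and then to obtain ${\cal B}_2$ simply as the sub-collection ${\cal B}_2 = \{ S \in {\cal C}^* \mid S \cap G_1 \in {\cal B}_1 \}$. The point of this reduction is that two of the three things we must check come for free: the closed-embedding part of M3 is inherited by any sub-collection of ${\cal C}^*$ (a pair $S_1 \subset S_2$ in ${\cal B}_2$ is in particular a pair in ${\cal C}^*$), and the requirement that $\{ S \cap G_1 \mid S \in {\cal B}_2 \}$ be a basis of $G_1$ is handled by arranging this set to equal ${\cal B}_1$, which is a basis by hypothesis. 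Thus the entire content is to guarantee (a) that every $T \in {\cal B}_1 \subset {\cal U}(G_1)$ is realized, i.e. $T = S \cap G_1$ for some $S \in {\cal C}^*$, and (b) that ${\cal B}_2$ is cofinal and coinitial inside ${\cal C}^*$, so that ${\cal B}_2$ is again a basis of $G_2$.

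The main obstacle is (b), and it is exactly where the naive idea --- take ${\cal C}^*$ to be the basis of Lemma \ref{lemma3.6} and pass to the sub-collection --- breaks down: given $S_0 \in {\cal C}^*$ one can find an upper bound $S_2 \in {\cal C}^*$ for it, but there is no reason for $S_2 \cap G_1$ to lie in ${\cal B}_1$; a cofinal family in $G_2$ may have intersections with $G_1$ that overshoot every member of ${\cal B}_1$. The device that repairs this is Lemma \ref{lemma3.7}, which lets us insert, between two comparable basis elements, a subgroup of $G_2$ with any prescribed intermediate intersection with $G_1$. Concretely, given $S_0 \in {\cal C}^*$, I would choose (using that ${\cal B}_1$ is cofinal in ${\cal U}(G_1)$) some $T \in {\cal B}_1$ with $S_0 \cap G_1 \subset T$, then an $S_2 \in {\cal C}^*$ with $S_0 \subset S_2$ and $T \subset S_2 \cap G_1$ (combining LCA(2)-2 with the cofinality of the basis ${\cal C}^*$), and apply Lemma \ref{lemma3.7} to the pair $S_0 \subset S_2$ and the intermediate group $T$ to manufacture $B$ with $S_0 \subset B \subset S_2$ and $B \cap G_1 = T \in {\cal B}_1$. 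The symmetric construction, using coinitiality of ${\cal B}_1$, produces a lower bound.

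To make these inserted subgroups actually belong to ${\cal C}^*$, I would build ${\cal C}^*$ by a Zorn's lemma argument rather than taking it from Lemma \ref{lemma3.6} directly. Let $P$ be the poset, ordered by inclusion, of all bases ${\cal C}$ of $G_2$ for which $(f, {\cal C})$ satisfies M3; it is nonempty because $f$ is an admissible monic, and the union of a chain is again such a basis (cofinality and coinitiality pass to unions, any pair in the union already lies in one member of the chain so the closed-embedding condition survives, and the intersection family, containing a basis of $G_1$, is still a basis). Let ${\cal C}^*$ be a maximal element. Exactly as in the proof of Lemma \ref{lemma3.6}, maximality together with Lemma \ref{lemma3.7} forces $\{ S \cap G_1 \mid S \in {\cal C}^* \} = {\cal U}(G_1)$, which settles (a). Moreover the group $B$ manufactured above satisfies, by Lemma \ref{lemma3.7}, that ${\cal C}^* \cup \{ B \}$ is still a basis with M3, so by maximality $B \in {\cal C}^*$, hence $B \in {\cal B}_2$; this is precisely cofinality (and dually coinitiality) of ${\cal B}_2$ in ${\cal C}^*$, settling (b). Finally ${\cal B}_2$ is a basis of $G_2$: any $S \in {\cal U}(G_2)$ is sandwiched by elements of ${\cal C}^*$, which are in turn sandwiched by elements of ${\cal B}_2$. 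The remaining verifications --- that unions of chains preserve M3, and that the closed-embedding condition and the identity $\{ S \cap G_1 \mid S \in {\cal B}_2 \} = {\cal B}_1$ hold --- are routine, so I expect the only real work to be this insertion-plus-maximality argument for cofinality.
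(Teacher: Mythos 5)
Your proposal is correct and takes essentially the same route as the paper: the paper likewise sets ${\cal B}_2=\{S\in{\cal B}\,|\,S\cap G_1\in{\cal B}_1\}$ for the basis ${\cal B}$ of Lemma \ref{lemma3.6} (the maximal one produced by its Zorn argument) and proves that ${\cal B}_2$ is cofinal and coinitial in ${\cal B}$ by inserting, between comparable members of ${\cal B}$, preimages of closed subgroups with prescribed intersection with $G_1$ --- exactly the insertion device of Lemma \ref{lemma3.7}. Your explicit maximality step is in fact a useful clarification, since the paper's assertion that the inserted subgroup $S_1''$ lies in ${\cal B}$ (hence in ${\cal B}_2$) rests on ${\cal B}$ being the maximal basis from the proof of Lemma \ref{lemma3.6} together with Lemma \ref{lemma3.7}, not merely on the properties stated in Lemma \ref{lemma3.6} itself.
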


\begin{proof} Let ${\cal B}$ be the basis of $G_2$ as in Lemma \ref{lemma3.6}, and let
\[
{\cal B}_2=\{S\in{\cal B}\;|\; S\cap G_1\in{\cal B}_1\}.
\]
Then it suffices to show that ${\cal B}_2$ is a basis. Let $S\in {\cal B}$, and $T=S\cap G_1\in{\cal U}(G_1)$. Choose $T_1, T_2\in {\cal B}_1$ such that $T_1\subset T\subset T_2$. Then there exist $S_1, S_2\in{\cal B}_2$ such that $S_i\cap G_1=T_i$, $i=1, 2$.
Take $S_1'\in{\cal B}$ which is contained in both $S$ and $S_1$, and let $T_1'=S_1'\cap G_1\in{\cal U}(G_1)$. We have closed embeddings
\begin{displaymath}
\xymatrix{
T_1/T_1' \ar[r] & T/T_1' \ar[r] & S/S_1'.
}
\end{displaymath}
Viewing $T_1/T_1'$ as a closed subgroup of $S/S_1'$, we denote its preimage under the projection $S\to S/S_1'$ by $S_1''$. Then we have $S_1''\cap G_1=T_1$, hence $S_1''\in{\cal B}_2$ and $S_1''\subset S$. Similarly one may find $S_2''\in {\cal B}_2$ such that $S\subset S_2''$.
\end{proof}

\begin{lemma}\label{lemma3.9}  The composition of two admissible monics in LCA(2) is also an admissible monic.
\end{lemma}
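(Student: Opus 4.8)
The plan is to take two composable admissible monics $f : G_1 \to G_2$ and $g : G_2 \to G_3$, regard $G_1 \subset G_2 \subset G_3$ as subgroups via M1, and verify M1, M2, M3 for $g\circ f$. Condition M1 is immediate since a composition of injective group homomorphisms is injective. The real work is to produce a single basis of $G_3$ that simultaneously witnesses M3 for $g\circ f$ and remains compatible with the basis data coming from both $f$ and $g$.

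First I would fix, using M3 for $f$ (or Lemma \ref{lemma3.6}), a basis ${\cal B}_2$ of $G_2$ such that $\{T\cap G_1 \mid T\in {\cal B}_2\}$ is a basis of $G_1$ and each $T_2\cap G_1 / T_1\cap G_1 \to T_2/T_1$ is a closed embedding. Then I would apply Lemma \ref{lemma3.8} to the admissible monic $g$ and the basis ${\cal B}_2$ to obtain a basis ${\cal B}_3$ of $G_3$ for which M3 (for $g$) holds and $\{S\cap G_2 \mid S\in {\cal B}_3\} = {\cal B}_2$. This ${\cal B}_3$ is the basis I will use for $g\circ f$. Since $G_1\subset G_2$, we have $S\cap G_1 = (S\cap G_2)\cap G_1$ for every $S\in{\cal B}_3$, so $\{S\cap G_1 \mid S\in{\cal B}_3\}$ equals $\{T\cap G_1 \mid T\in{\cal B}_2\}$, a basis of $G_1$. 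For $S_1\subset S_2$ in ${\cal B}_3$, setting $T_i = S_i\cap G_2\in{\cal B}_2$, the map $S_2\cap G_1 / S_1\cap G_1 \to S_2/S_1$ factors as $T_2\cap G_1 / T_1\cap G_1 \to T_2/T_1 \to S_2/S_1$, where the first arrow is a closed embedding by M3 for $f$ and the second by M3 for $g$; as closed embeddings in LCA are closed under composition (axiom E2 for LCA), the composite is a closed embedding, giving M3.

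For M2, which I expect to be the main obstacle, let $x\in\bigcap_{S\in{\cal U}(G_3)}(G_1 + S)$. Because $G_1 + S\subset G_2 + S$, M2 for $g$ forces $x\in\bigcap_S (G_2 + S) = G_2$. Now fix any $S\in{\cal U}(G_3)$ and write $x = a + s$ with $a\in G_1$, $s\in S$; since $x,a\in G_2$ we get $s = x - a\in S\cap G_2$, hence $x\in G_1 + (S\cap G_2)$. Finally, given an arbitrary $T\in{\cal U}(G_2)$, the fact that ${\cal B}_2 = \{S\cap G_2 \mid S\in{\cal B}_3\}$ is a basis of $G_2$ lets me pick $S\in{\cal B}_3$ with $S\cap G_2\subset T$, whence $x\in G_1 + (S\cap G_2)\subset G_1 + T$. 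Thus $x\in\bigcap_{T\in{\cal U}(G_2)}(G_1 + T) = G_1$ by M2 for $f$, proving M2.

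The key difficulty is that M2 quantifies over all of ${\cal U}(G_3)$ rather than a mere basis, and the intersection/closure conditions do not interact with composition in any obvious way. The device that makes it work is to first trap $x$ inside $G_2$ using M2 for $g$, and then transport the condition down to ${\cal U}(G_2)$ and invoke M2 for $f$; this step relies essentially on ${\cal B}_3$ having been chosen (through Lemma \ref{lemma3.8}) so as to cut out exactly ${\cal B}_2$ on $G_2$, which is also what reconciles the two closed-embedding conditions in the verification of M3.
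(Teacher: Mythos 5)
Your proof is correct and takes essentially the same route as the paper's: M2 is handled by first trapping the element in $G_2$ via M2 for $g$ and then rewriting $(G_1+S)\cap G_2 = G_1+(S\cap G_2)$ so as to invoke M2 for $f$, while M3 is obtained by applying Lemma \ref{lemma3.8} to lift the basis ${\cal B}_2$ of $G_2$ to a basis ${\cal B}_3$ of $G_3$ and composing closed embeddings. Your explicit cofinality step (picking $S\in{\cal B}_3$ with $S\cap G_2\subset T$ for an arbitrary $T\in{\cal U}(G_2)$) merely spells out a point the paper's chain of equalities leaves implicit, so it is a clarification rather than a different argument.
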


\begin{proof} Let $f_1: G_1\to G_2$ and $f_2: G_2\to G_3$ be two admissible monics and let $f=f_2\circ f_1$. We need to show $f$ is an admissible monic. M1 is clear.  To prove M2, first notice that
\[
\bigcap_{S\in {\cal U}(G_3)}(G_1+S)\subset \bigcap_{S\in {\cal U}(G_3)}(G_2+S)=G_2.
\]
Then  we obtain
\[
\bigcap_{S\in {\cal U}(G_3)}(G_1+S)= \bigcap_{S\in {\cal U}(G_3)}( (G_1+S) \bigcap G_2 )   =  \bigcap_{S\in {\cal U}(G_3)}(G_1+S\cap G_2)=G_1.
\]
Let ${\cal B}_2$ be the basis of $G_2$ satisfying M3 such that $\{S\cap G_1\; |\; S\in{\cal B}_2\}={\cal U}(G_1)$. By Lemma \ref{lemma3.8}, we may find a basis ${\cal B}_3$ of $G_3$ satisfying M3 with respect to $f_2$ such that $\{S\cap G_2\;|\; S\in{\cal B}_3\}={\cal B}_2$. Then ${\cal B}_3$ also satisfies M3 with respect to $f$.
\end{proof}

Now we turn to admissible epics. An {\bf admissible epic} in LCA(2)  is a morphism $f : G_1 \to G_2$ such that
 \newline
E1. $f$ is  a surjective map.
 \newline
  E2. There is a basis ${\cal B}$ of $G_1$ such that
$\{  f ( S) \, | \,  S \in {\cal B}\}$ is a basis for $G_2$, and for any $S_1 \subset S_2 $ in ${\cal B}$,  the map
 $ S_2 / S_1  \to f(S_2 ) / f(S_1 ) $ is an admissible epic in LCA.

\begin{lemma}\label{lemma3.10}  Let $f : G_1 \to G_2$ be an admissible epic in LCA(2). Then there exists a basis ${\cal B}$ of $G_1$ such that
 {E2} holds for ${\cal B}$ and $ \{ f(S) \, | \,  S\in {\cal B} \} =  {\cal U} ( G_2 )$.
\end{lemma}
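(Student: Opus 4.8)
My plan is to establish the exact dual of the pair (Lemma~\ref{lemma3.6}, Lemma~\ref{lemma3.7}): first an enlargement lemma for admissible epics, and then a Zorn's lemma argument whose structure copies that of the proof of Lemma~\ref{lemma3.6}. The enlargement lemma I would state is the following. Suppose $(f,{\cal B})$ satisfies E1 and E2, let $S_1\subset S_2$ lie in ${\cal B}$, and let $T\in{\cal U}(G_2)$ satisfy $f(S_1)\subset T\subset f(S_2)$. Then there is $S\in{\cal U}(G_1)$ with $S_1\subset S\subset S_2$ and $f(S)=T$, such that ${\cal B}'={\cal B}\cup\{S\}$ is again a basis and $(f,{\cal B}')$ still satisfies E2.

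First I would construct $S$. By LCA(2)-3 the subgroup $T/f(S_1)$ is a closed subgroup of $f(S_2)/f(S_1)$. Let $\pi\colon S_2\to S_2/S_1$ be the projection and $\bar f\colon S_2/S_1\to f(S_2)/f(S_1)$ the continuous homomorphism induced by $f$, which is an admissible epic by E2. I set $S=\pi^{-1}(\bar f^{-1}(T/f(S_1)))$, which lies in ${\cal U}(G_1)$ by the completeness axiom LCA(2)-4. A direct computation identifies $S=\{x\in S_2\;|\;f(x)\in T\}$, and hence $f(S)=T$ with $S_1\subset S\subset S_2$.

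The crux is verifying that $(f,{\cal B}')$ still satisfies E2. The family $\{f(U)\;|\;U\in{\cal B}'\}$ is visibly still a basis of $G_2$, and for pairs $U_1\subset U_2$ in ${\cal B}'$ not involving $S$ the condition is inherited from ${\cal B}$; so only pairs with $S\in\{U_1,U_2\}$ remain, and these split into the dual analogues of Cases~1--3 in the proof of Lemma~\ref{lemma3.7}. The key topological input is the open-mapping property of the single admissible epic $\bar f$: since restriction to the preimage of a closed subgroup satisfies $\bar f(W\cap\bar f^{-1}(D))=\bar f(W)\cap D$, the restrictions $S/S_1\to T/f(S_1)$ are again admissible epics, and the induced diagonal map $S_2/S\to f(S_2)/T$ is an open continuous bijection, hence an isomorphism in LCA. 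For the off-diagonal pairs I would stack the exact triples of LCA(2)-3 into $3\times3$ commutative diagrams with exact rows and invoke the dual of the exact-category result cited on page~10 of \cite{B1} (if the two outer vertical maps are admissible epics, so is the middle one); the most general configuration is reduced to the sandwiched one by interposing a basis element $S'\in{\cal B}$ with $S_2,U_2\subset S'$, exactly as in Case~3 of Lemma~\ref{lemma3.7}.

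Granting the enlargement lemma, the Zorn argument mirrors the proof of Lemma~\ref{lemma3.6}. Let $\Pi$ be the poset, ordered by inclusion, of bases ${\cal B}$ of $G_1$ for which $(f,{\cal B})$ satisfies E2; unions of chains again lie in $\Pi$, so $\Pi$ has a maximal element ${\cal B}$. If $\{f(S)\;|\;S\in{\cal B}\}\neq{\cal U}(G_2)$, choose $T\in{\cal U}(G_2)$ not of this form. Since $\{f(S)\;|\;S\in{\cal B}\}$ is a basis of $G_2$, there are $U_1,U_2\in{\cal B}$ with $f(U_1)\subset T\subset f(U_2)$; using that ${\cal B}$ is a basis together with LCA(2)-2, I can find $S_1\subset S_2$ in ${\cal B}$ with $S_1\subset U_1$ and $U_2\subset S_2$, so that $f(S_1)\subset T\subset f(S_2)$. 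The enlargement lemma then yields a strictly larger basis in $\Pi$, contradicting maximality. I expect the main obstacle to be precisely the topological bookkeeping of the previous paragraph: one must be sure the algebraically-induced maps are genuine admissible epics in LCA (open surjections inducing homeomorphisms on quotients), not merely continuous surjections, and the cleanest way to secure this is to derive every instance from the open-mapping property of $\bar f$ together with the dual exact-category lemma.
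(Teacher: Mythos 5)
Your proposal is correct and takes essentially the same route as the paper: the paper likewise proves Lemma~\ref{lemma3.10} by Zorn's lemma on the poset of bases satisfying E2 together with a one-element enlargement lemma (its Lemma~\ref{lemma3.11}), in which $S$ is produced exactly as in your construction, via the completeness axiom LCA(2)-4 as the full preimage of $T/f(S_1)$, and E2 is rechecked for ${\cal B}\cup\{S\}$ through the same exact-triple diagrams and the exact-category fact that two outer admissible epics force the middle one, with the general pair reduced to the sandwiched one by interposing $S'\in{\cal B}$. The only difference is bookkeeping: the paper writes out five cases (three with $U_1=S$ and two with $U_2=S$, the latter not being disposed of as quickly as in the monic Lemma~\ref{lemma3.7}) where you invoke the openness of $\bar f$ and the $3\times 3$ reduction wholesale, but the underlying verifications coincide.
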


Similar to  Lemma \ref{lemma3.6}, this lemma follows from Zorn's lemma and the following:

\begin{lemma}\label{lemma3.11}  Let $f : G_1 \to G_2$ be a morphism in LCA(2) satisfying E1 and E2. For every $ T \in {\cal U} ( G_2) $ not in $\{ f(S) \, | \,  S \in  {\cal B} \}$, we can increase ${\cal B}$ to ${\cal B}' = {\cal B} \cup \{ S\} $ by adding one element $S\in {\cal U}(G_1)$ so that
 $ f(S) = T $ and E2 holds for ${\cal B}'$.
 \end{lemma}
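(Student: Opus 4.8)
The plan is to construct $S$ dually to the construction in Lemma \ref{lemma3.7}, as the preimage in $G_1$ of a suitable closed subgroup, and then to verify E2 for ${\cal B}'={\cal B}\cup\{S\}$ by reducing every new subquotient comparison to a single base pair. First I would sandwich $T$: since $\{f(S)\,|\,S\in{\cal B}\}$ is a basis of $G_2$ and $T\in{\cal U}(G_2)$, there are $S_1,S_2\in{\cal B}$ with $f(S_1)\subset T\subset f(S_2)$, and using LCA(2)-2 together with the basis property I may shrink $S_1$ and enlarge $S_2$ so that in addition $S_1\subset S_2$. By E2 the induced map $g:S_2/S_1\to f(S_2)/f(S_1)$ is an admissible epic in LCA, and by LCA(2)-3 applied in $G_2$ the subgroup $T/f(S_1)$ is closed in $f(S_2)/f(S_1)$. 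Hence $g^{-1}(T/f(S_1))$ is a closed subgroup of $S_2/S_1$, and I set $S\in{\cal U}(G_1)$ to be its preimage under $S_2\to S_2/S_1$, which lies in ${\cal U}(G_1)$ by the completeness axiom LCA(2)-4. Unwinding the definition gives $S=S_2\cap f^{-1}(T)$, so that $S_1\subset S\subset S_2$ and, since $g$ is surjective, $f(S)=T$.

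It then remains to check E2 for ${\cal B}'$. That ${\cal B}'$ and $\{f(S')\,|\,S'\in{\cal B}'\}$ are still bases is immediate, since each contains a basis. The real content is that for every comparable pair in ${\cal B}'$ involving $S$, the induced map on subquotients is an admissible epic in LCA; as the other member of such a pair lies in ${\cal B}$, there are exactly two families to treat: $(S,U)$ with $S\subset U\in{\cal B}$, and $(W,S)$ with ${\cal B}\ni W\subset S$. For these I would isolate two elementary facts about LCA: (a) the restriction of an admissible epic to the preimage of a closed subgroup is again an admissible epic onto that subgroup; and (b) given a commutative ladder of LCA groups with exact rows, if the middle vertical arrow is an admissible epic then so is the induced arrow of cokernels. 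Both are openness statements, provable directly by chasing open sets through the quotient maps; (b) is the dual of the exact-category fact cited in Lemma \ref{lemma3.7}.

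Using (a) with the admissible epic $g_W:S_2/W\to f(S_2)/f(W)$, available from E2 whenever $W\in{\cal B}$ and $W\subset S_2$, together with the identity $S=S_2\cap f^{-1}(T)$, I obtain that $S/W\to T/f(W)$ is an admissible epic for every such $W\subset S$; in particular $S/S_1\to T/f(S_1)$ is. The family $(S,U)$ is then handled by applying (b) to the exact triples $S/S_1\to U/S_1\to U/S$ and $T/f(S_1)\to f(U)/f(S_1)\to f(U)/T$ (given by LCA(2)-3), whose middle vertical $U/S_1\to f(U)/f(S_1)$ is an admissible epic by E2; hence the right vertical $U/S\to f(U)/T$ is an admissible epic. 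For the family $(W,S)$ I first reduce a general $W$ to one lying below $S_1$ by choosing, via LCA(2)-2 and the basis property, some ${\cal B}\ni W'\subset W\cap S_1$; fact (a) gives that $S/W'\to T/f(W')$ is an admissible epic, and then (b), applied to the ladder with rows $W/W'\to S/W'\to S/W$ and $f(W)/f(W')\to T/f(W')\to T/f(W)$, yields that $S/W\to T/f(W)$ is an admissible epic. This mirrors the three-case reduction of Lemma \ref{lemma3.7}.

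The main obstacle is purely topological: showing that the newly induced maps are \emph{open}, and not merely surjective group homomorphisms. This is precisely where facts (a) and (b) do the work, propagating the openness and surjectivity of the given $g$ (and of the $g_W$) through the ladders; by contrast, the algebraic bookkeeping, namely that $S=S_2\cap f^{-1}(T)$, that $f(S)=T$, and that the relevant quotients match up, is routine.
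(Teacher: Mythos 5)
Your proposal is correct, and it constructs $S$ exactly as the paper does: sandwich $T$ between $f(S_1)\subset T\subset f(S_2)$ with $S_1\subset S_2$ in ${\cal B}$, pull $T/f(S_1)$ back through the admissible epic $S_2/S_1\to f(S_2)/f(S_1)$, and invoke LCA(2)-4 to get $S=S_2\cap f^{-1}(T)\in{\cal U}(G_1)$ with $f(S)=T$. Where you genuinely diverge is in verifying E2 for ${\cal B}'$. The paper runs a five-case ladder analysis (its Cases 1--5), repeatedly using the exact-category stability fact that two outer admissible epics in a morphism of exact triples force the middle one, plus an enlargement $S'\supset S_2, U_2$ to reduce the general cases. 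You instead isolate two uniform facts: (a) the restriction of an admissible epic to the preimage of a closed subgroup is an admissible epic onto it --- which is precisely axiom E4 of the exact structure on LCA (pullback along a closed embedding), and is also what the paper tacitly uses at the end of its Case 3 when it upgrades "already surjective" to "admissible epic"; and (b) an admissible epic in the middle of a ladder of exact triples induces an admissible epic on cokernels, the dual of the B\"uhler fact cited in Lemma \ref{lemma3.7}, and verifiable in LCA by the open-set chase you describe. With these, each of the two families of new pairs is dispatched in a single step, which is tidier than the paper's case tree; the price is that you must (and do) invoke LCA(2)-3 each time to know that rows such as $T/f(W)\to f(S_2)/f(W)\to f(S_2)/T$ and $S/S_1\to U/S_1\to U/S$ are exact triples with the correct topologies. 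One small redundancy: your closing reduction of a general $W$ to some $W'\subset W\cap S_1$ is unnecessary, since your first application of (a) already covers every $W\in{\cal B}$ with $W\subset S$ --- indeed $f(W)\subset f(S)=T$ automatically, so $S/W=g_W^{-1}(T/f(W))$ holds for all such $W$, not only those below $S_1$. This does no harm, but the argument is complete without it.
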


\begin{proof} Choose $S_1 , S_2 \in {\cal B}$ such that $S_1 \subset S_2$ and $  f(S_1) \subset T \subset f(S_2)$.
    Since $ S_2 / S_1 \to  f(S_2) / f(S_1) $ is an admissible epic, by the completeness axiom LCA(2)-4 we have $S\in {\cal B}$, where $S_1\subset S\subset S_2$ such that $S/S_1$ is the preimage of $T/f(S_1)$, and $ S / S_1 \to  T/f(S_1) $ is an admissible epic.   It is easy to prove that $ f(S) = T$.   Let ${\cal B}' = {\cal B} \cup \{ S \} $. We want to prove that ${\cal B}'$ satisfies E2, i.e., for any $U_1 \subset U_2$ in ${\cal B}'$ the map
 $  U_2   / U_1  \to f(U_2)/ f(U_1) $ is an admissible epic. It suffices to prove the case that one of $U_1$ and $U_2$ is $S$. Assume first that $U_1=S.$

 {\bf Case 1.}
   $U_1 = S$ and  $ U_2 = S_2$.  We have the commutative diagram
 \begin{displaymath}
\xymatrix{
  S /  S_1    \ar[r]  \ar[d]  &  S_2  / S_1   \ar[r]  \ar[d]  &  S_2  /  S  \ar[d]
    \\
  T/ f(S_1)   \ar[r]  &  f(S_2)  / f(S_1)   \ar[r]  &   f(S_2) / T
}
\end{displaymath}
   The first two vertical arrows are admissible epics,  and the two rows are exact triples in LCA. It follows that the third
    vertical is also an admissible epic.

 {\bf Case 2.}    $U_1 = S$ and  $ U_2 \supset  S_2$. We have the commutative diagram
\begin{displaymath}
\xymatrix{
  S_2   /  S   \ar[r]  \ar[d]  &  U_2    / S    \ar[r] \ar[d] & U_2  / S_2   \ar[d]
    \\
    f(S_2) /  T   \ar[r]   &  f(U_2)    / T   \ar[r]   & f(U_2)  / f(S_2)
}
\end{displaymath}
The rows are exact triples in LCA, and the first and third vertical arrows are admissible epics in LCA. Then the middle vertical arrow is also an admissible epic in LCA.

 {\bf Case 3.}   $U_1 = S$ and $S\subset U_2$.
 Choose  $S' \in {\cal B} $ such that $   S \subset U_2 \subset S'$,   $S_2 \subset S'$. We have the
  commutative diagram
\begin{displaymath}
\xymatrix{
  U_2   /  S    \ar[r]  \ar[d]  &  S'    / S     \ar[d]
    \\
    f(U_2) /  T   \ar[r]   & f(S')    / f(S)
}
\end{displaymath}
The two horizontal arrows are admissible monics. Since $S_2 \subset S'$, by Case 2 the right vertical arrow is also an
admissible epic. It follows that the left vertical arrow, which is already a surjective map,  is
 an admissible epic in LCA as well.

 Now assume that $U_2 = S$.

{\bf Case 4.}    $U_1 \subset S_1 $, $U_2 = S_2$. We have the commutative diagram
\begin{displaymath}
\xymatrix{
  S_1   /  U_1   \ar[r]  \ar[d]  &  S    / U_1     \ar[r] \ar[d] &  S  / S_1   \ar[d]
    \\
    f(S_1) /  f(U_1)   \ar[r]   &  f(S)    / f(U_1)    \ar[r]   &   f(S)  / f(S_1)
}
\end{displaymath}
  The two rows are exact in LCA, and the first and third vertical arrows are admissible epics in LCA. Hence the middle vertical arrow is also an admissible epic in LCA.

{\bf Case 5.}    $U_1 \subset U_2 = S_2$. Choose $S' \in {\cal B} $ such that $ S' \subset U_1 $ and $  S' \subset S_1 $. We have the commutative diagram
\begin{displaymath}
\xymatrix{
  U_1    /  S'    \ar[r]  \ar[d]  &  S    / S'     \ar[r] \ar[d] &  S  / U_1   \ar[d]
    \\
    f(U_1) /  f(S')   \ar[r]   &  f(S )    / f(S')    \ar[r]   &   f(S )  / f(U_1)
}
\end{displaymath}
 The two rows are exact in LCA.  The first vertical arrow is an admissible epic in LCA, and the second vertical arrow is also an admissible epic by Case 4, hence
  so is the third.
   \end{proof}

Using the same trick as in the proof of Lemma \ref{lemma3.8} and Lemma \ref{lemma3.9}, one can show
the following two lemmas.

\begin{lemma}\label{lemma3.12}  Let $f : G_1 \to G_2$ be an admissible epic in LCA(2). Then for any basis ${\cal B}_2$ of $G_2$, there exists a basis ${\cal B}_1$ of $G_1$ such that
 E2 holds for ${\cal B}_1$ and $ \{ f(S) \, | \, S\in {\cal B}_1 \} =  {\cal B}_2$.
\end{lemma}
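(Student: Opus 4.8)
The plan is to mirror the proof of Lemma \ref{lemma3.8}, replacing the role of Lemma \ref{lemma3.6} by Lemma \ref{lemma3.10} and the intersections $S\cap G_1$ by the images $f(S)$. First I would invoke Lemma \ref{lemma3.10}, but take the basis ${\cal B}$ of $G_1$ to be \emph{maximal} among those satisfying E2 (as produced by the Zorn's lemma argument there); such a ${\cal B}$ automatically satisfies $\{f(S)\mid S\in{\cal B}\}={\cal U}(G_2)$ and, by Lemma \ref{lemma3.11}, is \emph{saturated}: any subgroup of $G_1$ obtainable from members of ${\cal B}$ by the preimage construction of that lemma already belongs to ${\cal B}$. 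I then set
\[ {\cal B}_1=\{S\in{\cal B}\mid f(S)\in{\cal B}_2\}. \]

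Two of the required properties are then immediate. Since ${\cal B}_1\subseteq{\cal B}$ and E2 is inherited by subfamilies, for any $S_1\subset S_2$ in ${\cal B}_1$ the induced map $S_2/S_1\to f(S_2)/f(S_1)$ is still an admissible epic in LCA, and clearly $\{f(S)\mid S\in{\cal B}_1\}\subseteq{\cal B}_2$; conversely, given $T\in{\cal B}_2\subseteq{\cal U}(G_2)$, the exhausting property of ${\cal B}$ furnishes some $S\in{\cal B}$ with $f(S)=T$, whence $S\in{\cal B}_1$, so that $\{f(S)\mid S\in{\cal B}_1\}={\cal B}_2$. It therefore remains only to prove that ${\cal B}_1$ is a basis of $G_1$, and since ${\cal B}$ is already a basis it suffices to sandwich an arbitrary $S\in{\cal B}$ between two members of ${\cal B}_1$.

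For the upper bound I would choose $T_2\in{\cal B}_2$ with $f(S)\subset T_2$, pick $S_2\in{\cal B}_1$ with $f(S_2)=T_2$, and take $S_2'\in{\cal B}$ containing both $S$ and $S_2$. The admissible epic $\phi:S_2'/S\to f(S_2')/f(S)$ is surjective, and $T_2/f(S)=f(S_2)/f(S)$ is a closed subgroup of $f(S_2')/f(S)$ by axiom LCA(2)-3; so by axiom LCA(2)-4 the subgroup $S''\subset S_2'$ with $S''/S=\phi^{-1}(T_2/f(S))$ lies in ${\cal U}(G_1)$, with $S\subset S''\subset S_2'$, and surjectivity of $\phi$ together with $f(S)\subset f(S'')$ forces $f(S'')=T_2$. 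A symmetric construction, using an $S_1'\in{\cal B}$ lying below both $S$ and some $S_1\in{\cal B}_1$ with $f(S_1)=T_1\subset f(S)$, produces $S'\subset S$ with $f(S')=T_1\in{\cal B}_2$. The one step that is not purely formal is concluding that $S',S''\in{\cal B}$, rather than merely in ${\cal U}(G_1)$: this is exactly where maximality enters, since $S'$ and $S''$ are precisely the subgroups that Lemma \ref{lemma3.11} permits adjoining to ${\cal B}$ while preserving E2, so saturation of ${\cal B}$ gives $S',S''\in{\cal B}$, hence $S',S''\in{\cal B}_1$. This sandwiches $S$ and completes the proof. I expect this saturation/maximality step to be the main obstacle, as it is the only place where more than routine diagram-chasing in the exact category LCA is needed.
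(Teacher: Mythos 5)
Your proof is correct and takes essentially the same route as the paper, which establishes Lemma \ref{lemma3.12} by ``the same trick as in the proof of Lemma \ref{lemma3.8}'': take the maximal basis ${\cal B}$ of $G_1$ produced by the Zorn argument behind Lemma \ref{lemma3.10}, set ${\cal B}_1=\{S\in{\cal B}\mid f(S)\in{\cal B}_2\}$, and sandwich each $S\in{\cal B}$ between members of ${\cal B}_1$ via the preimage construction of Lemma \ref{lemma3.11} (using LCA(2)-3 and LCA(2)-4), exactly as you do. Your explicit appeal to maximality/saturation to conclude $S',S''\in{\cal B}$ is precisely the step the paper leaves implicit in the Lemma \ref{lemma3.8} argument, and it is the right justification.
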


\begin{lemma}\label{lemma3.13} The composition of two admissible epics in LCA(2) is also an admissible epic.
\end{lemma}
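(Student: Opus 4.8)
The plan is to mirror the proof of Lemma \ref{lemma3.9} for admissible monics, replacing Lemma \ref{lemma3.8} with its epic analogue Lemma \ref{lemma3.12} and replacing the closure of closed embeddings under composition with the closure of admissible epics in LCA. Let $f_1 : G_1 \to G_2$ and $f_2 : G_2 \to G_3$ be admissible epics and set $f = f_2 \circ f_1$. First I would dispose of the easy conditions: $f$ is a morphism in LCA(2) by Lemma \ref{lemma3.2}, and E1 holds since a composition of two surjections is surjective. The entire content of the proof therefore lies in producing a single basis of $G_1$ that simultaneously witnesses E2 for $f$.

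To do this I would first invoke Lemma \ref{lemma3.10} to fix a basis ${\cal B}_2$ of $G_2$ satisfying E2 with respect to $f_2$, so that $\{f_2(T)\,|\,T\in{\cal B}_2\}$ is a basis of $G_3$ and each induced quotient map $T_2/T_1 \to f_2(T_2)/f_2(T_1)$ is an admissible epic in LCA. Then I would apply Lemma \ref{lemma3.12} to $f_1$ and this distinguished basis ${\cal B}_2$, which yields a basis ${\cal B}_1$ of $G_1$ such that E2 holds for $f_1$ with respect to ${\cal B}_1$ and, crucially, $\{f_1(S)\,|\,S\in{\cal B}_1\}={\cal B}_2$ exactly. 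The claim is then that ${\cal B}_1$ witnesses E2 for $f$. The image condition is immediate: $\{f(S)\,|\,S\in{\cal B}_1\}=\{f_2(f_1(S))\,|\,S\in{\cal B}_1\}=\{f_2(T)\,|\,T\in{\cal B}_2\}$, a basis of $G_3$. For the quotient condition, given $S_1\subset S_2$ in ${\cal B}_1$, I would set $T_i=f_1(S_i)\in{\cal B}_2$ (noting $T_1\subset T_2$ since $f_1$ is a homomorphism) and observe that the induced map $S_2/S_1 \to f(S_2)/f(S_1)$ factors as the composite $S_2/S_1 \to T_2/T_1 \to f_2(T_2)/f_2(T_1)=f(S_2)/f(S_1)$; the first arrow is an admissible epic by E2 for $(f_1,{\cal B}_1)$, the second by E2 for $(f_2,{\cal B}_2)$, and hence the composite is an admissible epic in LCA because admissible epics there are closed under composition (axiom E2 of the exact structure on LCA).

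The only genuinely subtle point, which I expect to be the main obstacle, is the compatibility of the two chosen bases: the factorization through $T_2/T_1$ requires that $f_1$ carry ${\cal B}_1$ \emph{onto} ${\cal B}_2$, so that each $T_i=f_1(S_i)$ actually lies in the distinguished basis of $G_2$ for which the second quotient map is known to be an admissible epic. This exact matching is precisely what Lemma \ref{lemma3.12} provides, and without it one could not guarantee that the two intermediate quotients are controlled simultaneously. Once this is arranged, everything else is formal and relies only on the exact category structure of LCA already established in Lemma \ref{lemma2.2}.
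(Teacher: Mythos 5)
Your proof is correct and is essentially the paper's own argument: the paper disposes of Lemma \ref{lemma3.13} by remarking that it follows from ``the same trick'' as Lemmas \ref{lemma3.8} and \ref{lemma3.9}, which is precisely your scheme of fixing an E2-basis ${\cal B}_2$ of $G_2$ for $f_2$, invoking Lemma \ref{lemma3.12} to produce a basis ${\cal B}_1$ of $G_1$ with $\{f_1(S)\,|\,S\in{\cal B}_1\}={\cal B}_2$, and then composing the induced admissible epics $S_2/S_1\to f_1(S_2)/f_1(S_1)\to f(S_2)/f(S_1)$ in LCA. One harmless slip: closure of admissible epics in LCA under composition is axiom E2 of the exact structure, which the paper notes is clear in the sketch of the theorem that LCA is exact, not Lemma \ref{lemma2.2} (which establishes E3--E4).
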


\begin{lemma}\label{lemma3.14}
If $f: G_1\to G_2$ is an admissible monic in LCA(2), then the cokernel of $f$ exists and is an admissible epic.
\end{lemma}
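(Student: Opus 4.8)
The plan is to realize the cokernel concretely as the abstract quotient $G_3 := G_2/G_1$ equipped with an LCA(2) structure built from that of $G_2$, and to show that the projection $p : G_2 \to G_3$ is an admissible epic enjoying the universal property of a cokernel. First I would apply Lemma \ref{lemma3.6} to fix a basis ${\cal B}$ of $G_2$ for which M3 holds and $\{ S \cap G_1 \mid S \in {\cal B}\} = {\cal U}(G_1)$. For $S \in {\cal B}$ put $p(S) = (S + G_1)/G_1$, and take ${\cal B}_3 := \{ p(S) \mid S \in {\cal B}\}$ as the candidate basis of $G_3$.

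The core of the construction is to equip the subquotients with LCA structures. For $S_1 \subset S_2$ in ${\cal B}$ the Dedekind modular law yields a natural isomorphism $p(S_2)/p(S_1) \cong (S_2/S_1)/A$, where $A := (S_1 + (S_2 \cap G_1))/S_1$ is the image of $S_2 \cap G_1$ under $S_2 \to S_2/S_1$; by M3 this $A$ is a \emph{closed} subgroup of the LCA group $S_2/S_1$, so $p(S_2)/p(S_1)$ inherits the quotient LCA structure. I would then verify LCA(2)-1 and LCA(2)-2 for ${\cal B}_3$ (which transport directly from the corresponding axioms for ${\cal B}$) and, most importantly, LCA(2)-3: for $S_1 \subset S_2 \subset S_3$ in ${\cal B}$ one must show that $p(S_2)/p(S_1) \to p(S_3)/p(S_1) \to p(S_3)/p(S_2)$ is an exact triple in LCA. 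This is deduced from the exact triple $S_2/S_1 \to S_3/S_1 \to S_3/S_2$ (LCA(2)-3 for ${\cal B}$) by quotienting out the compatible nested closed subgroups arising from the images of $S_i \cap G_1$, i.e.\ a nine-lemma computation inside the exact category LCA. Lemma \ref{lemma3.1} then promotes ${\cal B}_3$ to a genuine LCA(2) structure ${\cal U}(G_3)$ on $G_3$.

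Next I would check that $p$ is an admissible epic. That $p$ is a morphism follows from Lemma \ref{lemma3.4} applied to the bases ${\cal B}$ and ${\cal B}_3$: conditions (1) and (2) hold because $p(S) \in {\cal B}_3$ for every $S \in {\cal B}$, and condition (3) is immediate since the induced map $S_2/S_1 \to p(S_2)/p(S_1)$ is precisely the quotient by $A$, hence continuous. Surjectivity of $p$ gives E1, and E2 holds verbatim for ${\cal B}$ because $\{p(S) \mid S \in {\cal B}\}$ is a basis of $G_3$ and each $S_2/S_1 \to p(S_2)/p(S_1)$ is an admissible epic in LCA by construction.

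Finally, for the universal property, let $g : G_2 \to H$ be any morphism in LCA(2) with $g \circ f = 0$, i.e.\ $g(G_1) = 0$. Then $g$ factors uniquely as $g = \bar g \circ p$ with $\bar g : G_3 \to H$ a group homomorphism, and it remains to see that $\bar g$ lies in LCA(2). I would verify this through Lemma \ref{lemma3.3}, using the set ${\cal P} = \{ (p(S_1), p(S_2)) \mid S_1 \subset S_2 \text{ in } {\cal B}\}$ so that condition (3) need only be tested on pairs coming from nested $S_1 \subset S_2$ in ${\cal B}$: conditions (1) and (2) come from Mor2 and Mor3 for $g$ via $\bar g(p(S)) = g(S)$, while for (3) the induced map $p(S_2)/p(S_1) \to S_2'/S_1'$ is gotten from the continuous map $S_2/S_1 \to S_2'/S_1'$ (Mor4 for $g$), which kills $A$ because $g(S_2 \cap G_1) \subset g(G_1) = 0$ and hence descends continuously to the quotient $p(S_2)/p(S_1)$. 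I expect the main obstacle to be the verification of LCA(2)-3 for ${\cal B}_3$ — establishing exactness of the induced triple of subquotients after modding out the nested closed subgroups — since this is exactly the place where the LCA-level exact structure (closedness of images, exactness of quotients, the nine lemma) is genuinely needed, whereas the morphism and universal-property checks reduce to routine applications of Lemmas \ref{lemma3.4} and \ref{lemma3.3}.
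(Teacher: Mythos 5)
Your overall route is exactly the paper's: realize the cokernel as $G_2/G_1$, use a basis ${\cal B}$ of $G_2$ satisfying M3 to build the candidate basis $\{p(S)\mid S\in{\cal B}\}$, invoke Lemma \ref{lemma3.1} to get the LCA(2) structure, check that $p$ is an admissible epic, and verify the universal property (where your Lemma \ref{lemma3.3} argument is in fact more detailed than the paper's ``it is straightforward to prove that $\pi$ is the cokernel''). However, there is a concrete gap in your application of Lemma \ref{lemma3.1}. That lemma requires an LCA structure on $T_2/T_1$ for \emph{every} nested pair $T_1\subset T_2$ in the class ${\cal B}_3$, and LCA(2)-3 for every nested triple in ${\cal B}_3$ — whereas you only construct the structure $(S_2/S_1)/A$ and verify LCA(2)-3 for pairs and triples \emph{lifted from nested chains in} ${\cal B}$. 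The containment $p(S_1)\subset p(S_2)$ only means $S_1\subset S_2+G_1$ and need not lift to $S_1\subset S_2$; moreover the same image subgroup has many preimages in ${\cal B}$. (Already in the LCA analogue: in $G_2={\Bbb R}^2$ with $G_1={\Bbb R}\times 0$, the diagonal and $0\times{\Bbb R}$ have equal images but are incomparable.) So as written, your data do not satisfy the hypotheses of Lemma \ref{lemma3.1}, and nothing in your plan addresses well-definedness of the topology when $p(S_1)=p(\tilde S_1)$ for distinct $S_1,\tilde S_1$.

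The paper closes exactly this hole with an extra step you should add. For a general pair $\pi(S_1)\subset\pi(S_2)$ with $S_1,S_2\in{\cal B}$, choose $S,S'\in{\cal B}$ with $S\subset S_i\subset S'$, $i=1,2$ (possible since ${\cal B}$ is a basis and LCA(2)-2 holds); the pairs $(S,S_i)$ and $(S,S')$ are lifted, so your construction makes $\pi(S_1)/\pi(S)$ and $\pi(S_2)/\pi(S)$ closed subgroups of $\pi(S')/\pi(S)$, the hypothesis $\pi(S_1)\subset\pi(S_2)$ gives a closed embedding $\pi(S_1)/\pi(S)\to\pi(S_2)/\pi(S)$, and one defines $\pi(S_2)/\pi(S_1)=\bigl(\pi(S_2)/\pi(S)\bigr)/\bigl(\pi(S_1)/\pi(S)\bigr)$ with the quotient topology, then checks independence of the choices of $S$ and $S'$ (which simultaneously settles the well-definedness issue above, and extends your LCA(2)-3 verification to arbitrary triples in ${\cal B}_3$). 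With this paragraph inserted, the remainder of your proposal — closedness of $A$ from M3, the morphism check via Lemma \ref{lemma3.4}, E1/E2 for $p$, and the factorization argument for the universal property — is correct and matches the paper's proof.
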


\begin{proof} Let ${\cal B}$ be a basis of $G_2$ satisfying M3, and let $\pi: G_2\to G_2/G_1$ be the natural projection. We will use Lemma \ref{lemma3.1} to give a unique LCA(2) structure on $G_2/G_1$ such that ${\cal B}':=\{\pi(S) \; | \; S\in {\cal B}\}$ is a basis. Assume that $\pi(S_1)\subset\pi(S_2)$ where $S_1, S_2\in{\cal B}$. Let us give an LCA structure on $\pi(S_2)/\pi(S_1)$.    If $ S_1 \subset S_2$, then  $ (S_2 \cap G_1) / (S_1 \cap G_1) $ is a closed subgroup of $S_2/S_1$,  and the abelian group
$\pi(S_2)/\pi(S_1)$ is isomorphic to the quotient group  $ \left( S_2/S_1 \right) /  \left( (S_2 \cap G_1) / (S_1 \cap G_1)\right)$, the latter is an LCA group,  it gives an LCA group
 structure on $\pi(S_2)/\pi(S_1)$ vis isomorphism.
  It is clear that if $S_1 \subset S_2 \subset S_3$ with $S_i\in {\cal B}$, then $ \pi (S_2) / \pi (S_1 ) \to \pi (S_3) / \pi ( S_1 )$ is an monic in LCA.
    In general case,   we choose $S, S'\subset {\cal B}$ such that $S\subset S_i\subset S'$, $i=1, 2$.
  Then $  \pi ( S_1 ) / \pi ( S ) $ and $ \pi (S_2 ) / \pi ( S ) $ are both closed subgroups of $ \pi ( S' ) / \pi ( S )$, the condition that $ \pi ( S_1 ) \subset \pi ( S_2)$
    implies there is a closed embedding $  \pi ( S_2 ) / \pi ( S )  \to  \pi ( S_1 ) / \pi ( S )$.
   We give $ \pi ( S_2 ) / \pi (S_1 ) =   \left( \pi ( S_2 ) / \pi ( S ) \right)  /  \left(  \pi ( S_1 ) / \pi ( S )\right) $ the quotient topology. This topology is independent of the choices of $S$ and $S'$. It is easy to check that the conditions
     of Lemma \ref{lemma3.1} hold for $G_2/ G_1$ and ${\cal B}'$,  so we have an LCA(2)
     structure on $ G_2 / G_1$. By the construction,  the map $\pi : G_2 \to G_2 / G_1$
      is an admissible epic in LCA(2).     It is straightforward to prove that $\pi$ is the cokernel of $f$.
\end{proof}

\begin{lemma}\label{lemma3.15}
If $f: G_1\to G_2$ is an admissible epic in LCA(2), then the kernel of $f$ exists and is an admissible monic.
\end{lemma}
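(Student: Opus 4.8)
The plan is to construct the kernel explicitly and dualize the argument of Lemma \ref{lemma3.14}. Set $K=\ker(f)=\{a\in G_1 \mid f(a)=0\}$ with the inclusion $i:K\to G_1$. By Lemma \ref{lemma3.10} I may fix a basis $\mathcal B$ of $G_1$ satisfying E2 and with $\{f(S)\mid S\in\mathcal B\}=\mathcal U(G_2)$, and I would propose $\mathcal B_K=\{S\cap K\mid S\in\mathcal B\}$ as a basis for $K$. The key observation is that for $S_1\subset S_2$ in $\mathcal B$ the group $(S_2\cap K)/(S_1\cap K)$ is naturally identified, via the second isomorphism theorem, with $(S_1+(S_2\cap K))/S_1$, which is exactly the kernel of the admissible epic $S_2/S_1\to f(S_2)/f(S_1)$ supplied by E2; hence it is a closed subgroup of $S_2/S_1$ and inherits an LCA structure.

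With these LCA structures in place, I would verify that $\mathcal B_K$ satisfies the hypotheses LCA(2)-1, LCA(2)-2, LCA(2)-3 of Lemma \ref{lemma3.1}, so that $K$ acquires a canonical LCA(2) structure with $\mathcal B_K$ as a basis. Conditions LCA(2)-1 and LCA(2)-2 are immediate from the corresponding properties of $\mathcal B$ on $G_1$. The substance is LCA(2)-3: for $S_1\subset S_2\subset S_3$ in $\mathcal B$ I must show that $(S_2\cap K)/(S_1\cap K)\to(S_3\cap K)/(S_1\cap K)\to(S_3\cap K)/(S_2\cap K)$ is an exact triple in LCA. Here I would assemble the commutative $3\times 3$ diagram whose three rows are the $K$-sequence, the $G_1$-sequence $S_2/S_1\to S_3/S_1\to S_3/S_2$, and the $G_2$-sequence $f(S_2)/f(S_1)\to f(S_3)/f(S_1)\to f(S_3)/f(S_2)$, and whose columns are the kernel--cokernel pairs attached to the admissible epics of E2. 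The middle and bottom rows are exact triples by LCA(2)-3 for $G_1$ and $G_2$, and all three columns are exact triples in LCA; the nine lemma in the exact category LCA then forces the top row to be an exact triple as well. This topological exactness is the main obstacle, and it is precisely the point where the exact-category structure of LCA, rather than mere algebra, is needed.

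Next I would check that $i:K\to G_1$ is an admissible monic. M1 is trivial. For M2, take $x\in\bigcap_{S\in\mathcal U(G_1)}(K+S)$; then for each $S\in\mathcal B$ we have $x\in K+S$, so $f(x)\in f(S)$, and since $\{f(S)\mid S\in\mathcal B\}=\mathcal U(G_2)$ has trivial intersection by LCA(2)-1 for $G_2$, we get $f(x)=0$, i.e. $x\in K$. For M3 I would use the very basis $\mathcal B$: by construction $\{S\cap K\mid S\in\mathcal B\}=\mathcal B_K$ is a basis of $K$, and the map $(S_2\cap K)/(S_1\cap K)\to S_2/S_1$ is the closed embedding onto the kernel identified above.

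Finally I would verify that $i$ is the categorical kernel of $f$. Since $i$ is a monic with $f\circ i=0$, given any morphism $g:H\to G_1$ in LCA(2) with $f\circ g=0$ there is a unique group homomorphism $\tilde g:H\to K$ with $i\tilde g=g$, and it remains to see that $\tilde g$ is a morphism in LCA(2). For this I would invoke Lemma \ref{lemma3.4}, testing on a basis $\mathcal B_H$ of $H$ and the basis $\mathcal B_K$ of $K$: conditions (1) and (2) follow from those for $g$ together with $g(H)\subset K$, while condition (3) holds because the induced map $T_2/T_1\to(S_2\cap K)/(S_1\cap K)$ is the corestriction of the continuous map $T_2/T_1\to S_2/S_1$ to the closed subgroup $(S_2\cap K)/(S_1\cap K)$, hence continuous. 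This establishes the universal property and completes the proof.
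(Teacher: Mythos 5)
Your construction follows, in outline, the paper's own proof: take the set-theoretic kernel $K$ with inclusion $i$, propose $\mathcal{B}_K=\{S\cap K \mid S\in\mathcal{B}\}$ as a basis, identify $(S_2\cap K)/(S_1\cap K)$ for $S_1\subset S_2$ in $\mathcal{B}$ with the kernel of the admissible epic $S_2/S_1\to f(S_2)/f(S_1)$ furnished by E2, invoke Lemma \ref{lemma3.1} for the LCA(2) structure, check M1--M3, and verify the universal property via Lemma \ref{lemma3.4}. Your second-isomorphism identification of the kernel is correct, your M2 argument agrees with the paper's remark that M2 follows from $\bigcap_{S\in\mathcal{B}}f(S)=\{0\}$ (your detour through Lemma \ref{lemma3.10} is harmless but not needed for this), and your $3\times 3$ diagram with exact columns coming from the E2 kernel--cokernel pairs, together with the nine lemma in the exact category LCA, is a legitimate way to discharge the verification of LCA(2)-3 that the paper waves off as ``easy to check.''

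There is, however, one concrete gap. You define the LCA structure on $(S_2\cap K)/(S_1\cap K)$ only when $S_1\subset S_2$ in $\mathcal{B}$, but an inclusion $S_1\cap K\subset S_2\cap K$ in $\mathcal{B}_K$ does not force $S_1\subset S_2$. Lemma \ref{lemma3.1} requires an LCA structure on $T_2/T_1$ for \emph{every} nested pair in the proposed basis (and LCA(2)-3 for every nested triple), and condition (3) of Lemma \ref{lemma3.4} in your universal-property step likewise quantifies over nested pairs in $\mathcal{B}_K$; in both places your argument breaks down for incomparable $S_1,S_2$, since your corestriction presupposes a map $T_2/T_1\to S_2/S_1$ that does not exist when $S_1\not\subset S_2$. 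This is exactly the point where the paper's proof spends its effort: given $S_1\cap G_0\subset S_2\cap G_0$, it chooses $S,S'\in\mathcal{B}$ with $S\subset S_i\subset S'$, realizes both $(S_i\cap G_0)/(S\cap G_0)$ as closed subgroups of $S'/S$, gives $(S_2\cap G_0)/(S_1\cap G_0)$ the resulting quotient topology, and checks independence of the choice of $S,S'$; in the universal property it factors $T_2/T_1\to (S'\cap G_0)/(S_1\cap G_0)$ through the kernel of the exact triple $(S_2\cap G_0)/(S_1\cap G_0)\to (S'\cap G_0)/(S_1\cap G_0)\to (S'\cap G_0)/(S_2\cap G_0)$, using that the composite into the third term vanishes. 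Once you insert this auxiliary-pair step, your proof is complete; everything else is sound and matches the paper's route.
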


\begin{proof} Let ${\cal B}$ be a basis of $G_1$ satisfying E2. Let $G_0$ be the set-theoretical kernel of $f$, and let $i: G_0\to G_1$ be the inclusion. We will use Lemma \ref{lemma3.1} to give a unique LCA(2) structure on $G_0$ such that ${\cal B}':=\{S\cap G_0\; |\; S\in{\cal B}\}$ is a basis. Assume that $S_1\cap G_0\subset S_2\cap G_0$ where $S_1, S_2\in{\cal B}$. Choose $S, S'\in{\cal B}$ such that $S\subset S_i\subset S'$, $i=1, 2$. $S_i\cap G_0/S\cap G_0$ are closed subgroups of $S'/S$, $i=1, 2$, hence $S_1\cap G_0/S\cap G_0$ is a closed subgroup of $S_2\cap G_0/S\cap G_0$ and we give the quotient LCA structure on $S_2\cap G_0/S_1\cap G_0$, which is independent of the choice of $S$ and $S'$. It is easy to check that LCA(2)-1,  LCA(2)-2 and LCA(2)-3 hold for ${\cal B}'$, hence Lemma \ref{lemma3.1} applies. We have a unique LCA(2) structure on $G_0$ with ${\cal B}'$
 as a basis. And
$i$ is clearly an admissible monic. In fact M2 follows from the fact $\cap_{S\in{\cal B}}f(S)=\{0\}$, and M3 follows from E2.

To show that $i$ is the kernel of $f$, consider a morphism $g: G\to G_1$ in LCA(2) such that $f\circ g=0$. Let $h: G\to G_0$ be the unique map such that $g= i\circ h$. To show $h$ is a morphism we may apply Lemma \ref{lemma3.4} to bases ${\cal U}(G)$ and ${\cal B}'$. The conditions (1) and (2) in Lemma \ref{lemma3.4}
 are easy to check. Assume $T_1, T_2\in {\cal U}(G)$ with $T_1\subset T_2$, $S_1, S_2\in{\cal B}$ with $S_1\cap G_0\subset S_2\cap G_0$ such that $h(T_i)\subset S_i\cap G_0$, $i=1, 2$. Let $S'\in{\cal B}$ be as above. From the following commutative diagram
\begin{displaymath}
\xymatrix{
 S_2\cap G_0/S_1\cap G_0 \ar[r] & S'\cap G_0/ S_1\cap G_0 \ar[r] & S'\cap G_0/ S_2\cap G_0\\
 T_2/T_1 \ar@{.>}[u]^{\exists |} \ar[ru] \ar@/_/[rru]_0 &
}
\end{displaymath}
we see that $T_2/T_1\to S_2\cap G_0/S_1\cap G_0$ is a morphism in LCA. Note that the first row is an exact triple in LCA. This verifies (3) of Lemma \ref{lemma3.4}.
\end{proof}

\begin{lemma} \label{lemma3.16}
Let
\begin{displaymath}
\xymatrix{
 G_1\ar[r]^i & G_2\ar[r]^\pi & G_3
}
\end{displaymath}
be an admissible triple in LCA(2).
\newline (1) For any morphism $ f : B \to G_3$,  there exists  a diagram
\begin{displaymath}
\xymatrix{
  G_1  \ar[r]^{i'}  \ar[d]^=  &  G_2'  \ar[r]^{\pi'}  \ar[d]^{f'}
&  B  \ar[d]^f    \\
  G_1  \ar[r]^{i}  &  G_2  \ar[r]^{\pi}  &  G_3
}
\end{displaymath}
in LCA(2) such that the right square is a pullback and the first row is also an admissible triple.
\newline (2) For any morphism $ f : G_1 \to C$, there exists
 a diagram
\begin{displaymath}
\xymatrix{
  G_1  \ar[r]^{i}  \ar[d]^f  &  G_2  \ar[r]^{\pi}  \ar[d]^{f'}
&  G_3   \ar[d]^{=}    \\
  C  \ar[r]^{i'}  &  G_2'  \ar[r]^{\pi'}  &  G_3
}
\end{displaymath}
in LCA(2) such that the left square is a pushout and the second row is also an admissible triple.
\end{lemma}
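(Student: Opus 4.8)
The plan is to reduce both assertions to the concrete lemmas already established for admissible monics and epics, building the pullback and pushout by hand as in Lemma \ref{lemma2.2} and then verifying admissibility through a suitably adapted basis. Since LCA(2) is additive (Lemma \ref{lemma3.5}) and has kernels of admissible epics (Lemma \ref{lemma3.15}) and cokernels of admissible monics (Lemma \ref{lemma3.14}), the pullback square in (1) and the pushout square in (2) exist and are computed in the usual way; consequently the pullback/pushout property of the squares is automatic, and the whole content is that the newly produced horizontal maps are admissible and that the new rows are admissible triples. I will prove (1) in detail and obtain (2) by the dual construction.

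For (1), form the biproduct $G_2\times B$ and the morphism $\phi:=\pi\circ p_1-f\circ p_2:G_2\times B\to G_3$, $(a,b)\mapsto\pi(a)-f(b)$. The first key step is that $\phi$ is an admissible epic. To see this, factor $\phi=\mu\circ(\pi\times\mathrm{id}_B)$, where $\mu:G_3\times B\to G_3$ sends $(c,b)\mapsto c-f(b)$. The shear $\Psi:G_3\times B\to G_3\times B$, $(c,b)\mapsto(c-f(b),b)$, is an isomorphism in LCA(2) (its inverse is $(c,b)\mapsto(c+f(b),b)$, both being morphisms by Lemma \ref{lemma3.4}), and $\mu=p_1\circ\Psi$ is therefore an admissible epic; likewise $\pi\times\mathrm{id}_B$ is an admissible epic, being a product of admissible epics (one checks this by taking product bases). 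Hence $\phi$ is an admissible epic by Lemma \ref{lemma3.13}. Its kernel is exactly the fibre product $G_2'=\{(a,b)\in G_2\times B:\pi(a)=f(b)\}$, so by Lemma \ref{lemma3.15} $G_2'$ carries a canonical LCA(2) structure and the inclusion $h:G_2'\to G_2\times B$ is an admissible monic. I then set $f'=p_1\circ h$, $\pi'=p_2\circ h$, and $i':G_1\to G_2'$, $a_1\mapsto(i(a_1),0)$.

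The second and harder step is to show that $G_1\xrightarrow{i'}G_2'\xrightarrow{\pi'}B$ is an admissible triple. One checks at once that $\pi'$ is surjective (using surjectivity of $\pi$) and that, set-theoretically, $\ker\pi'=\{(a,0):a\in\ker\pi\}=i'(G_1)$, so it suffices to prove that $\pi'$ is an admissible epic; then $i'$ is its kernel, hence an admissible monic by Lemma \ref{lemma3.15}, and the row is admissible. To verify axiom E2 for $\pi'$ I would choose a basis of $G_2$ simultaneously adapted to the admissible monic $i$ and to the admissible epic $\pi$ of the original triple, refining the bases supplied by Lemma \ref{lemma3.6} and Lemma \ref{lemma3.10} by repeatedly inserting elements via Lemma \ref{lemma3.7} and Lemma \ref{lemma3.11}; pairing its members with a basis of $B$ and intersecting with $G_2'$ then produces a basis $\mathcal B'$ of $G_2'$ whose image under $\pi'$ is a basis of $B$. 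For $S_1\subset S_2$ in $\mathcal B'$ the induced map $S_2/S_1\to\pi'(S_2)/\pi'(S_1)$ lives between LCA subquotients, and I would read off that it is an admissible epic in LCA from the LCA exact structure (Lemma \ref{lemma2.2}) together with axioms LCA(2)-3 and LCA(2)-4.

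The main obstacle is precisely this last verification: producing a single basis compatible with \emph{both} maps of the original admissible triple, transporting it through the fibre product, and confirming that the resulting subquotient maps are admissible epics in LCA. Everything else — existence of the square, the pullback property, and the reduction $\ker\pi'=i'(G_1)$ — is formal. Part (2) is handled dually: realize the pushout as the cokernel of the anti-diagonal $j:G_1\to C\times G_2$, $a_1\mapsto(f(a_1),-i(a_1))$, show $j$ is an admissible monic by factoring it as $(f,\mathrm{id}):G_1\to C\times G_1$ followed by $\mathrm{id}_C\times(-i):C\times G_1\to C\times G_2$ and applying the dual shear together with Lemma \ref{lemma3.9}, and then invoke Lemma \ref{lemma3.14} to obtain $G_2'=\operatorname{coker}(j)$ with $i'=q\circ i_1$, $f'=q\circ i_2$ and the induced $\pi'$ (well defined since $\pi\circ p_2$ annihilates $j(G_1)$); the admissibility of the bottom row is verified by the basis argument dual to the one above.
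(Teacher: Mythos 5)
Your overall architecture coincides with the paper's: realize the pullback as the kernel of $\pi_f\colon G_2\times B\to G_3$, $(a,b)\mapsto \pi(a)-f(b)$, and the pushout as the cokernel of the anti-diagonal monic; invoke Lemma \ref{lemma3.15} (resp.\ Lemma \ref{lemma3.14}) for the LCA(2) structure and the universal property, which is then formal; and reduce admissibility of the new row to showing that $\pi'$ is an admissible epic (resp.\ that $i'$ is an admissible monic). Your shear-automorphism factorization of $\pi_f$ through $\pi\times\mathrm{id}_B$ and $(c,b)\mapsto(c-f(b),b)$ is a genuinely clean alternative to the paper's treatment, which instead exhibits the explicit basis ${\cal B}'=\{\,S\times T \mid S\in{\cal B},\ T\in{\cal U}(B),\ f(T)\subset\pi(S)\,\}$ of $G_2\times B$ and checks E2 for $\pi_f$ directly; the same goes for your factorization of $i_f$ in part (2). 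Those steps are sound (modulo the routine check that products of admissible epics/monics are admissible, which product bases do give).

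However, the decisive step --- E2 for $\pi'\colon G_2'\to B$ --- is exactly where your proposal stops: you yourself flag it as ``the main obstacle'' and offer only a plan, and the plan as written has two concrete defects. First, you propose a basis of $G_2$ ``simultaneously adapted to $i$ and $\pi$''; no lemma in the paper produces such a basis (Lemma \ref{lemma3.7} inserts elements preserving M3 for a monic, Lemma \ref{lemma3.11} preserves E2 for an epic, and it is nowhere shown that the two insertion procedures can be interleaved without destroying each other's property), and in fact no such basis is needed: the paper uses only a basis adapted to $\pi$, recovering $i'$ afterwards as the kernel of $\pi'$ via Lemma \ref{lemma3.15}. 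Second, in ``pairing its members with a basis of $B$'' you drop the compatibility condition $f(T)\subset\pi(S)$; without it one only gets $\pi'(S\times_{G_3}T)=T\cap f^{-1}(\pi(S))$, which need not equal $T$ nor visibly lie in ${\cal U}(B)$, so the images under $\pi'$ need not form a basis of $B$. The paper's actual resolution is what is missing from your argument: with ${\cal B}'$ as above, the kernel construction of Lemma \ref{lemma3.15} yields the basis ${\cal B}_2'=\{S\times_{G_3}T\}$ of $G_2'$ with $\pi'(S\times_{G_3}T)=T$; given $S_1\times_{G_3}T_1\subset S_2\times_{G_3}T_2$ in ${\cal B}_2'$, one chooses $S'\in{\cal B}$ containing $S_1$ and $S_2$, observes that $S'\times_{G_3}T_2/S_1\times_{G_3}T_1\to T_2/T_1$ sits in a pullback square in LCA over the admissible epic $S'/S_1\to\pi(S')/\pi(S_1)$, hence is an admissible epic by the LCA exact structure (Lemma \ref{lemma2.2}), and finally restricts to the closed subgroup $S_2\times_{G_3}T_2/S_1\times_{G_3}T_1$, which still maps onto $T_2/T_1$. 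Until this (or an equivalent) computation, together with its dual for $i'$ in part (2), is actually carried out, your proof of both parts is incomplete.
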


\begin{proof} (1) Consider the morphism
 \[
 \pi_f: G_2\times B\to G_3,\quad (a,b)\mapsto \pi(a)-f(b).
  \]
  Then $\pi_f$ is onto. Let ${\cal B}$ be a basis of $G_2$ such that E2 holds for $\pi.$ Let \[{\cal B}'=\{S\times T\; |\; S\in {\cal B}, T\in {\cal U}(B), f(T)\subset \pi (S)\}.\]
  It is easy to see that ${\cal B}'$ is a basis of $G_2\times B$, and
  \[
  \{ \pi_f(S\times T)\; |\; S\times T\in{\cal B}'\}=\{\pi(S)\;|\; S\in{\cal B}\}
  \]
  is a basis of $G_3$. For $S_1\times T_1\subset S_2\times T_2$, which are both in ${\cal B}'$, the induced map
  \[
  \bar{\pi}_f: S_2\times S_1/ T_2\times T_1\to \pi(S_2)/\pi(S_1)
  \]
  is an admissible epic in LCA. Hence $\pi_f$ is an admissible epic in LCA(2) and we denote
  $Ker(\pi_f)$ by $i_f: G_2' \to G_2\times B$. Let $f' : G_2' \to G_2$ and
 $\pi' : G_2' \to B$ be the obvious projections.  $i' : G_1 \to G_2 ' $ is given by
$a_1 \mapsto (a_1 , 0 )$. The diagram in (1) is commutative.  Let us prove that the right square is a pullback. Assume we have the following commutative diagram in LCA(2)
\begin{displaymath}
\xymatrix{
G \ar@{.>}[rd]^h \ar@/^/[rrd]^{g_1} \ar@/_/[ddr]^{g_2} &  & \\
& G_2'  \ar[r]^{\pi'}  \ar[d]^{f'}
&  B  \ar[d]^f \\
& G_2  \ar[r]^{\pi}  &  G_3
}
\end{displaymath}
Then the unique map $h: G \to G_2'$ making the whole diagram commutative must be a morphism. This is because $G_2'=Ker(\pi_f)$ and
the composition
\begin{displaymath}
\xymatrix{
G \ar[r]^{g_2\times g_1} & G_2\times B \ar[r]^{\pi_f} & G_3
}
\end{displaymath}
is zero. It remains to show the first row in (1) is an admissible triple. From the proof of Lemma \ref{lemma3.15} we know that ${\cal B}'_2:=\{S\times_{G_3}T \; |\; S\times T\in{\cal B}'\}$
is a basis of $G_2'.$ It is easy to check that
$\{\pi'(S\times_{G_3}T)\;|\; S\times T\in{\cal B}'\}={\cal U}(B)$.
Assume $S_1\times_{G_3}T_1\subset S_2\times_{G_3}T_2$ which are both
in ${\cal B}_2'$. Then $T_1\subset T_2.$ Choose $S'\in{\cal B}$ containing both $S_1$ and $S_2.$ Then
$S'\times T_2\in {\cal B}'$, and we have the following square of pullback in LCA
\begin{displaymath}
\xymatrix{
S'\times_{G_3}T_2/ S_1\times_{G_3}T_1 \ar[d]^{\bar{f}'} \ar[r]^{\qquad\quad\bar{\pi}'} & T_2/T_1\ar[d]^{\bar{f}}\\
S'/S_1\ar[r]^{\bar{\pi}} & \pi(S')/\pi(S_1)
}
\end{displaymath}
The bottom arrow $\bar{\pi}$ is an admissible epic, hence so is the top arrow $\bar{\pi}'$.
Since $S_2\times_{G_3}T_2/ S_1\times_{G_3}T_1$ is a closed subgroup of $S'\times_{G_3}T_2/ S_1\times_{G_3}T_1$, the induced map
\[
S_2\times_{G_3}T_2/S_1\times_{G_3}T_1\to T_2/T_1,
\]
being onto, must be also an admissible epic. This proves that $\pi'$ is an admissible epic in LCA(2).
It is not hard to prove that $i'$ is the kernel of $\pi'$.
\newline (2) Consider the morphism
\[
i_f: G_1\to G_2\times C, \quad a\mapsto (i(a), - f(a)).
\]
Then $i_f$ satisfies M1 and M2. Let ${\cal B}$ be a basis of $G_2$ such that M3 holds for $i$. Let
\[
{\cal B}'=\{S\times T\; |\; S\in{\cal B}, T\in{\cal U}(C), f(S\cap G_1)\subset T\}.
\]
Then ${\cal B}'$ is a basis of $G_2\times C$ and
\[
\{i_f^{-1}(S\times T)\; |\; S\times T\in{\cal B}'\}=\{S\cap G_1\; |\; S\in{\cal B}\}
\]
is a basis of $G_1$. For $S_1\times T_1\subset S_2\times T_2$ which are both in ${\cal B}'$, the induced map
\[
\bar{i}_f: S_2\cap G_1/ S_1\cap G_1\to S_2/S_1\times T_2/T_1
\]
is a closed embedding. Hence $i_f$ is an admissible monic in LCA(2), and we denote $Coker(i_f)$  by
$\pi_f: G_2\times C\to G_2'$. Let $f': G_2\to G_2'$, $i: C\to G_2'$ be the natural maps. $\pi'$ is given by
$\pi_f(a, c)\mapsto \pi(a)$, where $(a,c)\in G_2\times C$. The left square in the diagram of (2) is a pushout. Indeed assume that we have the commutative diagram in LCA(2)
\begin{displaymath}
\xymatrix{
G_1 \ar[r]^i \ar[d]^f & G_2 \ar[d]^{f'} \ar@/^/[rdd]^{g_2} & \\
C \ar[r]^{i'}  \ar@/_/[rrd]^{g_1}  &  G_2' \ar@{.>}[rd]^h & \\
& & G
}
\end{displaymath}
The unique map $h$ making the whole diagram commutative is a morphism, since
$G_2'=Coker(i_f)$ and the composition
\begin{displaymath}
\xymatrix{
G_1 \ar[r]^{i_f} & G_2\times C \ar[r]^{g_2\oplus g_1} & G
}
\end{displaymath}
is zero. Let us prove that the second row in (2) is an admissible triple. From the proof of Lemma
\ref{lemma3.14} we have ${\cal B}_2':=\{\pi_f(S\times T)\; |\; S\times T\in {\cal B}'\}$ is a basis
of $G_2'$. We have $\bigcap_{U\in {\cal B}_2'}(i'(C)+U)=i'(C)$, which can be checked by using {M2} for $i$, i.e., using $\bigcap_{S\in {\cal B}}(G_1+S)=G_1$. This proves M2 for $i'$.
It is easy to see $\{i'^{-1}\pi_f(S\times T)\; |\; S\times T\in{\cal B}'\}={\cal U}(C)$.
Assume $\pi_f(S_1\times T_1)\subset \pi_f(S_2\times T_2)$ which are both in ${\cal B}_2'$. Then $T_1\subset T_2$. Choose $S\in{\cal B}$ contained in both $S_1$ and $S_2$. Then $S\times T_1\in{\cal B}'$, and we have the following square of pushout in LCA
\begin{displaymath}
\xymatrix{
S_2\cap G_1/S\cap G_1 \ar[r]^{\bar{i}} \ar[d]^{\bar{f}} & S_2/S \ar[d]^{\bar{f}'}\\
T_2/T_1 \ar[r]^{\bar{i}'\quad\qquad} & \pi_f(S_2\times T_2)/\pi_f(S\times T_1)
}
\end{displaymath}
The top arrow $\bar{i}$ is a closed embedding, hence so is the bottom arrow $\bar{i}'$. Since
$\pi_f(S_2\times T_2)/\pi_f(S_1\times T_1)$ is a quotient of $\pi_f(S_2\times T_2)/\pi_f(S\times T_1)$,
the induced map
\[
T_2/T_1\to \pi_f(S_2\times T_2)/\pi_f(S_1\times T_1),
\]
being injective, must be also a closed embedding. This proves that $i'$ is an admissible monic in LCA(2). One can verify easily that $\pi'$ is the cokernel of $i'$.
\end{proof}

In summary we have proved the following result.

\begin{theorem}\label{exact}
  Let $\Sigma $ be the class of kernel-cokernel pairs $(i , p )$ in LCA(2)
       \begin{displaymath}
\xymatrix{
  G_1  \ar[r]^{i}    &  G_2  \ar[r]^{p}
&  G_3  }
\end{displaymath}
 such that $i$ is an admissible monic and $p$ is an admissible epic. Then
 $\Sigma $ is an exact category structure on LCA(2).
 \end{theorem}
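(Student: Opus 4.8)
The plan is to establish the theorem by verifying the four axioms E1--E4 in the definition of an exact structure, together with closure of $\Sigma$ under isomorphism, assembling the lemmas proved earlier in this subsection. Since nearly all of the substantive work has already been done, the remaining task is largely bookkeeping: one must reconcile the intrinsic notions of admissible monic (M1--M3) and admissible epic in LCA(2) with the notions of admissible monic and admissible epic attached to the class $\Sigma$, namely a morphism that occurs as the first, respectively second, arrow of a kernel--cokernel pair in $\Sigma$.

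First I would dispose of the easy items. Closure of $\Sigma$ under isomorphism is immediate, since the defining conditions for admissible monics and epics are stated in terms of existence of suitable bases and are manifestly isomorphism-invariant. For the identity axiom, one checks directly that for any object $A$ the identity $1_A$ satisfies M1--M3 (taking $G_1=G_2=A$ and any basis) and likewise satisfies the defining conditions of an admissible epic, so it is simultaneously an admissible monic and an admissible epic. Next I would record the crucial compatibility between the two languages: by Lemma \ref{lemma3.14} an admissible monic $i:G_1\to G_2$ has a cokernel $p$ that is an admissible epic, with $i=Ker(p)$, so that $(i,p)\in\Sigma$; dually, by Lemma \ref{lemma3.15} an admissible epic $p$ has a kernel $i$ that is an admissible monic with $p=Coker(i)$, so that $(i,p)\in\Sigma$. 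This shows that the admissible monics and epics of the class $\Sigma$ coincide exactly with those defined by M1--M3 and by the defining conditions of an admissible epic.

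With this in hand, the composition axiom is precisely Lemma \ref{lemma3.9} (admissible monics are closed under composition) together with Lemma \ref{lemma3.13} (admissible epics are closed under composition). For the pushout axiom, given an admissible monic $i:G_1\to G_2$ and any morphism $f:G_1\to C$, I would first complete $i$ to an admissible triple $G_1\to G_2\to G_3$ using its cokernel (Lemma \ref{lemma3.14}), and then invoke Lemma \ref{lemma3.16}(2) to produce the pushout of $i$ along $f$ as the left-hand arrow $i'$ of a new admissible triple $C\to G_2'\to G_3$; in particular the pushout exists and $i'$ is again an admissible monic. Dually, for the pullback axiom, given an admissible epic $p:G_2\to G_3$ and any morphism $f:B\to G_3$, I would complete $p$ to an admissible triple using its kernel (Lemma \ref{lemma3.15}) and apply Lemma \ref{lemma3.16}(1), which realizes the pullback of $p$ along $f$ as the right-hand arrow $\pi'$ of an admissible triple $G_1\to G_2'\to B$, so that the pullback exists and $\pi'$ is again an admissible epic.

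The step I expect to be the genuine obstacle is the pushout and pullback axioms --- the actual construction of the pushout and pullback objects and the proof that admissibility is inherited --- but this is exactly the content of Lemma \ref{lemma3.16}, whose proof rests on the fibre-product and quotient constructions of Lemmas \ref{lemma3.15} and \ref{lemma3.14} and on a careful selection of compatible bases adapted to the given triple. Granting those lemmas, the only point demanding care in the theorem itself is the definitional matching described above, which ensures that the abstract admissible monics and epics of the putative exact structure are the same as the ones we have been using throughout; once this is secured, axioms E1--E4 all hold and $\Sigma$ is an exact structure on LCA(2).
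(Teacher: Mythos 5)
Your proposal is correct and follows essentially the same route as the paper, which proves Theorem \ref{exact} precisely by summarizing the preceding chain of lemmas: E1 and isomorphism-closure are routine, E2 is Lemmas \ref{lemma3.9} and \ref{lemma3.13}, and E3--E4 are Lemma \ref{lemma3.16} built on the kernel/cokernel constructions of Lemmas \ref{lemma3.14} and \ref{lemma3.15}. Your explicit reconciliation of the intrinsic notions (M1--M3, E1--E2) with the $\Sigma$-relative notions of admissible monic and epic is left implicit in the paper, so your write-up is if anything slightly more careful on that bookkeeping point.
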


\subsection{\bf  Bruhat-Schwartz functions on LCA(2) }\label{ss4.3}

For an LCA group $A$, we denote by $ \mu ( A) $ the space of invariant tempered distributions on $A$. Each nonzero Haar measure on $A$ is a basis vector of $\mu ( A ) $.
 We write the pairing of $\mu \in \mu ( A )$ with $ f \in {\cal S} ( A)$   as
 \[
 (\mu, f):=\int_A f( x)  d\mu ( x ) .
 \]
   If $i:  A_1 \to A_2$ is a closed embedding in LCA,  then the restriction map $i^* : {\cal S} ( A_2 ) \to {\cal S} ( A_1 )$
    is a homomorphism of locally convex topological spaces.
  If $ p: A \to A'$ is an admissible epic in LCA,  we have a map
 $p_* :  {\cal S} ( A ) \otimes \mu ( Ker ( p ) ) \to  {\cal S} ( A' ) $ given by the integration on the fiber of $p$:
  for $a' \in A' $, let $a \in p^{-1} ( a' ) $ and define
      \begin{equation}\label{3.3.1}  p_{*} ( f \otimes \mu ) ( a' ) =    \int_{ Ker ( p ) }  f ( a  +  x  )   d\mu ( x )  . \end{equation}
 There is an isomorphism
\begin{equation}\label{3.3.2}    \mu ( Ker ( p )  ) \otimes \mu ( A'  ) \to \mu ( A )  \end{equation}
given by
  \[  (   \mu  \otimes \mu'  , f ) = \int_{A'}  p_* ( f \otimes \mu  ) ( a' ) d\mu' ( a' ) . \]

\begin{lemma}\label{lemma3.3.1}
   (1) Let $i_1 : A_1 \to A_2$ and $i_2 : A_2 \to A_3$ be the closed embeddings in LCA. Then
  $ (i_2 i_1)^* =  i_1^* i_2^*$ as maps  from ${\cal S} ( A_3 )$ to ${\cal S} ( A_1) $.
 \newline (2)  Let $ p: A \to A'$ and $p':  A' \to A''$  be admissible epics in LCA.
       Then the kernel of $ p' p$ is $ Ker ( p' p ) = p^{-1} ( Ker ( p' ) ) $, and we have
 an exact triple
             \begin{displaymath}
\xymatrix{
  Ker ( p )   \ar[r]    &  Ker ( p' p ) = p^{-1} ( Ker ( p' ) )   \ar[r]
&  Ker ( p' ).   }
\end{displaymath}
Ientifying $ \mu (  Ker ( p' p ) ) $ with $\mu ( Ker ( p ) ) \otimes \mu ( Ker ( p ' ) ) $ by  (\ref{3.3.2}), one has
 $ (p' p)_* =  p' _* p_* $  as maps from $ {\cal S} ( A ) \otimes \mu ( Ker ( p'p) )$ to  ${\cal S} ( A'' )$ .
\end{lemma}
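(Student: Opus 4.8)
The plan is to prove (1) by the transitivity of restriction, and (2) by unwinding the definitions of the three push-forward maps and recognizing that both sides reduce to one and the same iterated integral, the equality of the two orders of integration being exactly the content of the identification (\ref{3.3.2}).

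For part (1), I would first note that the composite $i_2 i_1 : A_1 \to A_3$ is again a closed embedding (closed embeddings are stable under composition in LCA), so $(i_2 i_1)^*$ is defined. For $f \in {\cal S}(A_3)$, the function $(i_2 i_1)^* f$ is the restriction of $f$ to the subgroup $i_2 i_1(A_1)$, whereas $i_1^* i_2^* f$ is obtained by first restricting $f$ to $i_2(A_2)$ and then restricting that to $i_1(A_1) \subset i_2(A_2)$. Since restriction of functions is transitive, these two functions agree; the only point to record is that restriction of a Bruhat--Schwartz function to a closed subgroup is again Bruhat--Schwartz, which is precisely what makes each $i^*$ well-defined.

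For part (2), I would first dispose of the purely group-theoretic identity: $a \in Ker(p'p)$ iff $p(a) \in Ker(p')$ iff $a \in p^{-1}(Ker(p'))$. Next I would establish the exact triple in LCA. Since $p$ is an admissible epic it is an open continuous surjection with closed kernel inducing $A/Ker(p) \cong A'$; the subgroup $p^{-1}(Ker(p'))$ is closed (continuity) and saturated (a union of $p$-fibres), so the restriction $p : p^{-1}(Ker(p')) \to Ker(p')$ is again an open continuous surjection with kernel $Ker(p)$. This yields the exact triple $Ker(p) \to Ker(p'p) \to Ker(p')$ and, crucially, legitimizes applying the isomorphism (\ref{3.3.2}) to it, identifying $\mu(Ker(p'p))$ with $\mu(Ker(p)) \otimes \mu(Ker(p'))$. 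The heart of the argument is then the identity $(p'p)_* = p'_* p_*$, which I would verify pointwise. Fixing $a'' \in A''$ and a lift $a \in A$ with $p'p(a) = a''$, I set $F = p_*(f \otimes \mu) \in {\cal S}(A')$ and compute $p'_*(F \otimes \mu')(a'')$ as an integral over $Ker(p')$; substituting the definition of $F$ and choosing, for each $\bar{x} \in Ker(p')$, a lift $z \in Ker(p'p)$ with $p(z) = \bar{x}$, the value becomes
\[
\int_{Ker(p')} \left( \int_{Ker(p)} f(a + z + y)\, d\mu(y) \right) d\mu'(\bar{x}).
\]
Expanding the left-hand side by the definition (\ref{3.3.1}) of $(p'p)_*$ and then the identification (\ref{3.3.2}) applied to the triple of kernels produces exactly the same iterated integral, so the two sides coincide.

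The routine but slightly delicate points, which I expect to be the main obstacle, are the bookkeeping: checking that the iterated integral is independent of the chosen lifts $a$ and $z$; that $p_*(f \otimes \mu)$ and its further push-forward genuinely land in the Bruhat--Schwartz spaces, so that every integral converges and the formulas make sense; and that the identification coming from (\ref{3.3.2}) is applied to the correct exact triple with the maps pointing the right way. Once the exact triple is in place, the core equality is essentially Fubini's theorem already encoded in the definition (\ref{3.3.2}), so no genuinely new analytic input is required.
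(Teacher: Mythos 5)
Your proposal is correct, and in fact the paper states Lemma \ref{lemma3.3.1} without proof, treating it as a routine consequence of the definitions (\ref{3.3.1}) and (\ref{3.3.2}) --- which is precisely what your verification supplies: the pointwise identity $(p'p)_* = p'_*p_*$ reduces, after choosing lifts, to the iterated integral built into the identification (\ref{3.3.2}) for the triple $Ker(p)\to Ker(p'p)\to Ker(p')$, and invariance of Haar measure handles independence of the lifts $a$ and $z$. Your one genuinely non-trivial checkpoint --- that the restriction $p\colon p^{-1}(Ker(p'))\to Ker(p')$ is again an admissible epic, which follows since $p^{-1}(Ker(p'))$ is closed and saturated and an open surjection restricted to a saturated preimage remains open onto the subspace --- is exactly what legitimizes applying (\ref{3.3.2}), so nothing is missing.
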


       Let $( G , {\cal U} )$ be an LCA(2) group.   For $S_1 ,S_2, T \in {\cal U}$ with $ T \subset S_1 , S_2 $,  the quotients $ S_1 / T $ and $S_2 / T$ are objects in LCA.
       Consider the one-dimensional space  $ \mu (S_1 / T )^\vee \otimes_{\Bbb C}  \mu ( S_2 / T) $. Suppose $T_1 \subset T$.
    Then the exact triple
    \[      T /   T_1  \to  S_i / T_1 \to  S_i / T \]
    gives an isomorphism $  \mu ( T / T_1 ) \otimes_{\Bbb C}   \mu ( S_i / T ) \to   \mu ( S_i  / T_1 ) $, hence isomorphisms
    \[  \mu ( S_1 / T )^\vee \otimes  \mu ( S_2 / T )  \to   \mu ( S_1 / T )^\vee \otimes   \mu ( T / T_1 )^\vee \otimes      \mu ( T / T_1 )\otimes  \mu ( S_2 / T )
       \to  \mu ( S_1 / T_1 )^\vee \otimes  \mu ( S_2 / T_1 ) . \]
     We define the one-dimensional space $ \mu ( S_1 , S_2 ) $ as
  \[  \mu ( S_1 , S_2 ) =   \lim_{\longrightarrow}   \mu (S_1 / T )^\vee \otimes_{\Bbb C}  \mu ( S_2 / T)     \]
   where the direct limit is taken over the filtered direct system $\{ T  \in {\cal U} \, | \,   T\subset  S_1 , S_2  \}$.
      By construction  the natural map
   $  \mu (S_1 / T )^\vee \otimes_{\Bbb C}  \mu ( S_2 / T) \to \mu ( S_1 , S_2 )$ is an isomorphism.
 For $S_1 , S_2 , S_3 \in {\cal U}$, there is a  natural isomorphism
     \begin{equation}\label{3.3.3}    \mu ( S_1 , S_2 ) \otimes  \mu ( S_2 , S_3 ) \to   \mu ( S_1 , S_3 ) .\end{equation}

   Fix $V_0 \in {\cal U} $.  We want to  define the Bruhat-Schwartz space relative to $V_0$ on $G$.  First for $V , U \in {\cal U} $ with $V \subset U$,
    we let
    \[  {\cal S}_{V_0} ( V , U )  := {\cal S} ( U  /  V ) \otimes \mu ( V , V_0 ), \]
    where ${\cal S} ( U  /  V )$ is the usual Bruhat-Schwartz space on the LCA group $U/ V$.
   For any $V_1 , V_2 , V_3 \in {\cal U}$ with  $ V_1 \subset V_2 \subset V_3$, we have an exact triple in LCA
    \begin{displaymath}
\xymatrix{
  V_2 / V_1  \ar[r]^{i}    &  V_3 / V_1  \ar[r]^{p}
&  V_3 / V_2 . }
\end{displaymath}
 So we have the map $ i^* :  {\cal S} (   V_3 / V_1 ) \to  {\cal S} (   V_2 / V_1 ) $, which induces
 \[ i^*  :  {\cal S}_{V_0} ( V_1 , V_3 ) \to  {\cal S}_{V_0} ( V_1 , V_2 ),    \; \; \;  f\otimes \mu  \mapsto  i^* ( f) \otimes \mu, \]
 and the map
 \[ p_*  :     {\cal S}_{V_0} ( V_1 , V_3 ) \to  {\cal S}_{V_0} ( V_2 , V_3 ), \; \; \;  f\otimes \mu  \mapsto  p_* ( f\otimes \mu_1) \otimes \mu_2 \]
 where we decompose $\mu \in \mu (V_1 , V_0 ) $ as $ \mu_1\otimes \mu_2 $ for $ \mu_1 \in \mu ( V_1 , V_2 ) $ and $\mu_2 \in  \mu ( V_2 , V_0 ) $
 using the isomorphism (\ref{3.3.3}).

\begin{lemma}\label{lemma3.3.2}
 If  $V_1 , V_2 , V_3 , V_4 \in {\cal U} $ satisfy $V_1 \subset V_2 \subset V_3 \subset V_4 $, then
 the following diagrams are commutative
  \begin{displaymath}
\xymatrix{
 {\cal S}_{V_0} ( V_1 , V_4 ) \ar[r]^{i^*} \ar[dr]^{i^*} & {\cal S}_{V_0} ( V_1 , V_3 ) \ar[d]^{i^*} \\
         & {\cal S}_{V_0} ( V_1 , V_2 )
}
\end{displaymath}
  \begin{displaymath}
\xymatrix{
 {\cal S}_{V_0} ( V_1 , V_4 ) \ar[r]^{p_*} \ar[dr]^{p_*} & {\cal S}_{V_0} ( V_2 , V_4 ) \ar[d]^{p_*} \\
         & {\cal S}_{V_0} ( V_3 , V_4 )
}
\end{displaymath}
 \begin{displaymath}
\xymatrix{
 {\cal S}_{V_0} ( V_1 , V_4 ) \ar[r]^{i^*} \ar[d]^{p_*} & {\cal S}_{V_0} ( V_1 , V_3 ) \ar[d]^{p_*} \\
    {\cal S}_{V_0} ( V_2 , V_4 )   \ar[r]^{i^*}    & {\cal S}_{V_0} ( V_2 , V_3 )
}
\end{displaymath}
  \end{lemma}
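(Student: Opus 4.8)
The plan is to reduce each of the three diagrams to the corresponding statement for honest LCA groups, which is furnished by Lemma~\ref{lemma3.3.1}, while carrying the one-dimensional measure factors along through the canonical isomorphisms (\ref{3.3.3}). The guiding observation is that in every arrow the map on the ${\cal S}(\cdot/\cdot)$-factor is either a restriction $i^*$ or a fibre integration $p_*$ attached to a map of LCA groups, and the accompanying $\mu(\cdot,V_0)$-factor is either left untouched (under $i^*$) or split off by the \emph{same} application of (\ref{3.3.3}) on both routes (under $p_*$). Thus commutativity will follow once it is verified on the ${\cal S}$-factor, and the measure bookkeeping is forced to agree by the naturality of (\ref{3.3.3}).

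For the first diagram, the three restriction maps are induced by the closed embeddings $V_2/V_1\hookrightarrow V_3/V_1\hookrightarrow V_4/V_1$, and the factor $\mu(V_1,V_0)$ is common to all three spaces and transported identically. Hence the triangle collapses to the functoriality of restriction along this chain, which is exactly Lemma~\ref{lemma3.3.1}(1). For the second diagram, the two successive integrations are along the projections $V_4/V_1\to V_4/V_2\to V_4/V_3$, whose kernels $V_2/V_1$ and $V_3/V_2$ assemble, via the exact triple $V_2/V_1\to V_3/V_1\to V_3/V_2$ and the isomorphism (\ref{3.3.2}), into the kernel $V_3/V_1$ of the composite. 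On the ${\cal S}$-factor the needed identity $(p'p)_*=p'_*p_*$ is precisely Lemma~\ref{lemma3.3.1}(2). What remains is to see that the iterated splitting $\mu(V_1,V_0)\cong\mu(V_1,V_2)\otimes\mu(V_2,V_0)$ followed by $\mu(V_2,V_0)\cong\mu(V_2,V_3)\otimes\mu(V_3,V_0)$ agrees with the direct splitting $\mu(V_1,V_0)\cong\mu(V_1,V_3)\otimes\mu(V_3,V_0)$; this is the associativity of (\ref{3.3.3}), under which $\mu(V_1,V_2)\otimes\mu(V_2,V_3)$ is identified with $\mu(V_1,V_3)$ compatibly with the identification of kernels used in Lemma~\ref{lemma3.3.1}(2).

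The third diagram is the substantive one, as it is a base-change statement not directly contained in Lemma~\ref{lemma3.3.1}. The key is to record that the square of LCA groups
\[
\begin{array}{ccc}
V_3/V_1 & \longrightarrow & V_4/V_1\\
\downarrow & & \downarrow\\
V_3/V_2 & \longrightarrow & V_4/V_2
\end{array}
\]
with horizontal arrows the closed embeddings and vertical arrows the projections with kernel $V_2/V_1$, is Cartesian: $V_3/V_1$ is exactly the preimage of $V_3/V_2\subset V_4/V_2$ under $V_4/V_1\to V_4/V_2$, and the embedding $V_3/V_1\hookrightarrow V_4/V_1$ restricts to the identity on the common kernel $V_2/V_1$. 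Writing $\mu=\mu_1\otimes\mu_2$ with $\mu_1\in\mu(V_1,V_2)$ and $\mu_2\in\mu(V_2,V_0)$ as in the definition of $p_*$, the factor $\mu_2$ rides along unchanged on both routes, so it suffices to verify, for $f\in{\cal S}(V_4/V_1)$, the identity $p_*(i^*f\otimes\mu_1)=i^*(p_*(f\otimes\mu_1))$ in ${\cal S}(V_3/V_2)$. This is a one-line fibre-integral computation: for $\bar x\in V_3/V_2$ choose a lift $\tilde x\in V_3/V_1$; by commutativity of the square the element $i(\tilde x)$ is a lift of the image $i'(\bar x)\in V_4/V_2$, and since $i$ identifies the kernels, both sides equal $\int_{V_2/V_1}f(i(\tilde x)+t)\,d\mu_1(t)$ by (\ref{3.3.1}).

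I expect the main obstacle to be this third diagram, specifically the clean identification of the above square as Cartesian with a common kernel, since it is the only place where a genuine base-change computation (rather than a transitivity already recorded in Lemma~\ref{lemma3.3.1}) is required. The measure bookkeeping in the second diagram is the other point demanding care, but it is purely formal once the associativity of (\ref{3.3.3}) is invoked.
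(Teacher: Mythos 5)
Your proposal is correct and follows the same route as the paper, whose entire proof is the one-line remark that the lemma ``follows directly from Lemma \ref{lemma3.3.1}''. Your reduction of the first two diagrams to Lemma \ref{lemma3.3.1}(1),(2) together with the associativity of (\ref{3.3.3}) matches that citation, and your explicit fibre-integral verification of the third (mixed) square --- identifying the Cartesian square with common kernel $V_2/V_1$ --- correctly supplies the base-change step that the paper leaves implicit, since it is not literally contained in Lemma \ref{lemma3.3.1}.
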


 This lemma follows directly from Lemma \ref{lemma3.3.1}.

 A family  $\{ h_{V , U } \}$ parametrized by  $V \subset U$ in  ${\cal U}$,
   where $  h_{V , U }   \in {\cal S}_{V_0} ( V, U)$,  is called a {\bf compatible family} if
    for any
    $V_1 \subset V_2 \subset V_3$ in ${\cal U}$,
   \begin{equation}\label{3c}
  i^* h_{V_1, V_3}  =h_{ V_1 , V_2 },  \; \; \; \;  p_*  h_{V_1 , V_3 }  =  h_{V_2 , V_3}.
   \end{equation}
  We denote by $    {\cal S}_{V_0} (G) $
   the space of all compatible families  $\{ h_{V , U } \}$ and call it the
  Bruhat-Schwartz space of $G$.
   It is clear that ${\cal S}_{V_0} (G)$ is a vector space.
   When the choice of $V_0$ is clear from the context, we write ${\cal S} ( G)$ for   ${\cal S}_{V_0} (G)$.
We have the projective maps $ {\cal S}_{V_0} (G)  \to   {\cal S}_{V_0} ( V , U ) :    \{ h_{V , U } \} \mapsto  h_{V , U } $.

  A subset  $ {\cal N} \subset {\cal U} \times {\cal U}$ is called a {\bf net}
   if  $(V_1 , V_2 ) \in {\cal N}$ implies that  $ V_1 \subset V_2$ and
  for any finitely many $S_1 , \dots , S_n $, there is $ (V_1 , V_2 )\in {\cal N}$ such that $ V_1 \subset S_1 , \dots , S_n \subset V_2 $.  The conditions
(\ref{3c}) imply that an element $\{ h_{V_1 , V_2 } \} \in {\cal S}_{V_0} (G)$ is determined by
 the subfamily  $\{ h_{V_1 , V_2 } \}$ over $(V_1 , V_2 ) \in {\cal N}$.
 Conversely  if we have a family of elements $h_{V_1 , V_2 } \in {\cal S} ( V_1 , V_2 )$ parametrized by $(V_1 , V_2 ) \in {\cal N}$ such that
   the compatiblility conditions (\ref{3c}) are satisfied, then the family can be extended to a unique element in  ${\cal S}_{V_0} (G)$ as follows.
    For any $V, U \in {\cal U} ( G ) $ with $ V \subset U$, choose $( V_1 , V_2 ) \in {\cal N} $ such that  $    V_1 \subset V \subset U \subset V_2$ and
     put
     \begin{equation}\label{3ext}
               h_{V , U } =    p_*  i^*     h_{V_1 , V_2 }
   \end{equation}
   with $i^* :   {\cal S}_{V_0} ( V_1, V_2  ) \to    {\cal S}_{V_0} ( V_1, U )$
     and $p_* :   {\cal S}_{V_0} ( V_1, U )  \to {\cal S}_{V_0} ( V, U)$.
      It follows from Lemma \ref{lemma3.3.2} that  $h_{V , U }$ is independent of the choice of $(V_1 , V_2 ) \in {\cal N}$ and
       $\{ h_{V, U} \} \in S_{V_0} ( G ) $.

 Next we introduce the dual object $G^*$ of an LCA(2) group $G$.  View $\Bbb T$ as LCA(2) group as in Example 3.1.4.
  It is easy to see that a group homomorphism $\chi :   G \to \Bbb T$ is a morphism in LCA(2) iff there is
  $ S \in {\cal U} ( G ) $ such that $ \chi |_{S} =1 $ and for every $ S' \in {\cal U} ( G ) $ with $ S \subset S' $,
   $\chi $ induces a continuous map $ S' / S \to \Bbb T$.
     Let $G^* = Mor ( G , \Bbb T ) $ with the obvious group structure. For each $S \in { \cal U } ( G ) $,
   let $ S^{\perp} = \{ \chi \in G^* \, | \; \chi|_S= 1 \}$, and $ {\cal U } ( G^* )=\{S^\perp \,|\, S\in \cal U(G)\}$.   If $S_1 \subset S_2 $, then $S_2^{\perp } \subset S_1^{\perp}$ and $  S_1^{\perp}  /  S_2^{\perp }  \cong ( S_2 / S_1 )^* $
   has an LCA structure. It is easy to check that $ ( G^* , {\cal U} ( G^* ) )$ is an LCA(2) group, which we call the dual group of $G$.

  Fix $V_0  \in {\cal U} ( G ) $.   For any $V\subset U$ in ${\cal U} ( G )$, we can define the Fourier transform
   \[   F:    {\cal S}_{V_0} ( V , U )  \to   {\cal S}_{V_0^{\perp} } ( U^{\perp} , V^{\perp} )
   \]
   as follows.
   For $ f \otimes \mu_1 \in  {\cal S}_{V_0} ( V , U ) = {\cal S} ( U / V ) \otimes \mu ( V, V_0 ) $,  we write
 $ \mu_1 = \mu \otimes \mu_2 $ where $\mu \in \mu ( V , U ) $, $\mu_2 \in \mu ( U , V_0 )$. Then
 \begin{equation}\label{3.3.10}
    F:   f \otimes \mu_1  \mapsto \int_{U/ V}  f ( x ) ( x, x^* )  d\mu ( x ) \otimes \mu_2 .
\end{equation}
 For any LCA group $A$ with dual group $A^*$, we have
 a pairing $ \mu ( A) \times \mu ( A^* ) \to {\Bbb C}$ defined by the equation
 \[    \int_{A^*} \left(  \int_A f ( x ) ( x , x^* ) d \mu  \right)d\mu^* = ( \mu , \mu^* ) f ( 0 ) \]
 for $f\in {\cal S} ( A ) $.  In this way we identify $\mu ( A) $ with $\mu ( A^* )^\vee$.
  This allows us to identify $ \mu ( V , U ) $ with $ \mu ( U^{\perp} , V^{\perp } ) $ and the identification is compatible with the
 isomorphisms $  \mu ( V , W ) \cong    \mu ( V , U ) \otimes \mu ( U , W )$,  $  \mu ( W^{\perp} , V^{\perp} ) \cong  \mu ( W^{\perp} , U^{\perp})
   \otimes \mu (  U^{\perp} , V^{\perp } ) $.
 The right hand side of (\ref{3.3.10}) is in $ {\cal S}_{V_0^{\perp} } ( U^{\perp} , V^{\perp} )$.
 If $ \{ h_{ V , U } \}  \in  {\cal S}_{V_0 } ( G ) $, then the family $  F h_{ V , U } \in   {\cal S}_{V_0^{\perp} } ( U^{\perp} , V^{\perp} )$
  also satisfies (\ref{3c}) for $G^*$, hence gives an element in ${\cal S}_{V_0^{\perp}} ( G ^* )  $. Therefore we have
 an isomorphism
\[  F:   {\cal S}_{V_0 } ( G ) \to {\cal S}_{V_0^{\perp}} ( G ^* ) , \]
which we call the Fourier transform.

Let ${\rm Aut } ( G )$ denote the automorphism group of $G$ as an LCA(2) group.
 Set
  \begin{equation}\label{3cext}
   \widetilde {\rm Aut } ( G ) = \{  ( \alpha , \lambda ) \; | \;  \alpha \in  {\rm Aut } ( G ) , \lambda \in \mu (  V_0 \alpha^{-1} , V_0 ) ,  \lambda \ne 0  \}
  \end{equation}
 with the composition given by
\begin{equation}\label{3com}
  ( \alpha_1 , \lambda_1 ) ( \alpha_2 , \lambda_2 ) = ( \alpha_1 \alpha_2 ,   \lambda_1 ( \lambda_2 \alpha_1^{-1} )  )
 \end{equation}
  where $ \lambda_2 \mapsto  \lambda_2 \cdot \alpha_1^{-1} $ is the obvious map  $ \mu ( V_0 \alpha_2^{-1} ,  V_0 ) \to
   \mu ( V_0 \alpha_2^{-1} \alpha_1^{-1} ,  V_0 \alpha_1^{-1}  )$
  induced by $\alpha_1^{-1}$.
  The group  $\widetilde {\rm Aut } ( G )$ is a central extension of $ {\rm Aut } ( G ) $ by ${\Bbb C}^\times $, and
it acts on ${\cal S} ( G ) $ as follows. Let $ \phi \in {\cal S} ( G ) $, whose $(V_1 , V_2 )$-component
   is  $\phi_{V_1 , V_2 } ( x ) \otimes \mu_{V_1, V_2 } \in {\cal S} ( V_2 / V_1 ) \otimes \mu ( V_1 , V_0 ) $. Let
$ \alpha' = ( \alpha , \lambda ) \in \widetilde {\rm Aut } ( G )$.   Then $ \pi ( \alpha' ) \phi $ has its $(V_1 , V_2 )$-component given by
  \begin{equation}\label{3act}
      \phi_{V_1\alpha , V_2\alpha  } ( x\alpha  ) \otimes   ( \mu_{V_1\alpha , V_2 \alpha  } \alpha^{-1} )  \lambda ,
      \end{equation}
 where $ \mu_{V_1\alpha, V_2 \alpha } \mapsto   \mu_{V_1\alpha, V_2 \alpha } \alpha^{-1} $ is
 the obvious map $ \mu ( V_1\alpha  , V_0 ) \to \mu ( V_1 , V_0 \alpha^{-1} ) $
 induced by $\alpha $.

  Suppose $f:  G_1 \to G_2 $ is an admissible monic in LCA(2). Then  there is a basis ${\cal B} \subset {\cal U} (G_2)$ such that
   $ \{  f^{-1} ( B ) \, | \, B \in {\cal B} \}$ is a basis of $G_1$, and for any $ U_1 \subset U_2$ in $ {\cal U} (G_2)$
   the map  $f:   f^{-1} ( U_2 )  / f^{-1} (  U_1 )  \to   U_2 / U_1$  is a closed embedding.  Let $V_0 \in {\cal B} $.
     Assume further that the image of $f^{-1}(U_2)/f^{-1}(U_1)$ is an open compact subgroup of
   $U_2 / U_1$.  In this case we can define the characteristic element $1_{G_1} $ of $G_1$ in ${\cal S}_{V_0 } ( G_2)$ as follows.
   It is clear that ${\cal N} = \{  ( V , U ) \in {\cal B}^2 \, | \, V  \subset V_0 \subset  U \} $ is a net of $G_2$.
    For each $ V \in {\cal B}$ with $ V \subset  V_0 $, let $ \mu_{G_1 , V }  \in  \mu ( V , V_0 )$ be the unique Haar measure
     of $ V_0 / V $ such that $\mu (   G_1 \cap V_0 /  G_1 \cap V )  = 1 $,   and $ 1_{ G_1 ,  U / V } \in {\cal S} ( U / V  ) $
       be the characteristic function of the open compact subgroup $  G_1 \cap U /  G_1 \cap V $ of  $U / V$. Then
        the family $ \{ 1_{ G_1 ,  U / V } \otimes   \mu_{G_1 , V } \in {\cal S}_{V_0} ( V , U ) \}$, as $(V, U ) $ runs through ${\cal N}$, is  compatible.
     We call the Bruhat-Schwartz function determined by this family the characteristic element of $G_1$ and denote it by  $1_{G_1} $.
One can show that it is independent of  $\cal B$.

   \section{ Weil index for quadratic character of  LCA(2)} \label{s5}

 In this section we generalize the results about LCA groups in Section \ref{s3}  to LCA(2) groups.

\subsection{Bicharacter and quadratic characters for  LCA(2)}

  Let $G, H $ be  objects in LCA(2). A map $f: G \times H\to \Bbb T$ is called a {\bf bicharacter} if (1) for each $y \in H$, $x\mapsto f(x , y )$ is a
 character of $G$, i.e., an LCA(2) morphism $G\to\Bbb T$,  and for each $x\in G$, $y \mapsto f( x , y ) $ is a character of $H$;
  (2)  for each $ V \in {\cal U} ( H ) $, there is $ U \in {\cal U} ( G)$
  such that $ f ( U \times V ) = 1$, and for  each $ U \in {\cal U} ( G ) $, there is $ V \in {\cal U} ( H)$ such that $ f ( U \times V ) = 1$.

  If  $ \alpha :  G \to H , x \mapsto x \alpha $ is a morphism in LCA(2), then we have the dual morphism
$\alpha^* : H^* \to G^*$ such that
\[   ( x \alpha , y^* ) =  ( x  , y^* \alpha^* ) \]
for every $x\in G, y^* \in H^* $.

  Let  $f: G\times H  \to \Bbb T$ be a bicharacter. For each $y \in H$,  $ x \mapsto f( x , y )$ is a character of $ G$, so we have a map
  $ \alpha : H \to G^* $ and it is easy to prove that $\alpha $ is a morphism in LCA(2). Similarly we have
  an LCA(2) morphism $ \alpha^* : G \to H^*$.  Then
  \[ f ( x , y ) = ( x , y \alpha )  = ( x \alpha^* , y ). \]
               If $G = H$, then $f$ is symmetric
  iff $\alpha = \alpha^*$.

    A map $ h : G \to \Bbb T$ is called a {\bf  quadratic character} if for every $U\in {\cal U}$, there is  $ V \in {\cal U} $ such that
    \begin{equation}\label{4.1}    h ( u + v ) = h ( u ) , \; \; \; {\rm for \; all } \; u \in U, \; v \in V \end{equation}
  and
  \[    ( x , y ) \mapsto h( x + y ) h(x)^{-1} h( y )^{-1}  \]
is a bicharacter on $G\times G$.  Then there is
 an LCA(2) morphism  $\rho : G \to G^* $ satisfying
   \begin{equation}\label{4.2}
  h( x + y ) h(x)^{-1} h( y )^{-1} = ( x , y \rho ) .
  \end{equation}
 We say $h$ is non-degenerate if $\rho  $ is an isomorphism from $G$ to $G^*$.

\begin{lemma}\label{lemma4.1}  Let $h$ be a non-degenerate quadratic character of $G$, and $\rho : G \to G^*$ be the isomorphism defined by (\ref{4.2}). Then the set of
 pairs $(U^\perp\rho^{-1}, U)$, $U\in{\cal U}$ satisfying

(1) $U^\perp\rho^{-1}\subset U$,

(2) $h ( u + v ) = h ( u ) $  for all $u\in U$ and $v\in U^\perp\rho^{-1}$
\\
form a net of ${\cal U}(G)$, and for any such pair  $h$ descends to a
non-degenerate quadratic character  on the LCA group $U/U^\perp\rho^{-1}$.
\end{lemma}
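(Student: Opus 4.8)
The plan is to recast conditions (1) and (2) in terms of the symmetric non-degenerate form attached to $h$, and to reduce the whole statement to the monotone behaviour of orthogonal complements under $\rho$. Write $b(x,y)=(x,y\rho)$ for the bicharacter of (\ref{4.2}); since $b(x,y)=h(x+y)h(x)^{-1}h(y)^{-1}$ is manifestly symmetric in $x$ and $y$, the associated morphism satisfies $\rho=\rho^*$. For $S\in{\cal U}$ the subgroup $S^\perp\rho^{-1}$ is exactly the $b$-orthogonal complement $\{x\,|\,b(x,s)=1 \text{ for all } s\in S\}$, and because $\rho$ is an isomorphism one has the biduality $(S^\perp\rho^{-1})^\perp\rho^{-1}=S$ for every $S\in{\cal U}$ (this is the identification $G^{**}=G$ from Section \ref{ss4.3} transported through $\rho$, together with the fact that $\perp$ reverses inclusions). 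The first thing I would record is that condition (2) is equivalent to $h|_{U^\perp\rho^{-1}}=1$: indeed $h(u+v)=h(u)h(v)b(u,v)$, and for $u\in U$, $v\in U^\perp\rho^{-1}$ the factor $b(u,v)$ is $1$ by definition of the orthogonal complement, so (2) holds iff $h(v)=1$ for all $v\in U^\perp\rho^{-1}$.

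For the net property, the key point is that enlarging $U$ shrinks $U^\perp\rho^{-1}$, so one can force $U$ coisotropic, its complement small, and $h$ trivial on it, all at once. First I would produce, from the defining property (\ref{4.1}) of a quadratic character applied to any member of ${\cal U}$ (with $u=0$), a subgroup $V_0\in{\cal U}$ with $h|_{V_0}=1$. Given $S_1,\dots,S_n\in{\cal U}$, use LCA(2)-2 repeatedly to choose a common upper bound $S_*\supset S_i$ and a common lower bound $W\in{\cal U}$ of $S_1,\dots,S_n$ and $V_0$, so that $W\subset V_0$. Then choose $U\in{\cal U}$ containing both $S_*$ and $W^\perp\rho^{-1}$ (LCA(2)-2 again; note $W^\perp\rho^{-1}\in{\cal U}$ since $\rho^{-1}$ is an isomorphism in LCA(2) and $W^\perp\in{\cal U}(G^*)$). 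I would then verify that $(U^\perp\rho^{-1},U)$ lies in the claimed family and sandwiches the $S_i$: from $U\supset S_*$ we get $S_i\subset U$, while $U\supset W^\perp\rho^{-1}$ gives, after applying $\perp\rho^{-1}$, the inclusion $U^\perp\rho^{-1}\subset(W^\perp\rho^{-1})^\perp\rho^{-1}=W\subset S_i$. This same chain yields $U^\perp\rho^{-1}\subset W\subset U$, which is condition (1), and $h|_{U^\perp\rho^{-1}}=1$ because $U^\perp\rho^{-1}\subset W\subset V_0$, which is condition (2). Hence the pairs form a net.

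For the descent, condition (2) shows $h$ is constant on cosets of $U^\perp\rho^{-1}$ in $U$, so $\bar h(\bar x):=h(x)$ is well defined on the LCA group $\bar U=U/U^\perp\rho^{-1}$. To see $\bar h$ is a quadratic character I would descend $b$: since $b$ is symmetric and $b(x,v)=1$ whenever $x\in U$ and $v\in U^\perp\rho^{-1}$, the restriction of $b$ to $U\times U$ factors through $\bar U\times\bar U$, giving $\bar h(\bar x+\bar y)\bar h(\bar x)^{-1}\bar h(\bar y)^{-1}=\bar b(\bar x,\bar y)$ with $\bar b$ a continuous bicharacter on $\bar U$; continuity of $\bar h$ itself follows from the local constancy (\ref{4.1}) of $h$, which makes $h$ invariant under a small subgroup of ${\cal U}$ contained in $U^\perp\rho^{-1}$. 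For non-degeneracy I would identify $\bar U^*=U\rho/U^\perp$ exactly as in the proof of Proposition \ref{prop2.5} and note that the induced map $\bar\rho:\bar U\to\bar U^*$ is $\bar y\mapsto\overline{y\rho}$; since $\rho$ carries the pair $(U^\perp\rho^{-1},U)$ isomorphically onto $(U^\perp,U\rho)$, it induces an isomorphism on the quotients in LCA, so $\bar\rho$ is an isomorphism and $\bar h$ is non-degenerate.

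The main obstacle is the net property, and specifically arranging conditions (1) and (2) simultaneously with the prescribed bounds. The device that makes it work is the inclusion-reversing, complement-shrinking behaviour of $S\mapsto S^\perp\rho^{-1}$ together with the biduality $(S^\perp\rho^{-1})^\perp\rho^{-1}=S$: a single sufficiently large $U$ then controls $U^\perp\rho^{-1}$ from above, forces it below every $S_i$, and places it inside a subgroup where $h$ is trivial. The remaining points—that $W^\perp\rho^{-1}$ and $U\rho$ genuinely lie in the relevant ${\cal U}$-classes and that the quotient maps induced by $\rho$ are isomorphisms in LCA—are routine consequences of $\rho$ being an isomorphism in LCA(2) and of the duality formalism of Section \ref{ss4.3}.
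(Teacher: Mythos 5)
Your treatment of the net property is correct and rests on the same mechanism as the paper's proof, just packaged differently. Given $S_1\subset S_2$, the paper applies (\ref{4.1}) directly to $S_2$ to obtain $V\subset S_1$ with $h(x+v)=h(x)$ for $x\in S_2$, deduces $h|_V=1$ and $S_2\rho\subset V^{\perp}$, and then takes $U:=V^{\perp}\rho^{-1}$ on the nose, so that $S_2\subset U$ and $U^{\perp}\rho^{-1}=V$ come out simultaneously. You instead harvest from (\ref{4.1}) a single $V_0$ with $h|_{V_0}=1$, record that condition (2) is equivalent to $h|_{U^{\perp}\rho^{-1}}=1$ (correct: the cross term $(u,v\rho)$ is killed because $v\rho\in U^{\perp}$), and then sandwich with any $U$ containing $S_*$ and $W^{\perp}\rho^{-1}$. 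Both arguments stand on the same two facts that the paper also uses without proof: that $\rho^{-1}$ carries ${\cal U}(G^*)$ into ${\cal U}(G)$ (the paper needs this for $V^{\perp}\rho^{-1}$ exactly as you need it for $W^{\perp}\rho^{-1}$; it can be proved by the intersection-with-a-preimage trick from the proof of Lemma \ref{lemma3.2} together with LCA(2)-4), and the biduality $(S^{\perp}\rho^{-1})^{\perp}\rho^{-1}=S$, which is precisely the paper's unproved step ``$U^{\perp}\rho^{-1}=V$''. Your version invokes (\ref{4.1}) only once and makes the content of condition (2) transparent; the paper's version produces $U$ canonically without the extra appeal to LCA(2)-2. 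Your identification $\bar U^*=U\rho/U^{\perp}$ and the isomorphy of $\bar\rho$ are the same as in Proposition \ref{prop2.5} and are fine.

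The one step whose justification is genuinely wrong is your continuity argument for $\bar h$. The subgroup supplied by (\ref{4.1}) can indeed be taken inside $U^{\perp}\rho^{-1}$, but invariance of $h$ under a subgroup contained in $U^{\perp}\rho^{-1}$ is just a weakening of condition (2): it descends to invariance under the \emph{trivial} subgroup of $\bar U=U/U^{\perp}\rho^{-1}$, hence yields well-definedness and nothing more. Nor can any invariance argument give continuity here: the quotients arising in the paper's applications can be archimedean (for instance $\prod_i k_{v_i}^{m_i}$ with $k={\Bbb R}$ in Section \ref{ss6.1}), and a nontrivial quadratic character of ${\Bbb R}$ such as $e^{\pi i x^2}$ is invariant under no nontrivial closed subgroup and is certainly not locally constant, so ``local constancy'' is not what (\ref{4.1}) expresses. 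In fairness, the paper's own proof establishes only the net property and is silent on the descent clause; and under the literal definitions, (\ref{4.1}) plus the bicharacter axioms do not force continuity at all --- viewing ${\Bbb R}$ as an LCA(2) group as in Example 3.1.4 and multiplying $e^{\pi i x^2}$ by a discontinuous homomorphism ${\Bbb R}\to{\Bbb T}$ disturbs neither axiom. The intended reading is that a quadratic character is required to induce continuous functions on the subquotients $U/V$ attached to the invariance pairs of (\ref{4.1}); with that convention, continuity of $\bar h$ for the pair $(U^{\perp}\rho^{-1},U)$ is immediate from condition (2) itself, and your proof is complete once the local-constancy remark is replaced by this.
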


\begin{proof}   For any $S_1 , S_2\in\cal U(G) $ we need to find a pair $(U^\perp\rho^{-1}, U)$ as in the lemma
  such that $ U^\perp\rho^{-1} \subset    S_i  \subset  U$, $i=1,2$.  By Axiom LCA(2)-2 (Section \ref{ss4.1}),
 we may assume that $S_1 \subset S_2$.    By (\ref{4.1}),  there is $V\in {\cal U}$ such that $ V \subset S_2$
    and
  $h ( x + v ) = h ( x ) $ for all $ x \in S_2 $ and $v\in V$.    By Axiom LCA(2)-2, we may assume  that $ V \subset S_1$.
 The relation    $h ( x + v ) = h ( x ) $ implies that
   \[   h ( v ) ( x\rho  , v ) = 1 \; \; \; \textrm{ for all } \;   x \in S_2 , \ v \in V.  \]
 Taking $x=0$, we have $ h ( v ) = 1 $ for all $v\in V$, which implies that $( x\rho , v  ) = 1$.
 Hence $ S_2 \rho \subset V^{\perp} $,  $S_2 \subset V^{\perp} \rho^{-1} $.   Set $U = V^\perp\rho^{-1}$, then $ U^\perp\rho^{-1} = V $.
The pair $(U^\perp\rho^{-1}, U)$ satisfies all the required properties.
\end{proof}

 \subsection{Heisenberg group, automorphisms and actions on Bruhat-Schwartz space.}
 Let $G$ be an LCA(2) group.
   Similar to (\ref{2.2.1}), we define  $F:  (G \times G^* ) \times (G \times G^* ) \to \Bbb T$,
     \[  F( ( x_1 , x_1^* ) , (x_2 , x_2^* ) ) = ( x_1 , x_2^* ). \]
It is clear that $F$ is a bicharacter of the LCA(2) group $G\times G^*$.  The Heisenberg group $A(G)$ associated to $G$ is
 \[ A(G) = G\times G^* \times \Bbb T \]
 with the group
 law given by (\ref{2.2.2}).  We will write an automorphism of $G\times G^*$ by a $2\times 2$ matrix as in (\ref{2.2.3}), which acts on $G\times G^*$ from the
 right.  The group of automorphisms of $G\times G^*$ preserving the skew symmetric form $F(z_1 , z_2 ) F(z_2 , z_1 )^{-1}$ is denoted by $Sp ( G )$.

  It is easy to see that an automorphism $s$ of $A(G)$  that fixes the center $\Bbb T$ pointwisely  can be written as
   $ (w , t) s = ( w \sigma , f ( w ) t )$, where $\sigma $ is an automorphism of the abstract group $G\times G^*$ and $f : G\times G^* \to \Bbb T$ is a map
 satisfying (\ref{2.2.5}). We denote by
$B_0 ( G) $
 the group of $s$ as above with $ \sigma $ an automorphism of $G\times G^*$ as an LCA(2) group and $f$ a quadratic
 character of $G\times G^*$.    If $ x \mapsto 2x $ is an automorphism of LCA(2) group $G$, then
 $B_0 ( G) =  Sp( G) \ltimes ( G \times G^* ) $. The proof is the same as  that of the similar result in Section \ref{subsection2.2}.

 We have the following special elements in $B_0 ( G )$, similar to those in Section \ref{subsection2.2}.
    If $\alpha \in {\rm Aut} (  G )  $ is an automorphism, then
   \[      d_0 ( \alpha ) = \left(  \left( \begin{matrix} \alpha  & 0 \\ 0 & \alpha^{*-1} \end{matrix} \right) , 1 \right) \]
is in $B_0 (G) $.
 If $\gamma : G^* \to G $ is an isomorphism of $G^*$ onto $G$, then
 \[  d_0' ( \gamma ) =
 \left(  \left( \begin{matrix} 0  &  -\gamma^{*-1} \\ \gamma & 0  \end{matrix} \right) , f \right)
 \; \; \; {\rm with } \; \; \; f( x , x ^* ) = ( x , -x^* ) .\]
 If $h: G \to \Bbb T$ is a quadratic character with
 $  \rho : G \to G^*$ as in  (\ref{4.2}),
 then
\[  t_0 ( h ) =  \left(  \left( \begin{matrix} 1  &  \rho  \\ 0  & 1  \end{matrix} \right), h (x) \right)
 \]
is in $B_0 ( G)$. The map $h\mapsto t_0 ( h)$ is an embedding of the group of quadratic characters on $G$
 into $B_0 ( G ) $.
Similarly if $h: G^* \to \Bbb T$ is a quadratic character, then
 \[  t'_0 ( h ) =  \left(  \left( \begin{matrix} 1  &  0  \\ \rho  & 1  \end{matrix} \right) , h (x^* ) \right)
  \]
and $h\mapsto t_0' ( h)$ is an embedding of the group of
 quadratic characters on $G^*$ into $B_0 ( G ) $.

 The following is a generalization of Theorem \ref{thm2.2.1} and theirs proofs are exactly the same.

\begin{theorem}\label{thm4.2}
 Let $h(x)$ be a non-degenerate quadratic character of $G$  satisfying $ h ( x ) = h( -x ) $,  and $  \rho : G \to G^*$ be as in  (\ref{4.2}).
    Then there is a group homomorphism
 $SL_{2} ( {\Bbb Z} ) \to B_0 ( G ) $ given by
   \[   \left( \begin{matrix}   1 & 1 \\ 0 & 1 \end{matrix} \right) \mapsto t_0 ( h ),  \; \; \;  \; \;  \left( \begin{matrix}   0 & -1 \\ 1 & 0 \end{matrix} \right) \mapsto d_0' ( \rho^{-1} ).  \]
\end{theorem}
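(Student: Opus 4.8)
The plan is to reduce the statement, exactly as in the proof of Theorem~\ref{thm2.2.1}, to the verification of two identities in the group $B_0(G)$. Recall from (\ref{sl2}) that $SL_2(\mathbb{Z})$ admits the presentation with generators $s,t$ subject to $s^4=1$ and $(ts)^3=s^2$, under which $s\mapsto\left(\begin{smallmatrix}0&-1\\1&0\end{smallmatrix}\right)$ and $t\mapsto\left(\begin{smallmatrix}1&1\\0&1\end{smallmatrix}\right)$. Assigning $t\mapsto t_0(h)$ and $s\mapsto d_0'(\rho^{-1})$, the existence of the desired homomorphism $SL_2(\mathbb{Z})\to B_0(G)$ is equivalent to the two relations
\[
d_0'(\rho^{-1})^4=1,\qquad \bigl(t_0(h)\,d_0'(\rho^{-1})\bigr)^3=d_0'(\rho^{-1})^2,
\]
that is, to (\ref{rel}).

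The first thing I would check is that $t_0(h)$ and $d_0'(\rho^{-1})$ are genuine elements of $B_0(G)$ in the LCA(2) sense defined just above the statement. Since $h(x)=h(-x)$, the morphism $\rho$ of (\ref{4.2}) is symmetric, so the matrix $\left(\begin{smallmatrix}1&\rho\\0&1\end{smallmatrix}\right)$ satisfies the symplectic conditions (\ref{2.2.4}) and $h(x)$ is a quadratic character of $G\times G^*$ fulfilling (\ref{2.2.5}); by non-degeneracy $\rho$ is an LCA(2) isomorphism, hence so is $\rho^{-1}\colon G^*\to G$, whence $\left(\begin{smallmatrix}0&-\rho\\\rho^{-1}&0\end{smallmatrix}\right)$ is an LCA(2) automorphism of $G\times G^*$ and $f(x,x^*)=(x,-x^*)$ is a quadratic character of $G\times G^*$. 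Thus both $t_0(h),d_0'(\rho^{-1})\in B_0(G)$.

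With membership in $B_0(G)$ secured, I would then obtain the two relations from the very same computations (\ref{2c1}) and (\ref{2c2}) that prove Theorem~\ref{thm2.2.1}. The point is that the group law of $B_0(G)$ is given by the purely algebraic formula $(\sigma,f)(\sigma',f')=(\sigma\sigma',f'')$ with $f''(w)=f(w)f'(w\sigma)$, a formula that refers only to the abstract group structure of $G\times G^*$ together with the bicharacter $F$, and not at all to the topology. Consequently the matrix products, the evaluation of the cocycle terms, and the final cube reduction transfer verbatim, the cancellation in the last step again using $h(x)=h(-x)$.

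The only point demanding genuine attention — and the sole place where the LCA(2) hypotheses enter — is the verification of the second paragraph, namely that $\rho^{-1}$ is an LCA(2) isomorphism and that the cocycle maps $h(x)$ and $(x,-x^*)$ are LCA(2) quadratic characters of $G\times G^*$; everything else is a formal consequence of the group law. These facts, however, follow immediately from the definition of a non-degenerate quadratic character (\ref{4.2}) and from the construction of $G^*$ and of $B_0(G)$ recalled in Section~\ref{s4}, so I expect no real obstacle: the content of the theorem is entirely algebraic, and the LCA(2) topology plays only the passive role of ensuring that the constituent maps are morphisms. For this reason the authors' remark that ``their proofs are exactly the same'' is justified, and the proof of Theorem~\ref{thm2.2.1} applies essentially word for word.
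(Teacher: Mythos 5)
Your proposal is correct and coincides with the paper's own treatment: the paper simply declares that the proof of Theorem \ref{thm4.2} is ``exactly the same'' as that of Theorem \ref{thm2.2.1}, i.e.\ reduce via the presentation (\ref{sl2}) to the relations (\ref{rel}) and repeat the computations (\ref{2c1})--(\ref{2c2}), which depend only on the algebraic group law of $B_0(G)$. Your additional explicit check that $t_0(h)$ and $d_0'(\rho^{-1})$ genuinely lie in the LCA(2) version of $B_0(G)$ is exactly the point the paper leaves implicit, and it is handled correctly.
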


  For a fixed $ V_0 \in {\cal U} ( G )$,  we defined  ${\cal S}_{V_0} ( G ) $, the Bruhat-Schwartz space relative to $V_0$ in Section \ref{ss4.3}.
   We will also denote it by ${\cal S} ( G )$ if the choice of $V_0$ is given.
 We now define an action of  the Heisenberg group $A ( G )$  on ${\cal S} ( G )$.
  Let  $ f \in  {\cal S} ( G )$ with $(V_1 , V_2)$-component given by
 \begin{equation}\label{4.3}    f_{V_1 , V_2 } \otimes \mu_{V_1, V_2}   \in  {\cal S}_{V_0} (V_1 , V_2 ) = {\cal S} ( V_2/ V_1 ) \otimes \mu ( V_1 , V_0 ).  \end{equation}
  For $a \in G$,  suppose that $ a \in V$ for some $V\in {\cal U} ( G )$,  then
  \[ {\cal N}_a := \{  ( V_1  , V_2 ) \in {\cal U}^2  \, | \,  V_1 \subset V \subset V_2  \} \]
 is a net. We ignore the dependence of $\cal N_a$ on $V$ since we shall only use the property that $\cal N_a$ is a net. For $(V_1 , V_2 ) \in {\cal N }_a $,
    consider
 \begin{equation}\label{4.4}        f_{ V_1 , V_2 } ( x+ a ) \otimes \mu_{V_1 , V_2 } \in  {\cal S}_{V_0} ( V_1 , V_2 ). \end{equation}
  It is straightforward to see that the family (\ref{4.4})  for  $(V_1 , V_2 ) \in {\cal N}_a $
  is compatible in the sense of (\ref{3c}),  so it can be extended to a family for all $( V_1 , V_2 )\in {\cal U}^2 $ with $V_1 \subset V_2 $
   by (\ref{3ext}).
      We get an element $ \pi (a ) f \in {\cal S} ( G)$. Using the fact that ${\cal N}_{a_1} \cap {\cal N}_{a_2}$ is also a net,
       we can prove
    $ \pi (a_1 + a_2 ) = \pi ( a_1 ) \pi (a_2 ). $

 For $a^* \in G^*$,  suppose that $a^* \in V^{\perp} $ for some $V\in {\cal U}$.  Let
 \[   {\cal N}_{a^*} = \{  ( V_1  , V_2 ) \in {\cal U}^2  \, | \,  V_1 \subset  V  \subset V_2  \} ,\]
which is clearly a net of $G$.  For  $(V_1 , V_2 ) \in {\cal N }_{a^*} $, we consider
\begin{equation}\label{4.5}      ( x , a^* )    f_{ V_1 , V_2 } ( x ) \otimes \mu_{V_1 ,V_2 } \in  {\cal S}_{V_0} ( V_1 , V_2 ). \end{equation}
 The family  (\ref{4.5})   for $(V_1 , V_2 ) \in {\cal N}_{a^*} $
  is compatible in the sense of (\ref{3c}), so it can be extended to a family for all $( V_1 , V_2 )\in {\cal U}^2 $ with $V_1 \subset V_2$
   by (\ref{3ext}).
  We get an element $ \pi (a^* ) h \in {\cal S} ( G)$.
It is clear that $ \pi (a_1^* + a_2^* ) = \pi ( a_1^* ) \pi (a_2^* ) $.
 Finally for $( a , a^*, \lambda ) \in A( G)$, we define
 $ \pi ( a , a^*, \lambda  ) = \lambda \, \pi  ( a^* ) \pi ( a )$.

 \begin{lemma}\label{lemma4.3} The above gives a representation of $A ( G ) $  on ${\cal S} ( G ).$
\end{lemma}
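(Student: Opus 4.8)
The plan is to reduce the group law of $A(G)$ to the two facts already recorded before the lemma—that $a\mapsto\pi(a)$ and $a^*\mapsto\pi(a^*)$ are homomorphisms of $G$ and of $G^*$ into the operators on ${\cal S}(G)$—together with a single commutation relation between a translation $\pi(a)$ and a modulation $\pi(a^*)$. Writing $g_i=(a_i,a_i^*,\lambda_i)$ and recalling $F(w_1,w_2)=(a_1,a_2^*)$, the group law $(\ref{2.2.2})$ reads
\[
g_1g_2=\big(a_1+a_2,\ a_1^*+a_2^*,\ (a_1,a_2^*)\,\lambda_1\lambda_2\big).
\]
Since $\pi(a,a^*,\lambda)=\lambda\,\pi(a^*)\pi(a)$, after cancelling the scalars $\lambda_i$ the desired identity $\pi(g_1g_2)=\pi(g_1)\pi(g_2)$ becomes
\[
(a_1,a_2^*)\,\pi(a_1^*+a_2^*)\,\pi(a_1+a_2)=\pi(a_1^*)\,\pi(a_1)\,\pi(a_2^*)\,\pi(a_2).
\]
Thus the only genuinely new input is the Heisenberg relation $\pi(a)\pi(a^*)=(a,a^*)\,\pi(a^*)\pi(a)$ for $a\in G$, $a^*\in G^*$.

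To prove this relation I would work componentwise over a common net. Choose $V,V'\in{\cal U}$ with $a\in V$ and $a^*\in V'^{\perp}$, so that $\pi(a)$ acts by the direct formula $(\ref{4.4})$ on ${\cal N}_a$ and $\pi(a^*)$ acts by $(\ref{4.5})$ on ${\cal N}_{a^*}$. The intersection ${\cal N}_a\cap{\cal N}_{a^*}$ is again a net: for any finite family of subgroups one finds, by LCA(2)-2, a $V_1$ contained in all of them and in $V\cap V'$, and a $V_2$ containing all of them and $V+V'$. Hence an element of ${\cal S}(G)$ is determined by its components over ${\cal N}_a\cap{\cal N}_{a^*}$, and over this net both operators act by their direct formulas rather than through the extension $(\ref{3ext})$. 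For $(V_1,V_2)$ in this net we have $a\in V_2$ and $a^*|_{V_1}=1$, so $\bar a=a+V_1$ is a well-defined element of $V_2/V_1$, $a^*$ descends to a character of $V_2/V_1$, and $(a,a^*)=(\bar a,a^*)$ as computed in $V_2/V_1$. Since both operators leave the measure factor $\mu_{V_1,V_2}$ untouched, a direct computation on the function part gives
\[
[\pi(a)\pi(a^*)f]_{V_1,V_2}(x)=(x+a,a^*)\,f_{V_1,V_2}(x+a)=(a,a^*)\,(x,a^*)\,f_{V_1,V_2}(x+a)=(a,a^*)\,[\pi(a^*)\pi(a)f]_{V_1,V_2}(x),
\]
which is exactly the commutation relation.

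Finally I would assemble the pieces. Starting from $\pi(g_1)\pi(g_2)=\lambda_1\lambda_2\,\pi(a_1^*)\pi(a_1)\pi(a_2^*)\pi(a_2)$, I move $\pi(a_1)$ past $\pi(a_2^*)$ by the commutation relation to extract the scalar $(a_1,a_2^*)$, and then collapse $\pi(a_1^*)\pi(a_2^*)=\pi(a_1^*+a_2^*)$ and $\pi(a_1)\pi(a_2)=\pi(a_1+a_2)$ using the already-established homomorphism properties of $\pi|_G$ and $\pi|_{G^*}$. Comparison with the displayed expression for $\pi(g_1g_2)$ completes the verification, and $\pi(0,0,1)=\mathrm{id}$ is immediate.

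The main obstacle I anticipate is bookkeeping rather than conceptual: one must justify that $\pi(a)$ and $\pi(a^*)$, defined a priori by their action over distinct nets and then extended by $(\ref{3ext})$, may legitimately be composed and computed over a single common net, and that the global character $(a,a^*)$ coincides with the pairing on the finite-level LCA quotients $V_2/V_1$. Once this common-net reduction is in place, the whole statement descends to the standard Heisenberg computation on each LCA group $V_2/V_1$, where it is classical.
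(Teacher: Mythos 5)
Your proof is correct and takes essentially the approach the paper intends: the paper states Lemma \ref{lemma4.3} without a written proof, but its surrounding remarks (that ${\cal N}_{a_1}\cap {\cal N}_{a_2}$ is again a net, so componentwise formulas may be composed over a common net) prescribe exactly your reduction, and your remaining ingredient --- the commutation relation $\pi(a)\pi(a^*)=(a,a^*)\,\pi(a^*)\pi(a)$ verified on each LCA quotient $V_2/V_1$, combined with the group law $(\ref{2.2.2})$ --- is the standard Heisenberg computation that fills in the omitted details. Nothing further is needed.
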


If $ V \subset U$ in ${\cal U}(G)$, then $U^{\perp}\subset V^{\perp}$ in ${\cal U} (G^*)$.
 The LCA groups $U / V $ and $ V^{\perp} / U^{\perp}$ are dual LCA groups. We have the Heisenberg group
 \[  A( U / V )  =   U / V  \times V^{\perp} /  U^{\perp} \times \Bbb T \]
as in Section \ref{subsection2.2}.   The group $A( U / V) $  is the quotient of $ U \times V^{\perp} \times \Bbb T \subset A(G)$ by $ V\times U^{\perp}$:
 \[     \pi:     U \times V^{\perp} \times \Bbb T  \to    A( U / V ) , \; \;  ( x , x^* , t ) \mapsto  (  x+ V,     x^* + U^{\perp} , t ) .\]
The group  $ A( U / V )$ acts on ${\cal S} ( U / V)$ as in Section \ref{ss3.3}.    Fixing a nonzero measure $ \mu \in  \mu ( V , V_0 )$,  we have a map
  ${\cal S}_{V_0 } ( G ) \to {\cal S} ( U / V) $ which sends  $ f \in {\cal S}_{V_0 } ( G )$
  with $(V_1 , V_2)$-components as in (\ref{4.3})
  to $ f_{V, U} \otimes  \mu_{V, U} /  \mu  $. It is clear that we have the commutative diagram
\begin{equation}\label{4.6}
\xymatrix{
  ( U\times V^{\perp} \times \Bbb T) \times    {\cal S}_{V_0} ( G ) \ar[r] \ar[d]  &   {\cal S}_{V_0} ( G ) \ar[d] \\
    A(U/ V) \times {\cal S} ( U / V  )   \ar[r]   &  {\cal S} ( U / V  )
}
\end{equation}

Similarly $ A ( G )$ acts on $ {\cal S}_{V_0^{\perp}} ( G^* ) = {\cal S} ( G^* )$ as follows.
 Let $ f'\in  {\cal S} ( G^* )$ with $(W_1 , W_2 )$-component
 \[  f'_{ W_1 , W_2 }\otimes \mu_{W_1 , W_2 } \in  {\cal S}_{V_0^{\perp} }( W_1 , W_2 ) =     {\cal S} ( W_2/ W_1 ) \otimes \mu ( W_1 , V_0^{\perp} ).\]
  For $ a \in G$,
  \[  {\cal N}'_a := \{ ( W_1 , W_2 ) \in {\cal U} ( G^* )^2  \; | \;   W_1 \subset W_2 ,  ( W_1 , a ) =1 \} \]
  is a net for $G^*$.
 Then  $\pi  (a ) f' $  is the unique element in ${\cal S} ( G^* ) $ such that for $(W_1 , W_2 ) \in  {\cal N}'_a$,
   the $(W_1 , W_2 )$-component of  $ \pi  (a ) f' $ is given by
  \[    f'_{ W_1 , W_2 } ( x^* ) ( a , - x^* )  \otimes \mu_{W_1 , W_2 } . \]
 For $ a^* \in G^* $,
  \[  {\cal N}'_{a^*}: = \{ ( W_1 , W_2 ) \in {\cal U} ( G^* )^2  \; | \;   W_1 \subset W_2 ,  a^* \in W_2  \} \]
  is a net for $G^*$. Then $\pi ( a^* ) f'$ is the unique element in ${\cal S} ( G^* ) $ such that for $(W_1 , W_2 ) \in  {\cal N}'_{a^*}$,
   the $(W_1 , W_2 )$-component of  $ \pi  (a^* ) f' $ is given by
  \[    f'_{ W_1 , W_2 } ( x^* + a^*  )   \otimes \mu_{W_1 , W_2 } . \]
    The central element  $ \lambda \in \Bbb T$ acts as scalar multiplication by $\lambda $.

 \begin{lemma}\label{lemma4.4}  The Fourier transform $ F:       {\cal S} ( G ) \to {\cal S} ( G^* )$ is an intertwining operator between representations of $A ( G )$.
\end{lemma}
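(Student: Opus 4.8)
The plan is to reduce the assertion to the classical LCA intertwining property recalled in Section \ref{ss3.3}, checked one net-component at a time. Since the Heisenberg group $A(G)$ is generated by the two subgroups $\{(a,0,1):a\in G\}$ and $\{(0,a^*,1):a^*\in G^*\}$ together with the center $\Bbb T$, and since the center acts by the same scalar on $\cal S(G)$ and $\cal S(G^*)$, it suffices to prove $F\pi(a)=\pi(a)F$ for $a\in G$ and $F\pi(a^*)=\pi(a^*)F$ for $a^*\in G^*$; the intertwining relation is multiplicative, so it then holds on all of $A(G)$. Moreover, because an element of $\cal S(G)$ (resp. $\cal S(G^*)$) is determined by its components over any net, and because the Fourier transform (\ref{3.3.10}) together with the actions (\ref{4.4}) and (\ref{4.5}) are all defined component-wise, it is enough to match components over a single cofinal net.

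First I would fix $a\in G$, choose $V\in\cal U(G)$ with $a\in V$, and use the net $\cal N_a=\{(V_1,V_2):V_1\subset V\subset V_2\}$. The Fourier transform carries the $(V_1,V_2)$-component to the $(V_2^\perp,V_1^\perp)$-component; since $a\in V\subset V_2$ one has $(V_2^\perp,a)=1$, so $(V_2^\perp,V_1^\perp)\in\cal N'_a$, and thus $F$ sends the net $\cal N_a$ into the net $\cal N'_a$ governing $\pi(a)$ on $\cal S(G^*)$ via $(V_1,V_2)\mapsto(V_2^\perp,V_1^\perp)$. On each component, $\pi(a)$ acts by the translation $f_{V_1,V_2}(x)\mapsto f_{V_1,V_2}(x+a)$ on $\cal S(V_2/V_1)$ (leaving the measure tensor factor untouched), which under the LCA Fourier transform $\cal S(V_2/V_1)\to\cal S(V_1^\perp/V_2^\perp)$ becomes multiplication by $(a,-x^*)$; this is exactly the formula for $\pi(a)$ on the $(V_2^\perp,V_1^\perp)$-component of $\cal S(G^*)$. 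The case $a^*\in G^*$ is symmetric: choosing $V$ with $a^*\in V^\perp$ and the net $\cal N_{a^*}$, the component action of $\pi(a^*)$ is modulation by $(x,a^*)$, whose Fourier transform is the translation $x^*\mapsto x^*+a^*$, matching $\pi(a^*)$ on $\cal S(G^*)$. These two identities are precisely the statement recalled in Section \ref{ss3.3} that the LCA Fourier transform intertwines the Heisenberg representations of $A(V_2/V_1)$ and $A(V_1^\perp/V_2^\perp)$.

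To assemble the component identities into the global one, I would invoke the commutative diagram (\ref{4.6}) and its evident analogue for $G^*$, which identify the restricted $A(G)$-action on a single component with the genuine Heisenberg action of $A(V_2/V_1)$ (resp. of $A(V_1^\perp/V_2^\perp)$). Combined with the compatibility of $i^*$ and $p_*$ under the transition maps of Lemma \ref{lemma3.3.2}, this shows that the families $\{F\pi(a)f\}$ and $\{\pi(a)Ff\}$ agree over the net $\cal N_a$, hence, by the uniqueness of the extension (\ref{3ext}), as elements of $\cal S(G^*)$; likewise for $a^*$.

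The only real work is bookkeeping rather than analysis, and I expect that to be the main obstacle to writing a fully rigorous argument. One must check that the measure tensor factor $\mu_2\in\mu(U,V_0)$, identified with an element of $\mu(U^\perp,V_0^\perp)$ through the pairing used in (\ref{3.3.10}), is carried through unchanged by the translation and modulation operators, since these act only on the function factor; and one must verify that a single net is cofinal simultaneously for $\pi(a)$, $\pi(a^*)$ and $F$, which is what the computations $(V_2^\perp,a)=1$ and $a^*\in V_1^\perp$ above ensure. Granting these routine verifications, the essential analytic input is the already-established LCA intertwining property, so no further estimates are needed.
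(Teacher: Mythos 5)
Your proof is correct; the paper in fact states Lemma \ref{lemma4.4} without proof, and your component-wise verification over the nets ${\cal N}_a$, ${\cal N}_{a^*}$ --- reducing, via the diagram (\ref{4.6}), the map $(V_1,V_2)\mapsto(V_2^\perp,V_1^\perp)$, and the extension uniqueness (\ref{3ext}), to the classical LCA intertwining property recalled in Section \ref{ss3.3} --- is exactly the routine argument the authors leave implicit. The only point worth spelling out is that the image of ${\cal N}_a$ under $S\mapsto S^\perp$ is itself a net of $G^*$ (immediate since $\perp$ is an order-reversing bijection ${\cal U}(G)\to{\cal U}(G^*)$ and ${\cal N}_a$ is a net of $G$), so that agreement of the two families on this subfamily of ${\cal N}'_a$ really determines the element of ${\cal S}(G^*)$; your observation that $(V_2^\perp,a)=1$ essentially contains this check.
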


 Recall that  we defined an action of $\widetilde{\rm Aut} ( G ) $ on ${\cal S} ( G)$  in Section \ref{ss4.3} (see (\ref{3act})) .
   It is easy to check that
  for every $  \alpha' = ( \alpha , \lambda ) \in \widetilde {\rm Aut } ( G )$,   $ g \in A ( G ) $,
  \begin{equation}\label{4.7}
    \pi ( \alpha' )^{-1}   \pi  ( g )     \pi ( \alpha' ) =  \pi  ( g d_0 ( \alpha ) )   .\end{equation}
Let $ \alpha : G \to G' $ be an isomorphism in LCA(2). In order to have an induced isomorphism  ${\cal S}_{V_0'} ( G' ) \to
  {\cal S}_{V_0} ( G ) $, we take $\lambda \in \mu (  V_0' \alpha^{-1} , V_0 ) $, $\lambda \ne 0 $.   The pair $( \alpha , \lambda ) $
   gives an isomorphism
 \[  T ( \alpha , \lambda ) : {\cal S}_{V_0'} ( G' ) \to    {\cal S}_{V_0} ( G ) \]
 as follows.
      Let $ f \in {\cal S}_{V_0'} ( G' ) $, whose $(W_1 , W_2 )$-component
   is  $  f_{W_1 , W_2 } ( x ) \otimes \mu_{W_1, W_2 } \in {\cal S} ( W_2 / W_1 ) \otimes \mu ( W_1 , V_0' ) $.
Define $ T (  \alpha , \lambda ) f \in  {\cal S}_{V_0} ( G ) $ as the element whose $(V_1 , V_2 )$-component is
    \begin{equation}\label{4.8}      f_{V_1\alpha , V_2\alpha  } ( x\alpha  ) \otimes  ( \mu_{V_1\alpha , V_2 \alpha  } \alpha^{-1} ) \lambda,
    \end{equation}
where $ \mu_{V_1\alpha , V_2 \alpha  } \alpha^{-1} $ is the image of $\mu_{V_1\alpha , V_2 \alpha  }$  under
the LCA isomorphism $ \alpha^{-1} :    V_2 \alpha  / V_1 \alpha   \to V_2 / V_1 $.

\subsection{Weil index for a non-degenerate quadratic character on LCA(2) group}

 Let $h$ be a quadratic character of $G$ and $\rho :  G \to G^*$ be the associated morphism as in (\ref{4.2}).
 We define an automorphism $ \pi ( t_0 ( h )  ) $ of ${\cal S} ( G ) $
  as follows.  Let
  \[ {\cal N}_h = \{  ( V , U ) \, | \,    U , V \in {\cal U},  V \subset U,  h ( u + v ) = h ( u ) \; \; {\rm for \; all } \; u \in U , v \in V \} .\]
 By Lemma \ref{lemma4.1}, $ {\cal N}_h$ is a net.
Let $f \in {\cal S} ( G )$ with $ (V_1 , V_2 )$-component given by (\ref{4.3}).
 For $ (V , U) \in {\cal N}_h $, the quadratic character $ h ( x ) $ gives a quadratic character on $ U / V$, which we still denote by $h( x ) $.
 Define $\pi ( t_0 ( h )  ) f $ to be the element with $(V, U)$-component for $(V, U)\in {\cal N}_h$ given by
    \begin{equation}\label{4.9}
                h ( x )   f_{ V , U } ( x) \otimes \mu_{V , U } .
                \end{equation}
  It is clear that the family (\ref{4.9}) for $(V, U)\in {\cal N}_h$ is compatible, so our definition of  $\pi ( t_0 ( h )  ) f $ is well-defined.
It is clear that this defines  an action of the group of quadratic characters on  $ {\cal S} ( G ) $.

 For an automorphism $\alpha \in {\rm Aut} ( G ) $, choose $\lambda \in \mu ( V_0 \alpha^{-1} , V_0 ) $, $\lambda \ne 0 $.
 We define $ \pi ( d_0 ( \alpha ) , \lambda ) $ as the action of $ (  \alpha , \lambda ) \in \widetilde{\rm Aut} ( G ) $
  on $ {\cal S} ( G ) $ (see Section \ref{ss4.3}, (\ref{3act})).

 Let $ \gamma : G^* \to G$ be an isomorphism, and $ \lambda  \in \mu ( V_0^{\perp} \gamma^{*} , V_0 )$.    We define an isomorphism
 \[  \pi  ( d_0' ( \gamma )  , \lambda ) :
  {\cal S} ( G ) \to  {\cal S} ( G ) \]
   as the composition
   \begin{equation}\label{4.10}   {\cal S} ( G )  \stackrel { F }  { \longrightarrow }  {\cal S} ( G^* )   \stackrel { T( -{\gamma^*}^{-1} , \lambda )  }  { \longrightarrow }   {\cal S} ( G ).
   \end{equation}
  This definition is analogous to that of $\pi ( d_0' ( \gamma ) ) $ in Section \ref{subsection2.2}.

  For $f \in {\cal S}_{V_0 } ( G )$ with $(V_1 , V_2 )$-component
\[     f_{V_1 , V_2 } ( x ) \otimes \mu_{V_1 , V_2 } ,\]
 write
  $ \mu_{V_1 , V_2 } \in \mu ( V_1 , V_0 ) $ as
\[     \mu_{V_1 , V_2 } =  \mu_{V_1 , V_2 }'  \otimes \mu_{V_1 , V_2 }'' , \; \; \;  \mu_{V_1 , V_2 }' \in \mu ( V_1 , V_2 ), \; \;  \mu_{V_1 , V_2 }''\in \mu ( V_2 , V_0)=  \mu ( V_2^{\perp} , V_0^{\perp}) .\]
  Then the $(V_2^{\perp} \gamma^*  , V_1^{\perp} \gamma^*)$-component of $  \pi ( d_0' ( \gamma )  , \lambda ) \phi $ is given by
\begin{equation}\label{4.11}
   \int_{V_2/V_1} f_{V_1 , V_2 }  ( t ) \la t , -x^* {\gamma^*}^{-1} \ra  d\mu_{V_1 , V_2}' (t)  \otimes (\mu_{V_1 , V_2}'' \gamma^*)\lambda  .
\end{equation}

We will write $ \pi ( d_0 ( \alpha ) , \lambda )$ and $ \pi  ( d_0' ( \gamma )  , \lambda )$ as $ \pi ( d_0 ( \alpha ) )$ and $ \pi  ( d_0' ( \gamma )   )$ when the choice of $\lambda $ is given.
Let $ g \in B_0 ( G ) $ be one of $ d_0 ( \alpha ) ,  t_0 ( h ) ,  d_0' ( \gamma ) $, and $a \in A( G ) $. Then
\[          \pi ( g )^{-1}  \pi ( a ) \pi ( g ) = \pi ( a \cdot g ).\]

Let $h$ be a non-degenerate quadratic character of $G$ and
 $\rho: G\to G^\ast$ be the associated isomorphism.  Fix a choice of $\lambda \in \mu ( V_0^{\perp} \rho^{-1} ,  V_0 )$. $\lambda \ne 0 $.
 So we have operators $\pi ( t_0 ( h )  )$ and $  \pi (  d_0' ( \rho^{-1} ) ) $.
 We will prove that they satisfy the relation similar to (\ref{2.3rel}) and give rise to a projective action of $ SL_{2} ( {\Bbb Z}) $
 on ${\cal S} (G ) $.

 For a pair $(U^\perp\rho^{-1}, U)$ as in Lemma \ref{lemma4.1}, we write $ U' $ for $U^\perp\rho^{-1}$.  Property   (2) in Lemmma \ref{lemma4.1} implies that $ h $ descends to a non-degenerate quadratic character
  $\bar h $ on $ \bar U = U / U' $, and $\rho$ induces an isomorphism $ \bar \rho  : \bar  U  \to {\bar U}^* =  U'^{\perp} / U^{\perp } $. As in Section \ref{subsection2.2}, we have operators
  \[  \pi ( t ( \bar h ) ) ,  \pi ( d_0' ( {\bar \rho}^{-1} )): \; \; {\cal S} ( \bar U ) \to {\cal S} ( \bar U ) \]
 that satisfy the relations  (\ref{4power}) and
 (\ref{2.3rel}). Therefore they generate a projective representation of $SL_2 ( {\Bbb Z}  ) $ on  ${\cal S} ( \bar U )$.
  Fix a $\mu \in  \mu ( U' , V_0 ) , \mu \ne 0 $.      It is easy to check that the following diagrams commute
  \begin{equation}\label{4.12}
\xymatrix{
     {\cal S}_{V_0} ( G ) \ar[r]^{ \pi ( t_0 ( h )) }    \ar[d]  &   {\cal S}_{V_0} ( G ) \ar[d] \\
    {\cal S} ( \bar U  )   \ar[r]^{ \pi ( t_0 (\bar  h )) }   &  {\cal S} ( \bar U   )
}    \; \; \; \; \; \;
\xymatrix{
     {\cal S}_{V_0} ( G ) \ar[r]^{ \pi ( d_0' ( \rho^{-1} )  ) }  \ar[d]  &   {\cal S}_{V_0} ( G ) \ar[d] \\
    {\cal S} ( \bar U  )   \ar[r]^{ \pi ( d_0' ( {\bar \rho}^{-1} )   ) }  &  {\cal S} ( \bar U   )
}
\end{equation}
   where all the four vertical arrows $ {\cal S}_{V_0} ( G ) \to   {\cal S} ( \bar U  ) $ send
  $ f $ with $ ( U',  U) $-component $ f_{ U',  U } \otimes \mu_{ U' , U }\in {\cal S} ( U / U' ) \otimes \mu ( U ' , V_0 )$
  to $  f_{ U' , U }  \mu_{ U' , U } / \mu $.
  From these diagrams together with (\ref{4power}) and (\ref{2.3rel}), it follows that  there exist nonzero scalars $c(U)$ and $d(U)$ such that
 for every $ f \in {\cal S}_{V_0} ( G ) $
 and $ ( U' , U) $ as above,
 \[ (U' , U){\rm -component \; of}  \;   \left( \pi ( t_0 ( h ) ) \pi ( d_0' ( \rho^{-1} ) ) \right)^3 f  = c(U) \cdot   (U' , U){\rm-component \; of} \; \pi (  d_0' ( \rho^{-1} ))^{2}f, \]
 \[  (U' , U){\rm-component \; of}  \;   \pi ( d_0' ( \rho^{-1} ) )^4 f   = d(U) \cdot   (U' , U){\rm-component \; of} \; f.\]
 Since the $(U' , U)$-components of $\left( \pi ( t_0 ( h ) ) \pi ( d_0' ( \rho^{-1} ) ) \right)^3 f $ are compatible,
  all the $c(U)$'s are equal. Similary all the $d(U)$'s are equal.  This proves the following

\begin{theorem}\label{thm4.5}  Let $h$ be a non-degenerate quadratic character of $G$ and
 $\rho: G\to G^\ast$ is the associated isomorphism.  Then $ \pi ( t_0 ( h ) )$ and
$\pi ( d_0' ( \rho^{-1} ) )$ generate a projective representation of   $SL_2 ( {\Bbb Z} ) $ on ${\cal S} ( G ) $.
\end{theorem}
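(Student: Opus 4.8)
The plan is to show that, with $t \mapsto \pi(t_0(h))$ and $s \mapsto \pi(d_0'(\rho^{-1}))$, the two operators satisfy the defining relations $s^4 = 1$ and $(ts)^3 = s^2$ of the presentation (\ref{sl2}) of $SL_2(\mathbb{Z})$ up to nonzero scalars, which is exactly what a projective representation requires. Concretely it suffices to exhibit $c, d \in \mathbb{C}^\times$ with $\pi(d_0'(\rho^{-1}))^4 = d$ and $(\pi(t_0(h))\pi(d_0'(\rho^{-1})))^3 = c\,\pi(d_0'(\rho^{-1}))^2$ as operators on $\mathcal{S}(G)$; these scalars then record the central multipliers of the resulting action.

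First I would reduce everything to the locally compact setting of Section \ref{ss3.3}. By Lemma \ref{lemma4.1} the pairs $(U', U) := (U^\perp\rho^{-1}, U)$ with $U \in \mathcal{U}$ satisfying conditions (1) and (2) form a net $\mathcal{N}_h$, and for each such pair $h$ descends to a non-degenerate quadratic character $\bar{h}$ on the LCA group $\bar{U} = U/U'$, with $\rho$ inducing an isomorphism $\bar{\rho}: \bar{U} \to \bar{U}^*$. On the ordinary Bruhat-Schwartz space $\mathcal{S}(\bar{U})$ the operators $\pi(t_0(\bar{h}))$ and $\pi(d_0'(\bar{\rho}^{-1}))$ already obey (\ref{4power}) and (\ref{2.3rel}), so they generate a projective $SL_2(\mathbb{Z})$ action with central scalars $c(U), d(U) \in \mathbb{C}^\times$.

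Next I would transport these relations back to $G$ using the commuting diagrams (\ref{4.12}), whose vertical arrows are the projections $\mathcal{S}_{V_0}(G) \to \mathcal{S}(\bar{U})$ that read off the $(U', U)$-component (normalized by the fixed measure $\mu$). Since these diagrams intertwine $\pi(t_0(h))$ with $\pi(t_0(\bar{h}))$ and $\pi(d_0'(\rho^{-1}))$ with $\pi(d_0'(\bar{\rho}^{-1}))$, the scalar identities on each $\mathcal{S}(\bar{U})$ pull back to componentwise identities: for every $f \in \mathcal{S}_{V_0}(G)$ the $(U', U)$-component of $(\pi(t_0(h))\pi(d_0'(\rho^{-1})))^3 f$ equals $c(U)$ times that of $\pi(d_0'(\rho^{-1}))^2 f$, and the $(U', U)$-component of $\pi(d_0'(\rho^{-1}))^4 f$ equals $d(U)$ times that of $f$.

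The crux --- and the step I expect to be the main obstacle --- is proving that $c(U)$ and $d(U)$ do not depend on the chosen pair in $\mathcal{N}_h$. Here I would use that applying these operators sends an element of $\mathcal{S}(G)$ to another genuine element, so that both $(\pi(t_0(h))\pi(d_0'(\rho^{-1})))^3 f$ and $\pi(d_0'(\rho^{-1}))^2 f$ are compatible families in the sense of (\ref{3c}). Given two pairs in $\mathcal{N}_h$, I would choose by the net property a common enclosing pair $(W', W)$ and push its component down to each of them through the restriction and fiber-integration maps $i^*, p_*$; since these maps are the same for both families, compatibility forces $\bigl(c(U_i) - c(W)\bigr)$ to annihilate the relevant $(U_i', U_i)$-component. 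Taking $f$ with that component nonzero (possible since the projections $\mathcal{S}(G) \to \mathcal{S}(V,U)$ are surjective by the extension (\ref{3ext})) yields $c(U_1) = c(W) = c(U_2)$, and likewise for $d$. Once $c(U) \equiv c$ and $d(U) \equiv d$ are constant, the componentwise identities, holding over a net that determines elements of $\mathcal{S}(G)$, upgrade to the desired operator identities, establishing the projective $SL_2(\mathbb{Z})$ representation.
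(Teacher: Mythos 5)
Your proposal is correct and follows essentially the same route as the paper: reduce to the LCA quotients $\bar U = U/U^\perp\rho^{-1}$ over the net of Lemma \ref{lemma4.1}, invoke (\ref{4power}) and (\ref{2.3rel}) there, transport componentwise via the commuting diagrams (\ref{4.12}) to get scalars $c(U)$, $d(U)$, and use compatibility of the families to force these scalars to be constant. Your explicit argument for constancy---choosing a common enclosing pair and pushing down through $i^*$, $p_*$ against an $f$ with nonvanishing component---is just a fleshed-out version of the paper's one-line remark that compatibility of the $(U',U)$-components makes all the $c(U)$'s (and $d(U)$'s) equal.
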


\begin{defn}\label{def} {\rm Let $h$ be a non-degenerate quadratic character on an LCA(2) group $G$ and let $\rho : G\to G^*$ be
 the associated isomorphism. Let $U\in {\cal U } (  G ) $ satisfying the properties in Lemma \ref{lemma4.1}, and $\bar h$ be the non-degenerate quadratic character on $\bar U :=  U /  U^{\perp} \rho^{-1}$
  induced from $h$. Define the {\bf Weil index} $\gamma ( h)$  of $h$ as $ \gamma ( \bar h )$. }
 \end{defn}

The existence of $U$ in Definition \ref{def} follows from Lemma \ref{lemma4.1}.  To see that $\gamma ( h )$
is independent of the choice of $U$, we use the fact that
 \[      ( \pi ( t_0 ( h ) ) \pi ( d_0' ( \rho^{-1} ) )^3 = C  \pi ( d_0' ( \rho^{-1} ) )^2  \]
 for some nonzero scalar $C$, and $ \gamma ( h ) = C / | C| $.

 \section{ Product formula of Weil index } \label{s6}

In this section we generalize the product formula for the Weil index and give several examples.
We first prove a generalization of Corollary \ref{cor2.6}.

\begin{theorem}\label{thm5.1}
Let $G$ be an LCA(2) group with a non-degenerate quadratic character $ h $ satisfying $h(x)=h(-x)$, and $\rho : G\to G^*$
be the associated isomorphism. If there is a closed subgroup $H$ of $G$ (i.e., $H$ is the image of an admissible monic into $G$)
 such that $ H^{\perp}  \rho^{-1} = H $ and $h(H^\perp\rho^{-1})=h(H)=1$, then $\gamma ( h ) =1 $.
\end{theorem}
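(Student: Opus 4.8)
The plan is to reduce the statement to the locally compact case, where it is precisely Corollary \ref{cor2.6}. By Lemma \ref{lemma4.1} the pairs $(U^\perp\rho^{-1},U)$, $U\in{\mathcal U}$, satisfying $U^\perp\rho^{-1}\subset U$ and $h(u+v)=h(u)$ form a net, and by Definition \ref{def} (together with the commutation relations behind Theorem \ref{thm4.5}) the Weil index $\gamma(h)$ equals $\gamma(\bar h)$ for the induced non-degenerate quadratic character $\bar h$ on $\bar U=U/U'$, where $U':=U^\perp\rho^{-1}$, for any such $U$. Thus it suffices to exhibit one pair $(U',U)$ in this net for which the image $\bar H$ of $H\cap U$ in the locally compact group $\bar U$ is a closed Lagrangian with $\bar h(\bar H)=1$; Corollary \ref{cor2.6} applied to $(\bar U,\bar h,\bar H)$ then gives $\gamma(\bar h)=1$, hence $\gamma(h)=1$.

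First I record the perp-calculus I will use. Since $\rho$ is symmetric, $(S\rho)^\perp=S^\perp\rho^{-1}$ for $S\subset G$ and $(T\rho^{-1})^\perp=T^\perp\rho$ for $T\subset G^*$; in particular $U'^\perp=(U^\perp\rho^{-1})^\perp=U\rho$. Two facts then come for free. Because $\rho$ is injective and $H\rho=H^\perp$ (the Lagrangian hypothesis $H^\perp\rho^{-1}=H$), one has $(H\cap U)\rho=H^\perp\cap U\rho$. And for $\bar x\in\bar H$ write a lift $x=k+v$ with $k\in H\cap U$, $v\in U'$; the net property gives $h(x)=h(k)=1$ since $h(H)=1$, so $\bar h(\bar H)=1$ and, by the same token, $\bar H$ is isotropic, i.e. $\bar H\subset\bar H^\perp\bar\rho^{-1}$. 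Using the identities above one computes that the symplectic complement $\bar H^\perp\bar\rho^{-1}$ is exactly the image in $\bar U$ of $U'^\perp\rho^{-1}\cap (H\cap U)^\perp\rho^{-1}=U\cap(K\rho)^\perp$, where $K=H\cap U$, and $(K\rho)^\perp=\overline{H+U'}$. Consequently $\bar H$ is Lagrangian as soon as $U$ is chosen so that $\bar H$ is closed in $\bar U$ and $U\cap\overline{H+U'}=(H\cap U)+U'$.

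The construction of such a $U$ is the heart of the matter. The Lagrangian hypothesis realizes $G$ as a polarized object: the form identifies $G/H$ with $H^*$, with kernel $(H\rho)^\perp=H$, so $G$ is an extension of $H^*$ by $H$. I will exploit the admissible monic $H\hookrightarrow G$: by Lemmas \ref{lemma3.6} and \ref{lemma3.8} there is a basis ${\mathcal B}$ of $G$ with $\{S\cap H\mid S\in{\mathcal B}\}={\mathcal U}(H)$ and all induced maps $S_2\cap H/S_1\cap H\to S_2/S_1$ closed embeddings. Refining within ${\mathcal B}$, and using that the pairs of Lemma \ref{lemma4.1} are cofinal, I can choose $U\in{\mathcal B}$ assembled from a lattice $L=H\cap U\in{\mathcal U}(H)$ and a matching lattice in $G/H\cong H^*$, so as to arrange simultaneously that $(U',U)$ lies in the net of Lemma \ref{lemma4.1}, that $\bar H=(L+U')/U'$ is closed in $\bar U$ by condition M3 for the adapted basis, and that $H+U'$ is closed. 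Then $U\cap\overline{H+U'}=U\cap(H+U')=(H\cap U)+U'$, the last equality being immediate from $U'\subset U$, and combined with the isotropy already established this yields $\bar H=\bar H^\perp\bar\rho^{-1}$.

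The main obstacle is precisely this last step: isotropy of $\bar H$ and the identity $(H\cap U)\rho=H^\perp\cap U\rho$ are automatic, but \emph{maximality} of $\bar H$ is not, and must be forced by choosing $U$ adapted to the polarization $G/H\cong H^*$ rather than taking an arbitrary element of the net (indeed, unlike the locally compact case there is in general no self-dual $U\in{\mathcal U}$ with $U^\perp\rho^{-1}=U$, which is exactly why the auxiliary $H$ is needed). Once $U$ is so chosen, the verification that $\bar H$ is a closed Lagrangian reduces to the closedness statement M3 of the admissible monic $H\hookrightarrow G$ together with the set-theoretic identity $(H+U')\cap U=(H\cap U)+U'$, after which Corollary \ref{cor2.6} finishes the proof.
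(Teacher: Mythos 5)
Your skeleton is the paper's: take a basis ${\cal B}$ of $G$ adapted to the admissible monic $H\hookrightarrow G$ (Lemmas \ref{lemma3.6}, \ref{lemma3.8}), pick $U\in{\cal B}$ lying in the net of Lemma \ref{lemma4.1}, pass to $\bar U=U/U'$ with $U'=U^\perp\rho^{-1}$, show the image $\bar H$ of $H\cap U$ is a closed subgroup of $\bar U$ with $\bar h(\bar H)=1$ and $\bar H^\perp\bar\rho^{-1}=\bar H$, and conclude by Corollary \ref{cor2.6} via Definition \ref{def}. Your verification of isotropy, of $\bar h(\bar H)=1$ via the net property, and the modular-law identity $(H+U')\cap U=(H\cap U)+U'$ are correct and agree with what the paper leaves implicit. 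The gap is at the step you yourself call ``the heart of the matter'': the existence of a $U$ satisfying your three simultaneous demands --- $(U',U)$ in the net, $\bar H$ closed, and $H+U'$ ``closed'' with $U\cap\overline{H+U'}=U\cap(H+U')$ --- is asserted (``I can choose $U\in{\cal B}$ assembled from a lattice \dots and a matching lattice in $G/H\cong H^*$'') but never constructed; no mechanism for producing the ``matching lattice'' is given, and this is exactly the content requiring proof. Worse, the auxiliary notions are not available in the framework: an LCA(2) group carries no global topology, so $\overline{H+U'}$ is undefined, and ``closed subgroup'' means image of an admissible monic --- you would have to exhibit $H+U'$ as one, which you do not.

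The perp calculus you use to compute $\bar H^\perp\bar\rho^{-1}$, in particular $(K\rho)^\perp=\overline{H+U'}$ for $K=H\cap U$ and even the opening identity $U'^\perp=U\rho$, silently imports the Pontryagin-duality facts $(A\cap B)^\perp=\overline{A^\perp+B^\perp}$ and $A^{\perp\perp}=\overline{A}$, which the paper has for closed subgroups of an LCA group but never establishes for arbitrary subgroups of an LCA(2) group. The paper's own proof avoids all of this: it takes $U\in{\cal B}$ in the net (obtained by running the argument of Lemma \ref{lemma4.1} within the $H$-adapted basis, with no polarization-matching refinement), observes that $\bar H=U\cap H/(U'\cap H)$ is closed in $\bar U$ by condition M3 for ${\cal B}$, and then asserts $\bar H^\perp\bar\rho^{-1}=\bar H$ from the global hypotheses $H^\perp\rho^{-1}=H$, $h(H)=1$, working entirely inside the LCA group $\bar U$ where the duality identities are legitimate. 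Your further claim that maximality of $\bar H$ genuinely fails for a general member of the net and ``must be forced'' by a choice adapted to the polarization $G/H\cong H^*$ is offered without evidence, and it is not how the paper proceeds. To repair the write-up: discard the closure calculus on $G$ itself, take $U$ in the adapted basis as the paper does, and prove the one inclusion $\bar H^\perp\bar\rho^{-1}\subset\bar H$ by a computation inside $\bar U$ --- this is the step the paper compresses to ``it is easy to see,'' and it is where your proposal currently has no valid argument.
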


\begin{proof}   Let ${\cal B} $ be a basis of $G$ such that $ \{ S \cap H \ | \ S\in {\cal B} \}$
 is a basis of $H$ and for any $ S_1 \subset S_2$ in ${\cal B} $ the map
$  S_2 \cap H /   S_1\cap H \to S_2 / S_1 $ is a closed embedding.
 The  method  in the proof of Lemma \ref{lemma4.1} can be used to prove that
 there exists  $ U \in {\cal B}$ satisfying the properties in the lemma.
 We have the non-degenerate quadratic character  $ \bar h $ on $\bar U := U /  U^{\perp} \rho^{-1}$
  induced from $h$. The group
  $ \bar H := U \cap H /  ( U^{\perp} \rho^{-1} \cap H ) $ is a closed subgroup of $\bar U $.
  It is easy to see that  $  {\bar H}^{\perp}{ \bar  \rho}^{-1}  = \bar H  $, and $\bar{h}( {\bar H}^{\perp}{ \bar  \rho}^{-1} )=\bar{h}(\bar H)=1$.
  Therefore $ \gamma ( h ) = \gamma ( \bar h ) = 1 $.
\end{proof}

\subsection{Product formula for a curve over  local field}\label{ss6.1}
Let $k$ be a local field, and $C$ be an irreducible regular projective curve over $k$ with function field $k(C)$.
The adele ring ${\Bbb A}_C$ of $C$ is by definition the restricted direct product over all the closed points of $C$:
\[   {\Bbb A}_C :=  {\prod}'_v F_v  \]
where $ F_v = \textrm{Frac } \widehat {\cal O}_v  $ is a two-dimensional local field.
An element $(a_v ) \in    \prod_v F_v   $ is in  $ \prod_v' F_v $ if and only if
$a_v \in  \widehat{\cal O}_v $ for almost all $v$.  For a divisor $ D =\sum  m_v [ v ] $ on $C$,
 set
\[  {\Bbb A}_C ( D ) = \{ ( a_v ) \in {\Bbb A} \ | \  \textrm{ord}_v ( a_v ) + m_v \geq 0    \ {\rm for \ all \ } v\in C  \} . \]
Then we have a filtration $ {\Bbb A}_C ( D ) $ indexed by the divisor group $\textrm{Div} ( C)  $.
If $D_1 < D_2 $, i.e.,  $ D_2 - D_1 = \sum_{i=1}^s   m_i [ v_i ] $ with $ m_i \geq 0 $,
then $ \Bbb A_C( D_1 ) \subset \Bbb A_C ( D_2 ) $ and
$ \Bbb  A_C( D_2 ) / \Bbb A_C( D_1 ) $ is isomorphic to a direct product
\[  \prod_{i=1}^s k_{v_i}^{m_i} \]
where $ k_{v_i} $ is the residue field at $v_i$.   Each $ k_{v_i} $ a finite
extension of the local field $k$, hence also a local field.
Therefore $   \Bbb A_C( D_2 ) / \Bbb A_C( D_1 ) $ is an LCA.
The family $\{ {\Bbb A}_C(D) \ | \ D\in \textrm{Div}(C)\}$ satisfies all the conditions in Lemma \ref{lemma3.1},  hence ${\Bbb A}_C$ has an LCA(2) structure.

 Let  us fix a non-trivial additive character $\psi $ of $k$, which gives an additive character $\psi_v$ on each
 $k_v$ by the composition $ k_{v}  \stackrel{\textrm{Tr}}{\longrightarrow} k \stackrel{\psi}{\longrightarrow} \Bbb T $.
 We also fix a nonzero meromorphic differential $\omega $ on $C$,
which gives a pairing $ F_v \times F_v \to  k $ by
 \begin{equation}\label{5.1}
   ( f_v , g_v ) = \textrm{Tr}_{k_v/k}( {\res}_v ( f_v g_v \omega ) ) .
   \end{equation}
   The residue   ${\res}_v ( f_v g_v \omega ) )$ can be computed as follows.
If we choose a local parameter $t_v$ at $v$, then the local differential $f_v g_v \omega$ can be written as
 \[  \sum_{i= -\infty}^{\infty}  c_i t_v^i dt_v,  \]
 and $ {\res}_v ( f g \omega ) ) = c_{-1} \in k_v $. It is clearly independent of the choice of $t_v$.

 The local pairings (\ref{5.1}) give a global one
 \begin{equation}\label{5.2}
    {\Bbb A}_C \times {\Bbb A}_C \to k  , \ \  ( ( f_v ) , (g_v ) ) = \sum_v ( f_v , g_v ).
    \end{equation}
 We  identify  the dual vector space of  $k(C)^n$ with itself by the dot product on  $k(C)^n$:
 \[  ( f_1 , \dots , f_n ) \cdot  ( g_1 , \dots , g_n ) = \sum f_i g_i . \]
Then a quadratic form $q$ on $k(C)^n$ induces a linear map $\rho: k(C)^n\to k(C)^n$ by
 \[  q ( f + g ) = q (f ) + q ( g ) +  ( f \rho ) \cdot g . \]
Suppose that $q$ is non-degenerate, so that $\rho $ is an isomorphism.
By scalar extension $q$ gives a non-degenerate quadratic from on $F_v^n$, which we again denote by $q$. Then it gives a non-degenerate quadratic character of the LAC(2) group $ F_v^n $ by
\[    h_{  v} ( f_v ) :=  \psi_v (   {\res}_v ( q(f_v )  \omega ) ) .\]
 We denote  the Weil index of  $h_{ v}$ by $\gamma_v ( q ) $, which depends on the choices
  of $\psi$ and $ \omega $.

\begin{theorem}\label{5.2}   $\gamma_v ( q ) = 1 $ for almost all closed points $v\in C$, and
\[  \prod_v \gamma_v ( q ) = 1.
\]
\end{theorem}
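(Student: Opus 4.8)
The plan is to deduce the identity from the vanishing of a single \emph{global} Weil index, in exact parallel with the way Weil's product formula (Theorem \ref{thm2.7}) follows from Corollary \ref{cor2.6}; the LCA(2) substitute for Corollary \ref{cor2.6} is Theorem \ref{thm5.1}. First I would assemble the local characters into one quadratic character on the LCA(2) group $G=\mathbb{A}_C^n$. Extending $q$ by scalars to $\mathbb{A}_C^n$, set
\[
h\big((f_v)_v\big)=\prod_v h_v(f_v)=\psi\Big(\sum_v \textrm{Tr}_{k_v/k}\,\textrm{Res}_v\big(q(f_v)\,\omega\big)\Big),
\]
the sum being finite because $\textrm{Res}_v(q(f_v)\omega)=0$ once $(f_v)$ lies in an adelic lattice and $q,\omega$ are integral outside a finite set of $v$. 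Using the continuity of $q$ on each subquotient $\mathbb{A}_C(D_2)^n/\mathbb{A}_C(D_1)^n$ one checks that $h$ satisfies (\ref{4.1}) and that its associated bicharacter (\ref{4.2}) is the local residue pairing (\ref{5.1}), summed over $v$, twisted by the symmetric map $\rho_q$ of $q$. Under the self-duality identification $(\mathbb{A}_C^n)^*\cong\mathbb{A}_C^n$ afforded by this pairing, the morphism $\rho$ attached to $h$ becomes the adelic extension of $\rho_q\in GL_n(k(C))$, which acts as an LCA(2) automorphism (as in Example 3.1.3) and hence is an isomorphism; so $h$ is non-degenerate, and $q(-x)=q(x)$ gives $h(-x)=h(x)$.

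Next I would apply Theorem \ref{thm5.1} to $H=k(C)^n\subset\mathbb{A}_C^n$, which is a closed subgroup in the LCA(2) sense. Its two hypotheses come from two properties of the curve. The residue theorem $\sum_v\textrm{Tr}_{k_v/k}\textrm{Res}_v(\eta)=0$, valid for every rational differential $\eta$ on $C$, applied to $\eta=q(f)\omega$ with $f\in k(C)^n$ gives $h(H)=1$; applied to $\eta=(f\cdot g)\omega$ with $f,g\in k(C)^n$ it gives the isotropy $k(C)^n\subset(k(C)^n)^\perp$ for the residue pairing. The reverse inclusion $(k(C)^n)^\perp=k(C)^n$ --- the self-duality of the adeles of $C$ --- identifies $H^\perp$ with $k(C)^n$, and since $\rho_q$ is a $k(C)$-automorphism we obtain $H^\perp\rho^{-1}=\rho_q^{-1}(k(C)^n)=k(C)^n=H$ together with $h(H^\perp\rho^{-1})=1$. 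Theorem \ref{thm5.1} then yields $\gamma(h)=1$.

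It remains to factor $\gamma(h)=\prod_v\gamma_v(q)$ and to see that almost every factor is trivial. Let $S$ be a finite set of closed points, enlarged so that for $v\notin S$ the differential $\omega$ is a unit at $v$, $q$ is represented by a matrix in $GL_n(\mathcal{O}_{k_v})$, and $\psi_v$ has conductor $\mathcal{O}_{k_v}$. For such $v$ the lattice $U_v=\widehat{\mathcal{O}}_v^n$ is self-dual for the local residue pairing, so $U_v^\perp\rho_v^{-1}=U_v$; moreover $q(f_v)\omega$ has vanishing residue for $f_v\in U_v$, so $h_v$ is trivial on $U_v$. Hence the subquotient $\bar U_v=U_v/U_v^\perp\rho_v^{-1}$ is zero and $\gamma_v(q)=\gamma(\bar h_v)=1$ by Definition \ref{def}. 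Writing $\mathbb{A}_C^n=M_S\times M^S$ with $M_S=\prod_{v\in S}F_v^n$ and $M^S={\prod}'_{v\notin S}F_v^n$, the character splits as $h=h_S\times h^S$; the restricted product $M^S$ carries the self-dual lattice ${\prod}'_{v\notin S}U_v$, on which $h^S$ is trivial, so $\gamma(h^S)=1$ directly from Definition \ref{def}. Because the Weil index is multiplicative under products of LCA(2) groups --- the defining subquotient $\bar U$ of a product being the product of the factors' subquotients, which reduces matters to the LCA multiplicativity used in Theorem \ref{thm2.7} --- we get $\gamma(h)=\gamma(h_S)\,\gamma(h^S)=\prod_{v\in S}\gamma_v(q)$. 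Combined with $\gamma(h)=1$ and $\gamma_v(q)=1$ for $v\notin S$, this reads $\prod_v\gamma_v(q)=1$.

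The main obstacle is the self-duality input $(k(C)^n)^\perp=k(C)^n$ for the residue pairing on the \emph{non-locally-compact} adele ring $\mathbb{A}_C$: the inclusion $k(C)^n\subset(k(C)^n)^\perp$ is only the residue theorem, but the reverse inclusion is a Serre-duality/completeness statement that must be carried out inside the category LCA(2), for instance through the finite-dimensionality of $\mathbb{A}_C/(k(C)+\mathbb{A}_C(D))$ over $k$ together with the perfectness of the local pairing on each two-dimensional local field $F_v$. A secondary and more routine point is checking, at the places $v\notin S$, that the integral lattice $U_v$ is genuinely self-dual and that $\rho_v$ preserves it, so that the decomposition $\mathbb{A}_C^n=M_S\times M^S$ makes the multiplicativity of $\gamma$ apply verbatim.
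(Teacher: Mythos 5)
Your proposal is correct and follows essentially the same route as the paper: the paper's (very terse) proof likewise applies Theorem \ref{thm5.1} to $H=k(C)^n$ inside the LCA(2) group $\mathbb{A}_C^n$, citing as input exactly the duality statement $(k(C))^\perp=k(C)$ for the residue pairing that you flag as the main obstacle, and leaves the remaining steps (almost-all local triviality via self-dual lattices, and the factorization $\gamma(h)=\prod_v\gamma_v(q)$ via the $M_S\times M^S$ splitting) implicit by analogy with the proof of Theorem \ref{thm2.7}. You have simply written out those implicit steps in full, which matches the intended argument.
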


This result follows from Theorem \ref{thm5.1} and the duality result that
$ (f_v ) \in {\Bbb A}_C$ is in $k( C) $ iff $ ( ( f_v ) , g ) ) = 0 $ for all $g \in  k( C) $,
 which implies that $\rho$ gives an isomorphism $k(C)\stackrel{\sim}{\rightarrow} k(C)^\perp$.
Applying the theorem for $n=1$ and the quadratic form on $ k (C)$ given by $q ( f ) =  \frac 1 2 f^2  $ (assuming that 2 is invertible in $k$),
 we have

\begin{cor}\label{cor5.3} Assume that $k$ is not of characteristic 2. Let $\omega $ be a nonzero meromorphic differential on $C$.
 For each closed point $ v \in C$, take a local parameter $t_v $ and expand $\omega $ at $v$ as
 \[  \sum_{i=0}^{\infty}  c_{i, v} t_v^{{\rm ord}_v \omega + i  } d t_v.      \]
Define $ \gamma_v ( \omega) = 1 $ if $ {\rm ord}_v \omega  $ is even, and
  $ \gamma_v ( \omega ) = \gamma (  \psi_v(\frac{1}{2}c_{0,v}x^2) )  $ otherwise (here $\gamma$ is the usual Weil index on the local
field $k_v$). Then $\gamma_v(\omega)=1$ for almost all $v$ and
  \[ \prod_v \gamma_v ( \omega ) =  1.\]
 \end{cor}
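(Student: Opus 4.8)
The plan is to apply Theorem~\ref{5.2} with $n=1$ to the quadratic form $q(f)=\frac12 f^2$ on $k(C)$, which is non-degenerate because $2$ is invertible in $k$. This gives at once $\prod_v\gamma_v(q)=1$, where $\gamma_v(q)$ is the Weil index of the quadratic character $h_v(f)=\psi_v(\res_v(\frac12 f^2\omega))$ on the LCA(2) group $F_v=k_v((t_v))$. It therefore suffices to establish the purely local identity $\gamma_v(q)=\gamma_v(\omega)$ at every closed point $v$. The assertion that $\gamma_v(\omega)=1$ for almost all $v$ is then automatic: since $\omega$ is a nonzero meromorphic differential on the projective curve $C$, one has ${\rm ord}_v\omega=0$ for all but finitely many $v$, and by definition $\gamma_v(\omega)=1$ whenever ${\rm ord}_v\omega$ is even.

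For the local identity, fix $v$, write $t=t_v$, $\mathcal{O}=k_v[[t]]$ and $d={\rm ord}_v\omega$, so that $\omega=u\,t^d\,dt$ with $u=\sum_{i\ge0}c_{i,v}t^i$ a unit and $c_{0,v}\ne0$. The bicharacter attached to $h_v$ is $\la f,g\ra=\psi_v(\res_v(fg\omega))$; because $\psi_v$ is a nontrivial character of $k_v$ and the only $k_v$-subspaces of $k_v$ are $0$ and $k_v$, the $\la\,,\ra$-orthogonal of a $k_v$-subspace coincides with its orthogonal for the $k_v$-bilinear residue pairing $(f,g)\mapsto\res_v(fg\omega)$. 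A direct computation of the coefficient of $t^{-1}$ then yields $(t^m\mathcal{O})^\perp\rho^{-1}=t^{-m-d}\mathcal{O}$ for every $m\in\Bbb Z$. I would then choose $U$ according to the parity of $d$.

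If $d=2e$, take $U=t^{-e}\mathcal{O}$; then $U^\perp\rho^{-1}=U$, and for $f\in U$ the form $\frac12 f^2\omega$ is a holomorphic multiple of $dt$, so $\res_v(\frac12 f^2\omega)=0$ and $h_v|_U=1$. Thus $U$ meets both conditions of Lemma~\ref{lemma4.1}, $\bar U=U/U^\perp\rho^{-1}=0$, and $\gamma_v(q)=1=\gamma_v(\omega)$ (alternatively this is a direct instance of Theorem~\ref{thm5.1}). If $d=2e+1$, take $U=t^{-e-1}\mathcal{O}$; then $U^\perp\rho^{-1}=t^{-e}\mathcal{O}\subset U$, the same residue estimate shows $h_v|_{U^\perp\rho^{-1}}=1$, and $\bar U$ is the one-dimensional $k_v$-space spanned by the class of $t^{-e-1}$. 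For a lift $\xi t^{-e-1}$ with $\xi\in k_v$, one computes $\res_v(\frac12\xi^2 t^{-2e-2}\omega)=\frac12 c_{0,v}\xi^2$, so the induced character is $\bar h(\xi)=\psi_v(\frac12 c_{0,v}\xi^2)$. Since for a finite-dimensional LCA group the Weil index of Definition~\ref{def} recovers the classical one (Theorem~\ref{windex}), we obtain $\gamma_v(q)=\gamma(\bar h)=\gamma(\psi_v(\frac12 c_{0,v}x^2))=\gamma_v(\omega)$. Combining the two cases with $\prod_v\gamma_v(q)=1$ proves the corollary; this identification moreover shows $\gamma_v(\omega)$ to be independent of the chosen parameter $t_v$, since $\gamma_v(q)$ is intrinsic.

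The main obstacle I anticipate is the local residue bookkeeping: pinning down $(t^m\mathcal{O})^\perp\rho^{-1}$ exactly, verifying that the chosen $U$ satisfies conditions (1) and (2) of Lemma~\ref{lemma4.1} (above all the triviality of $h_v$ on $U^\perp\rho^{-1}$), and checking that the reduced character on the one-dimensional quotient is precisely $\xi\mapsto\psi_v(\frac12 c_{0,v}\xi^2)$. Everything else is formal, and Theorem~\ref{5.2} then closes the argument.
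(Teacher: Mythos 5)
Your proposal is correct and takes essentially the same route as the paper: both reduce to Theorem \ref{5.2} applied to $q(f)=\frac12 f^2$ and then identify $\gamma_v(q)$ with $\gamma_v(\omega)$ by the same parity analysis of ${\rm ord}_v\omega$, choosing $U=t_v^{-e}\widehat{\mathcal O}_v$ (resp.\ $t_v^{-e-1}\widehat{\mathcal O}_v$) so that the quotient is $0$ (resp.\ one-dimensional with induced character $\psi_v(\frac12 c_{0,v}\xi^2)$). Your only deviations are cosmetic: you invoke Lemma \ref{lemma4.1} and Definition \ref{def} where the paper cites Theorem \ref{thm5.1} in the even case, and you deduce parameter-independence of $\gamma_v(\omega)$ from the intrinsic nature of $\gamma_v(q)$ instead of the paper's direct check that $c_{0,v}'/c_{0,v}$ is a square.
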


\begin{proof}
Take $q(f)=\frac{1}{2}f^2$. If ord$_v\omega=2d$ is even, then $h_v$ induces an isomorphism $\rho_v: t_v^{-d}\widehat{O}_v
\stackrel{\sim}{\rightarrow}  (t_v^{-d}\widehat{O}_v)^\perp$, which implies that $\gamma_v(q)=1$ by Theorem \ref{thm5.1}.
If ord$_v\omega=2d+1$ is odd, then $(t_v^{-d-1}\widehat{O}_v)^\perp\rho_v^{-1}=t_v^{-d}\widehat{O}_v\subset t_v^{-d-1}\widehat{O}_v $, hence $h_v$ induces a quadratic character
\[
\bar{h}_v(x)=\psi_v(\frac{1}{2}c_{0,v}x^2)
\]
on the quotient
\[
t_v^{-d-1}\widehat{O}_v/t_v^{-d}\widehat{O}_v\stackrel{\sim}{\rightarrow} k_v,\quad xt_v^{-d-1}+t_v^{-d}\widehat{O}_v\mapsto x,\quad x\in k_v.
\] Note that $\gamma (  \psi_v(\frac{1}{2}c_{0,v}x^2) )$ does not depend on the choice of  $t_v$. In fact if   $c_{0,v}'$ is the coefficient in the expansion of $\omega$ using another local parameter $t_v'$, then $c_{0,v}'/c_{0,v}$ is a square. We see that  $\gamma_v(\omega)$ defined in the corollary is equal to the local Weil index $\gamma_v(q)$, and the corollary follows from Theorem \ref{5.2}.
\end{proof}


\subsection{Product formula on an arithmetic surface}

We first recall the setting and main result of \cite{L}, where more details can be found. Let $\mathcal O_K$ be a Dedekind domain of characteristic zero with finite residue fields. Let $X$ be a two-dimensional, normal scheme, flat and projective over $S=\textrm{Spec }\mathcal O_K$, whose generic fibre is one-dimensional and irreducible. We consider a semi-local situation, namely we fix a closed point $x\in X$ lying over a finite place $s$ of $\mathcal O_K$, and take various formal curves $y$ containing $x$. Let $K(X)_x=\textrm{Frac~}\widehat{\mathcal O}_{X, x}$ and $K_s=\textrm {Frac }\widehat{\mathcal O}_{K,s}$. We may define the adele ring ${\Bbb A}_{X,x}$ at $x$ and the adelic space of continuous relative differential forms $\Omega^{\rm cts}_{{\Bbb A}_{X,x}/K_s}$ as certain restricted products, between which there is a canonical residue pairing
 \[
 {\Bbb A}_{X,x}\times \Omega^{\rm cts}_{{\Bbb A}_{X,x}/K_s}\to K_s, \quad (f, \omega)\mapsto \textrm{Res}_x(f\omega):=\sum_{y\subset^f X, y\ni x}\textrm{Res}_{x,y}(f_y\omega_y)
 \]
 where the last sum is taken over all formal curves $y\subset^f X$ containing $x$. The main result of \cite{L}  states that,
 $K(X)_x$ and $ \Omega^{\rm cts}_{K(X)_x/K_s}$ are exactly the mutual annihilators under the residue paring at $x$. We refer the readers to \cite{L} for the precise definitions of some notions above.

The adele ring $\Bbb A_{X,x}$ can be filtered by formal divisors $D=\sum_{y\subset^f X, y\ni x}m_y[y]$ in a similar way as in Section \ref{ss6.1}, which gives an LCA(2) structure on
$\Bbb A_{X,x}$.  Take a non-degenerate quadratic form $q$ of $K(X)_x^n$ and a non-trivial additive character $\psi$ of $K_s$. Let $\omega\in \Omega^{\rm cts}_{K(X)_x/K_s}$ be a nonzero rational continuous differential form. Define the global quadratic character
\[
h(f)=\psi(\textrm{Res}_x(q(f)\omega))
\]
of ${\Bbb A}_{X,x}^n$, and the local quadratic character
\[
h_y(f)=\psi(\textrm{Res}_{x,y}(q(f)\omega))
\]
of $K^n_{x,y}$, where $K_{x,y}$ is the two-dimensional local field associated to the datum $x\in y\subset^f X$. Let $\gamma_y(q)$ be the Weil index of $h_y$, which depends on $\psi$ and $\omega$. The main result of \cite{L} implies that $h$ induces an isomorphism
$\rho: K(X)_x^n\stackrel{\sim}{\rightarrow}  (K(X)_x^n)^\perp$. The explicit description of the residue map Res$_{x,y}$ implies that $h_y$ induces an isomorphism
$\rho_y: \cal O_{x,y}^n\stackrel{\sim}{\rightarrow}  (\cal O_{x,y})^\perp$ for almost all  formal curves $y$ containing $x$, where $\cal O_{x,y}=\cal O_{K_{x,y}}$. By Theorem
\ref{thm5.1}, we obtain

\begin{theorem}\label{thm6.4}
$\gamma_y(q)=1$ for almost all formal curves $y$ containing $x$, and
\[
\prod_{y\subset^f X,y\ni x}\gamma_y(q)=1.
\]

\end{theorem}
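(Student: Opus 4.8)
The plan is to follow the template of the classical product formula (Theorem \ref{thm2.7}) and its curve analogue (Theorem \ref{5.2}), with the residue self-duality of \cite{L} playing the role of the classical self-duality of a global field. The argument has three parts: a global vanishing $\gamma(h)=1$, an almost-everywhere local vanishing $\gamma_y(q)=1$, and a multiplicativity statement $\gamma(h)=\prod_y\gamma_y(q)$ linking them.

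\emph{Global vanishing.} I would apply Theorem \ref{thm5.1} to $G={\Bbb A}_{X,x}^n$ with the closed subgroup $H=K(X)_x^n$. Since $q(-f)=q(f)$ we have $h(-f)=h(f)$, so the hypothesis on $h$ holds. The isomorphism $\rho\colon K(X)_x^n\stackrel{\sim}{\rightarrow}(K(X)_x^n)^\perp$ furnished by \cite{L} gives exactly $H^\perp\rho^{-1}=H$. For the vanishing $h(H)=1$ I would observe that for rational $f\in K(X)_x^n$ the scalar $q(f)$ lies in $K(X)_x$ and the differential $q(f)\omega$ is a rational continuous form; by the annihilator property of \cite{L} a rational function pairs trivially with a rational differential, so $\textrm{Res}_x(q(f)\omega)=0$ and $h(f)=\psi(0)=1$. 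Theorem \ref{thm5.1} then yields $\gamma(h)=1$.

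\emph{Local vanishing.} I would fix a finite set $S$ of formal curves $y\ni x$ outside of which the datum is integral: for $y\notin S$ the matrix of $q$ lies in $GL_n(\mathcal{O}_{x,y})$, the form $\omega$ has no pole along $y$, and $\psi$ is trivial on the image $\textrm{Res}_{x,y}(\mathcal{O}_{x,y}\omega)$. Enlarging $S$ finitely, I may also assume the isomorphism $\rho_y\colon\mathcal{O}_{x,y}^n\stackrel{\sim}{\rightarrow}(\mathcal{O}_{x,y}^n)^\perp$ of the excerpt holds for $y\notin S$. Then the lattice $H_y=\mathcal{O}_{x,y}^n$ in the two-dimensional local LCA(2) group $K_{x,y}^n$ satisfies $H_y^\perp\rho_y^{-1}=H_y$ and $h_y(H_y)=1$, so Theorem \ref{thm5.1} gives $\gamma_y(q)=1$ for all $y\notin S$. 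This proves the almost-everywhere assertion.

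\emph{Multiplicativity and conclusion.} Writing ${\Bbb A}_{X,x}^n=\prod'_y K_{x,y}^n$ and $h=\prod_y h_y$, I would choose a subgroup $U=\prod_y U_y\in\mathcal{U}({\Bbb A}_{X,x}^n)$ adapted to this decomposition, taking $U_y=\mathcal{O}_{x,y}^n$ for $y\notin S$ and, using Lemma \ref{lemma4.1}, suitable $U_y$ for the finitely many $y\in S$. Then $U^\perp\rho^{-1}=\prod_y U_y^\perp\rho_y^{-1}$ with $U_y^\perp\rho_y^{-1}=U_y$ for $y\notin S$, so the reduction $\bar U=U/U^\perp\rho^{-1}$ of Definition \ref{def} is a \emph{finite} product $\prod_{y\in S}\bar U_y$ of LCA groups and $\bar h=\prod_{y\in S}\bar h_y$. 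The multiplicativity $\gamma(q_1\times q_2)=\gamma(q_1)\gamma(q_2)$ for LCA groups then gives $\gamma(h)=\gamma(\bar h)=\prod_{y\in S}\gamma_y(q)$, which equals $\prod_y\gamma_y(q)$ by the local vanishing. Combining with $\gamma(h)=1$ yields the product formula. I expect the main obstacle to be precisely this last step: one must verify that the filtration of ${\Bbb A}_{X,x}$ by formal divisors provides a $U$ that genuinely splits as $\prod_y U_y$, with both the orthogonal complement and the map $\rho$ respecting the splitting, so that the LCA(2) Weil index of $h$ collapses to a finite product of LCA Weil indices --- the LCA(2) counterpart of the splitting $M_{\Bbb A}=M_S\times M^S$ used in the proof of Theorem \ref{thm2.7}.
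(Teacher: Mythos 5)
Your proposal is correct and follows essentially the same route as the paper: the paper deduces Theorem \ref{thm6.4} from Theorem \ref{thm5.1} applied with $H=K(X)_x^n$ via the self-duality $\rho\colon K(X)_x^n\stackrel{\sim}{\rightarrow}(K(X)_x^n)^\perp$ of \cite{L}, together with the almost-everywhere local self-duality $\rho_y\colon \mathcal{O}_{x,y}^n\stackrel{\sim}{\rightarrow}(\mathcal{O}_{x,y}^n)^\perp$. Your third step merely makes explicit the multiplicativity $\gamma(h)=\prod_y\gamma_y(q)$ that the paper leaves implicit (as in its proof of Theorem \ref{thm2.7}), and your reduction of $\bar U=U/U^\perp\rho^{-1}$ to a finite product over $y\in S$ is exactly the intended justification.
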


Let us give an example. Let $X=\Bbb P^1_{\cal O_K}\supset \Bbb A^1_{\cal O_K}=\textrm{Spec }\cal O_K[t]$ and $x=(\frak{p}, t)$, where  $\frak{p}$ is a prime ideal of $\cal O_K$ with residue field $k_\frak{p}$. In other words, $x$ is the zero point of the fibre over $\frak{p}$. We have $\widehat{\cal O}_{X,x}=\cal O_\frak{p}[[t]]$, where $\cal O_\frak{p}$ is the ring of integers of the local field $K_\frak{p}$. By abuse of notation, we also denote by $y$ the height one prime ideal of $\cal O_\frak{p}[[t]]$ corresponding to a formal curve $y$ containing $x$. By Weierstrass' preparation theorem, $y$ is generated by either $\frak{p}$ or an irreducible distinguished polynomial (i.e., of the form $P(t)=t^l+a_1t^{l-1}+\cdots +a_l$ with $a_i\in\frak{p}$). We denote $y$ by $y_\frak{p}$ or $y_P$ for these two cases respectively. Then
\begin{align*}
& K_{x,y_\frak{p}}=K_\frak{p}\{\{t\}\}:=\left\{\sum_{i\in\Bbb Z}a_i t^i\ |\  a_i\textrm{ is bounded and }\lim_{i\to \infty}a_{-i}=0\right\},\\
& K_{x,y_P}=K_P((t_P)),
\end{align*}
where $K_P:=\cal O_K[t]/(P(t))$ and $t_P$ is the local parameter of $y_P$ given by $P(t)$.

Recall that (see e.g. \cite{M} and \cite{L}) the local continuous relative differential forms and the residue maps are given by
\begin{align*}
&\textrm{Res}_{x, y_\frak{p}}: \Omega^{\rm cts}_{K_{x, y_\frak{p}}/K_\frak{p}}=K_{x, y_\frak{p}}dt\to K_\frak{p},\quad \sum_i a_i t^idt\mapsto -a_{-1},\\
&\textrm{Res}_{x, y_P}: \Omega^{\rm cts}_{K_{x, y_P}/K_\frak{p}}=K_{x, y_P}dt_P\to K_\frak{p},\quad \sum_i a_i t_P^idt_P\mapsto \textrm{Tr}_{K_P/K} a_{-1},
\end{align*}
which are independent of the choice of local parameters.

Let $\psi_P$ be the character of $K_P$ given by
\[
\psi_P(f)=\psi(\textrm{Tr}_{K_P/K_\frak{p}}f).
\]
Assume that $\psi$ has conductor $\frak{p}^{c_\psi}$, $c_\psi\in\Bbb Z$, and choose a local parameter $\varpi_\frak{p}$ of $\frak{p}$.  Then $\psi$ induces an additive character
\[
\psi_0(a):=\psi(\tilde{a})
\]
of $k_\frak{p}$,
where $\tilde{a}$ is any lift of $a$ in $\frak{p}^{c_\psi-1}$ under the map  $
\frak{p}^{c_\psi-1}\to k_\frak{p}$, $\tilde{a}\mapsto \varpi_\frak{p}^{1-c_\psi}\tilde{a}\textrm{ mod }\frak{p}$.
Let $\psi_\frak{p}$ be the character of $k_\frak{p}((t))$ given by
\[
\psi_\frak{p}(\sum_i a_i t^i)=\psi_0(-a_{-1}).
\]
Define a map
\[
K_\frak{p}\{\{t\}\}^\times \to k_\frak{p}((t))^\times,\quad f\mapsto \bar{f}:=\varpi_\frak{p}^{-\textrm{ord}_{y_\frak{p}}f} f \textrm{ mod } y_\frak{p},
\]
and put $\bar{0}=0$. Then obviously $\overline{fg}=\bar{f}\cdot\bar{g}$.

Applying Theorem \ref{thm6.4} for $n=1$ and $q(f)=\frac{1}{2}f^2$, we obtain the following

\begin{cor}\label{cor6.5}
Assume that $k_\frak{p}$ has odd characteristic. Let $\omega\in\Omega^{\rm cts}_{F_\frak{p}/K_\frak{p}}$ be a nonzero continuous relative differential, where $F_\frak{p}={\rm Frac}~{\cal O}_\frak{p}[[t]]$. Assume that $\omega$ has the expansion
\[
\sum^\infty_{i=0}c_{i,P}t_P^{{\rm ord}_{y_P}\omega+i}dt_P
\]
at $y_P$, and that $\omega=c_\frak{p}(t)dt$ at $y_\frak{p}$, where $c_\frak{p}\in K_\frak{p}\{\{t\}\}$. Write ${\rm ord}_{y_\frak{p}}\omega:={\rm ord}_{y_\frak{p}}c_\frak{p}$. Define
\[
\gamma_P(\omega)=\left\{\begin{array}{ll} 1 & \textrm{if }{\rm ord}_{y_P}\omega\textrm { is even}\\
\gamma(\psi_P(\frac{1}{2}c_{0,P}a^2)) & \textrm{otherwise,}\end{array}\right.
\]
and
\[
\gamma_\frak{p}(\omega)=\left\{\begin{array}{ll} 1 & \textrm{if }{\rm ord}_{y_\frak{p}}\omega-c_\psi\textrm { is even}\\
\gamma(\psi_\frak{p}(\frac{1}{2}\bar{c}_\frak{p}a^2)) & \textrm{otherwise.}\end{array}\right.
\]
Then $\gamma_P(\omega)=1$ for almost all $P$ and
\[
\gamma_\frak{p}(\omega)\prod_P\gamma_P(\omega)=1,
\]
where the product is taken over all irreducible distinguished polynomials in $\cal O_\frak{p}[t]$.
\end{cor}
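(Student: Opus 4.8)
The plan is to apply Theorem~\ref{thm6.4} with $n=1$ and $q(f)=\tfrac12 f^2$ and then to make each local factor explicit. Since the formal curves through $x$ are precisely $y_\frak{p}$ together with the $y_P$, the product formula of Theorem~\ref{thm6.4} reads $\gamma_{y_\frak{p}}(q)\prod_P\gamma_{y_P}(q)=1$, and $\gamma_{y_P}(q)=1$ for almost all $P$. It therefore suffices to establish the two local identities $\gamma_{y_P}(q)=\gamma_P(\omega)$ and $\gamma_{y_\frak{p}}(q)=\gamma_\frak{p}(\omega)$; substituting them into the product formula then gives the corollary, the ``almost all'' clause being inherited from Theorem~\ref{thm6.4}.

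For the curves $y_P$ the field $K_{x,y_P}=K_P((t_P))$ is of equal characteristic over the local field $K_P$, and $\textrm{Res}_{x,y_P}$ lands in $K_P$ with the character $\psi_P=\psi\circ\textrm{Tr}_{K_P/K_\frak{p}}$. The argument is then word for word that of Corollary~\ref{cor5.3} with $(k,\psi_v,t_v)$ replaced by $(K_P,\psi_P,t_P)$: taking the levels $U=t_P^a K_P[[t_P]]$, one gets a self-dual level when $\textrm{ord}_{y_P}\omega$ is even, so $\gamma_{y_P}(q)=1$ by Theorem~\ref{thm5.1}; and when $\textrm{ord}_{y_P}\omega$ is odd the quotient $U/U^\perp\rho^{-1}$ is a single copy of $K_P$ carrying $a\mapsto\psi_P(\tfrac12 c_{0,P}a^2)$, whence $\gamma_{y_P}(q)=\gamma(\psi_P(\tfrac12 c_{0,P}a^2))=\gamma_P(\omega)$. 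Independence of the parameter $t_P$ is seen exactly as in Corollary~\ref{cor5.3}, since a change of parameter multiplies $c_{0,P}$ by a square.

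The substantive case is $y_\frak{p}$, where $F=K_{x,y_\frak{p}}=K_\frak{p}\{\{t\}\}$ is of mixed characteristic. I would work with the rank-one valuation $v=\textrm{ord}_{y_\frak{p}}$ (uniformizer $\varpi_\frak{p}$, residue field $k_\frak{p}((t))$) and the levels $U=\varpi_\frak{p}^a\mathcal{O}_F$, where $\mathcal{O}_F=\{f:v(f)\ge 0\}$ has residue field $k_\frak{p}((t))$. Writing $e=\textrm{ord}_{y_\frak{p}}\omega$ and $\omega=c_\frak{p}\,dt$, for $g\in U$ the residue $\textrm{Res}_{x,y_\frak{p}}(fg\omega)$ lies in $\frak{p}^{\,v(f)+a+e}$; since $\psi$ has conductor $\frak{p}^{c_\psi}$, a direct computation gives the dual level $U^\perp\rho^{-1}=\varpi_\frak{p}^{\,c_\psi-a-e}\mathcal{O}_F$. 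When $\textrm{ord}_{y_\frak{p}}\omega-c_\psi$ is even, the choice $a=(c_\psi-e)/2$ makes $U^\perp\rho^{-1}=U$ with $h(U)=1$ (as $\tfrac12 f^2\omega$ then has residue in $\frak{p}^{c_\psi}$), so Theorem~\ref{thm5.1} yields $\gamma_{y_\frak{p}}(q)=1=\gamma_\frak{p}(\omega)$. When $\textrm{ord}_{y_\frak{p}}\omega-c_\psi$ is odd, the choice $a=(c_\psi-e-1)/2$ gives $U^\perp\rho^{-1}=\varpi_\frak{p}^{\,a+1}\mathcal{O}_F$, so $\bar{U}=U/U^\perp\rho^{-1}\cong k_\frak{p}((t))$ via $\varpi_\frak{p}^a u\mapsto\bar{u}$.

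The hard part will be to track the induced character $\bar{h}$ on $\bar{U}$ and identify it with $\psi_\frak{p}(\tfrac12\bar{c}_\frak{p}a^2)$. The key is the numerical identity $2a+e=c_\psi-1$, which forces $\tfrac12\varpi_\frak{p}^{2a}u^2\omega$ to have residue in $\frak{p}^{c_\psi-1}$, so that $\psi$ evaluated on it factors through the induced additive character $\psi_0$ of $k_\frak{p}$. After checking that reduction modulo $\varpi_\frak{p}$ commutes with extraction of the $t^{-1}$-coefficient, this gives $\bar{h}(\bar{u})=\psi_0\!\left(-\tfrac12\,[\,t^{-1}\text{-coefficient of }\bar{u}^2\bar{c}_\frak{p}\,]\right)=\psi_\frak{p}(\tfrac12\bar{c}_\frak{p}\bar{u}^2)$, hence $\gamma_{y_\frak{p}}(q)=\gamma(\psi_\frak{p}(\tfrac12\bar{c}_\frak{p}a^2))=\gamma_\frak{p}(\omega)$; independence of $t$ follows since a change of parameter scales $\bar{c}_\frak{p}$ by a square. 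The main obstacle is thus entirely in this conductor-twisted bookkeeping in the mixed-characteristic field, the equal-characteristic factors $y_P$ being a routine transcription of Corollary~\ref{cor5.3}.
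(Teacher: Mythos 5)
Your proposal is correct and follows essentially the same route as the paper: reduce via Theorem \ref{thm6.4} (with $n=1$, $q(f)=\tfrac12 f^2$) to the local identities $\gamma_{y_P}(q)=\gamma_P(\omega)$ and $\gamma_{y_\frak{p}}(q)=\gamma_\frak{p}(\omega)$, handle the equal-characteristic curves $y_P$ by transcribing Corollary \ref{cor5.3}, and treat $y_\frak{p}$ by computing the dual level of $\varpi_\frak{p}^{a}\mathcal{O}_\frak{p}\{\{t\}\}$ against the conductor $\frak{p}^{c_\psi}$ --- your choices $a=(c_\psi-e)/2$ and $a=(c_\psi-e-1)/2$ are exactly the paper's $\frak{p}^{-d}\mathcal{O}_\frak{p}\{\{t\}\}$ and $\frak{p}^{-d-1}\mathcal{O}_\frak{p}\{\{t\}\}$ with $e=c_\psi+2d$ or $c_\psi+2d+1$, and your identification of the induced character on the quotient $k_\frak{p}((t))$ as $\psi_\frak{p}(\tfrac12\bar{c}_\frak{p}a^2)$, together with the square-ratio argument for parameter independence, matches the paper's proof.
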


\begin{proof}
By Theorem \ref{thm6.4}, it suffices to show that $\gamma_P(\omega)=\gamma_{y_P}(q)$ and $\gamma_\frak{p}(\omega)=\gamma_{y_\frak{p}}(q)$, where $q(f)=\frac{1}{2}f^2$. The proof of the first equation is similar to that of Corollary \ref{cor5.3}, and we shall only prove the second one. If ord$_{y_\frak{p}}\omega=c_\psi+2d$, then $h_{y_\frak{p}}$ induces an isomorphism $\rho_{y_\frak{p}}: \frak{p}^{-d}\cal O_\frak{p}\{\{t\}\}\stackrel{\sim}{\rightarrow} (\frak{p}^{-d}\cal O_\frak{p}\{\{t\}\})^\perp$, hence $\gamma_{y_\frak{p}}(q)=1$ by Theorem \ref{thm5.1}. If ${\rm ord}_{y_\frak{p}}\omega=c_\psi+2d+1$, then
$(\frak{p}^{-d-1}\cal O_\frak{p}\{\{t\}\})^\perp\rho_{y_\frak{p}}^{-1}=\frak{p}^{-d}\cal O_\frak{p}\{\{t\}\}\subset \frak{p}^{-d-1}\cal O_\frak{p}\{\{t\}\}$ and $h_{y_\frak{p}}$ induces a quadratic character
\[
\bar{h}_{y_\frak{p}}(a)=\psi_\frak{p}(\frac{1}{2}\bar{c}_\frak{p}a^2)
\]
on the quotient
\[
\frak{p}^{-d-1}\cal O_\frak{p}\{\{t\}\}/ \frak{p}^{-d}\cal O_\frak{p}\{\{t\}\}\stackrel{\sim}{\rightarrow} k_\frak{p}((t)),\quad f+\frak{p}^{-d}\cal O_\frak{p}\{\{t\}\}\mapsto \varpi_\frak{p}^{d+1}f \textrm{ mod } y_\frak{p}.
\]
This proves that $\gamma_\frak{p}(\omega)$ defined in the corollary also coincides with the local Weil index $\gamma_{y_\frak{p}}(q)$. From the general theory of Weil index for LCA(2), we know a priori that $\gamma_P(\omega)$ and $\gamma_\frak{p}(\omega)$ are independent of the choice of local parameters $t_y$, $y=y_P$ or $y_\frak{p}$,  and the choice of $\varpi_\frak{p}$ as well. This can be also verified directly by showing that $c_{0,P}'/c_{0,P}$ and $\bar{c}_\frak{p}'/\bar{c}_\frak{p}$ are both squares, if $c_{0,P}'$ and $\bar{c}_\frak{p}'$ are the resulting elements upon choosing different local parameters $t_y'$ and $\varpi_\frak{p}'$.
\end{proof}

\end{document}